\documentclass{article}
\usepackage{amsmath}
\usepackage{amsthm}
\usepackage{cite}
\usepackage{amscd}
\usepackage{authblk}
\usepackage[all]{xy}
\theoremstyle{definition}
\newtheorem{thm}{Theorem}[section]
\newtheorem{definition}[thm]{Definition}
\newtheorem{prop}[thm]{Proposition}
\newtheorem{lem}[thm]{Lemma}
\newtheorem{rem}[thm]{Remark}
\newtheorem{cor}[thm]{Corollary}
\newtheorem{ex}[thm]{Example}

\newtheorem{ques}[thm]{Question}
\newtheorem*{ack}{Acknowledgement}
\newtheorem*{nota}{Notation}
\numberwithin{equation}{section}
\usepackage{amssymb}
\newcommand{\Z}{\mathbb{Z}}
\newcommand{\Q}{\mathbb{Q}}
\newcommand{\Spec}{\operatorname{Spec}}

\newcommand{\Hom}{{\rm Hom}}

\newcommand{\Rep}{{\rm Rep}_k}
\newcommand{\Vect}{{\rm Vec}_k}

\newcommand{\Dim}{{\rm dim}}

\newcommand{\GL}{{\rm GL}}
\newcommand{\SL}{{\rm SL}}

\newcommand{\Ker}{{\rm Ker}}

\newcommand{\calO}{\mathcal{O}}

\newcommand{\Def}{\overset{{\rm def}}{=}}
\newcommand{\ab}{{\rm ab}}
\newcommand{\et}{\text{{\rm \'et}}}
\newcommand{\lr}{{\rm lin.red}}
\newcommand{\loc}{{\rm loc}}
\newcommand{\fpqc}{{\rm fpqc}}
\newcommand{\Diag}{{\rm Diag}}
\newcommand{\X}{\mathbb{X}}
\newcommand{\G}{\mathbb{G}}
\newcommand{\F}{\mathbb{F}}
\newcommand{\A}{\mathbb{A}}
\newcommand{\C}{\mathbb{C}}
\newcommand{\Str}{{\rm Strat}}
\newcommand{\str}{\text{$1$-strat}}
\newcommand{\unit}{\mathbb{I}}

\title{On a purely inseparable analogue of the Abhyankar Conjecture for affine curves}
\author{Shusuke Otabe
\footnote{Mathematical Institute, Graduate School of Science, Tohoku University, 6-3 Aramakiaza, Aoba, Sendai, Miyagi 980-8578, Japan;
E-mail: {shusuke.otabe.r1@dc.tohoku.ac.jp}
}}
\affil{Tohoku University}
\date{\vspace{-12mm}}

\begin{document}

\maketitle

\begin{abstract}
Let $U$ be an affine smooth curve defined over an algebraically closed field of positive characteristic. The Abhyankar Conjecture (proved by Raynaud and Harbater in 1994) describes the set of  finite quotients of Grothendieck's \'etale fundamental group $\pi_1^{\text{\'et}}(U)$. In this paper, we consider a purely inseparable analogue of this problem, formulated in terms of Nori's profinite fundamental group scheme $\pi^N(U)$, and give a partial answer to it.
\end{abstract}

\thispagestyle{empty}

\section{Introduction}

\subsection{Nori's fundamental group scheme}
Let $X$ be an algebraic variety over a field $k$. In \cite{gr71}, Grothendieck defined the \textit{\'etale fundamental group} $\pi_1^{\et}(X)$ as a generalization of the fundamental group of a topological space. It is the smallest profinite group classifying $G$-coverings over $X$, where $G$ is a finite group. In the case where $k$ is an  algebraically closed field of characteristic zero with $k\hookrightarrow\C$, it is known that $\pi_1^{\et}(X)$ is isomorphic to the profinite completion of the topological one $\pi_1^{{\rm top}}(X(\C))$. For example, as the complex line $\A^1(\C)$ is simply connected, we have $\pi_1^{\et}(\A_{\C}^1)=0$. On the other hand, in the case where $k$ is of positive characteristic $p>0$, the situation is quite different. This can be seen even in the case where $X=\A^1_{k}$ is the affine line. Indeed, it is known that  $\Dim_{\F_p}\Hom(\pi^{\et}_1(\A_k^1),\F_p)=\infty$. On the other hand, if $X=U$ is a smooth affine curve, the \textit{Abhyankar Conjecture}~\cite{ab57}, proved by Raynaud and Harbater~\cite{ha94}\cite{ra94}, gives us  another estimate of the difference between $\pi^{\et}_1(U)$ and the topological one of a Riemann surface of the same type. The conjecture describes the set
\begin{equation}\label{eq:et_A}
\pi_A^{\et}(U)\Def\{\text{finite quotients of  $\pi^{\et}_1(U)$}\}
\end{equation} 
for any smooth affine curve $U$. Here, more precisely, $\pi^{\et}_A(U)$ is the set of isomorphism classes of fintie groups which appears as a finite quotient of $\pi_1^{\et}(U)$. For example, it says for any integer $n>0$ and $r>0$, there exists a surjective homomorphism $\pi^{\et}_1(\A^1_k)\twoheadrightarrow\SL_n(\F_{p^r})$~
(cf.~\cite{ka85}\cite{no94}\cite{se92}.)

In \cite{no76}\cite{no82}, Nori defined the \textit{fundamental group scheme} $\pi^N(X)$ as a generalization of Grothendieck's \'etale geometric fundamental group of an algebraic variety $X$ over a field $k$~(cf.~\cite{gr71}). It is a profinite $k$-group scheme classifying $G$-torsors over $X$ with $G$ a finite $k$-group scheme. In \cite{no76}, he first constructed it under the assumption that $X$ is proper over $k$ by using the theory of Tannakian categories. On the other hand, in \cite{no82}, he also proved that such a profinite one $\pi^N(X)$ exists also for arbitrary (not necessarily proper) $X$ without relying on a Tannakian construction~(cf.~Section \ref{sec:def fund grp sch}). 
Note that, if $k$ is an algebraically closed field, then Grothendieck's \'etale fundamental group $\pi_1^{\et}(X)$ classifies all finite \'etale torsors over $X$ and is universal for this property. Hence, there exists a homomorphism of $\pi^N(X)$ into the pro-constant $k$-group scheme, denoted by $\pi^{\et}(X)$, associated with $\pi_1^{\et}(X)$. In fact $\pi^{\et}(X)$ gives the maximal pro-\'etale quotient of $\pi^N(X)$, called the \textit{\'etale fundamental group scheme}. In the case where $k$ is of characteristic zero, then the surjective homomorphism $\pi^N(X)\twoheadrightarrow\pi^{\et}(X)$ is in fact an isomorphism. This is valid because under the assumption that $k$ is an algebraically closed field of characteristic zero, any fintie $G$-torsor $P\to X$ with $G$ a finite $k$-group scheme $G$ is nothing but a $G(k)$-covering over $X$. On the other hand, in the case where $k$ is of positive characteristic $p>0$, then $\pi^N(X)$ is strictly larger than $\pi^{\et}(X)$, in general. Indeed, finite \textit{local} (``purely inseparable'') torsors make a contribution to occur the difference between these fundamental group schemes. Here a finite $k$-group scheme $G$ is said to be \textit{local} if it is connected, i.e., if $G^0$ denotes the connected component of the identity $1\in G$, then $G^0=G$. For example, $G=\alpha_p$, or $\mu_p$.

\subsection{Main results}

In the present paper, we will attempt to estimate the difference between $\pi^N(U)$ and $\pi^{\et}(U)$ for a smooth affine curve $U$ defined over an algebraically closed field $k$ of positive characteristic $p>0$ from the viewpoint of the Inverse Galois Problem. We will study a purely inseparable analogue of the Abhyankar Conjecture for affine curves $U$~(cf.~\cite{ab57}\cite{ab92}; see also Section \ref{sec:int-classical}), i.e., we will try to describe the set
\begin{equation*}
\pi_A^{\loc}(U)\Def\{\text{finite local quotients of  $\pi^{N}(U)$}\}.
\end{equation*}
More precisely, $\pi^{\loc}_A(U)$ is the set of isomorphism classes of fintie local $k$-group schemes  which appears as a finite quotient of $\pi^{N}(U)$.

Now let us explain the contents of the present paper. In Section \ref{sec:preliminaries}, we will briefly review the definition of Nori's profinite fundamental group scheme and the maximal linearly reductive quotient of it. 
In Section \ref{sec:a purely insep}, we will see the maximal local linearly reductive quotient of $\pi^N(U)$ provides a necessarily condition for a finite local $k$-group scheme $G$ to belong to the set $\pi^{\loc}_A(U)$~(cf.~Proposition \ref{prop:necessarily cond}). Now let us explain this. Let $X$ be a smooth compactification of $U$. 
Let $n\Def\#(X\setminus U)$. Note that the affineness assumption of $U$ implies $n>0$. Let $\gamma$ be the $p$-rank of the Jacobian variety of $X$, i.e., $\gamma=\Dim_{\F_p}{\rm Pic}^0_X[p](k)$. We will see that  for any finite local $k$-group scheme $G$, if $G\in\pi_A^{\loc}(U)$, then the character group $\X(G)\Def\Hom(G,\G_m)$ must be embeddable as a subgroup into $(\Q_p/\Z_p)^{\oplus \gamma+n-1}$. Then we can ask whether or not the converse is true~(cf.~Question \ref{ques:purely insep}):

\begin{ques}\label{ques:int-purely insep}
Let $U$ be a smooth affine curve and $G$ a finite local $k$-group scheme.  If there exists an injective homomorphism $\X(G)\hookrightarrow (\Q_p/\Z_p)^{\oplus \gamma+n-1}$, then does $G$ belong to the set $\pi^{\loc}_A(U)$?
\end{ques}

The main purpose of the present paper is to give a partial affirmative answer to Question \ref{ques:int-purely insep}. The main result is the following~(cf.~Proposition \ref{prop:nilpotent};~Corollary \ref{cor:bertini};~Corollary \ref{cor:GL_2}):

\begin{thm}\label{thm:int-main}
(1) For any smooth affine curve $U$ over $k$ and any finite local nilpotent $k$-group scheme $G$, if there exists an injective homomorphism $\X(G)\hookrightarrow (\Q_p/\Z_p)^{\oplus \gamma+n-1}$, then there exists a surjective homomorphism $\pi^N(U)\twoheadrightarrow G$.\\
(2) Let $\Sigma$ be a semi-simple simply connected algebraic group over $k$. Then for any integer $r>0$, there exists a surjective homomorphism
\begin{equation*}
\pi^{N}(\A_k^1)\twoheadrightarrow\Sigma_{(r)}
\end{equation*}
of $\pi^N(\A_k^1)$ onto the $r$-th Frobenius kernel $\Sigma_{(r)}\Def\Ker(F^{(r)}:\Sigma^{(-r)}\to\Sigma)$.\\
(3) Assume $p=2$. Then for any integer $r>0$, there exists a surjective homomorphism
\begin{equation*}
\pi^{N}(\G_m)\twoheadrightarrow\GL_{2(r)}
\end{equation*}
of $\pi^N(\G_m)$ onto the $r$-th Frobenius kernel $\GL_{2(r)}$ of $\GL_2$.
\end{thm}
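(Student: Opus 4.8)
Everything rests on Nori's criterion: for a finite $k$-group scheme $G$, a pointed $G$-torsor $(P,p)\to(U,u)$ induces a faithfully flat homomorphism $\pi^N(U)\twoheadrightarrow G$ precisely when it is \emph{reduced}, i.e.\ not induced from a torsor under a proper closed subgroup scheme $H\subsetneq G$; equivalently, no quotient scheme $P/H$ ($H\subsetneq G$) has a section through $p$. Thus in each part the task is to exhibit a reduced torsor on the stated curve. I will use twice over that, for an affine curve $U$ over $k=\overline k$, one has $H^2_{\rm fppf}(U,\mu_{p^m})=0$ (Tsen) and $H^2_{\rm fppf}(U,\alpha_p)=0$, whereas $H^1_{\rm fppf}(U,\alpha_p)\cong\calO(U)/\calO(U)^p$ is infinite-dimensional; and that $\pi^N(U')\to\pi^N(U)$ is surjective for $U'\subset U$ dense open in a smooth curve, since a reduction of structure group over $U'$ extends over $U$ by normality.

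\textbf{(1)} I would begin from the structure theory: a finite local nilpotent $G$ splits as $G\cong M\times V$ with $M=\Diag(\X(G))$ diagonalizable and $V$ unipotent. Indeed $M$ is the largest multiplicative subgroup scheme, and it is central because $G$ is connected, so the conjugation action $G\to\Aut(M)$ factors through $\pi_0(G)=1$; then $V:=G/M$ is unipotent, and the central extension $1\to M\to G\to V\to1$ splits since ${\rm Ext}^1(V,M)=0$ for $V$ unipotent and $M$ multiplicative. Now I treat the factors separately. The maximal local linearly reductive quotient of $\pi^N(U)$ is $\Diag(D)$ with $D=\varinjlim_m H^1_{\rm fppf}(U,\mu_{p^m})$; using the Kummer sequence, the divisibility of ${\rm Pic}(U)$ (so each ${\rm Pic}(U)[p^m]$ is a free $\Z/p^m$-module) and the freeness of $\calO(U)^\times/(\calO(U)^\times)^{p^m}$, one computes $D\cong(\Q_p/\Z_p)^{\oplus\gamma+n-1}$; hence $M$ is a quotient of $\pi^N(U)$ \emph{exactly} under the hypothesis $\X(G)\hookrightarrow(\Q_p/\Z_p)^{\oplus\gamma+n-1}$. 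The unipotent factor $V$ is a quotient of $\pi^N(U)$ with no hypothesis at all: one does a dévissage along a central series with successive quotients $\cong\alpha_p$, at each stage the obstruction to lifting the torsor already built vanishing in $H^2(U,\alpha_p)=0$, and the liftings forming a torsor under the infinite-dimensional group $H^1(U,\alpha_p)$ in which the non-reduced ones occupy only a proper subset; so a suitable lift is reduced. Finally, the product of a reduced $M$-torsor and a reduced $V$-torsor is reduced for $M\times V$, because $M$ and $V$ share no nontrivial common quotient group scheme (a homomorphism from a unipotent to a multiplicative group is trivial). The substantive points are the dévissage for $V$ and the identification of $D$.

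\textbf{(2)} Here I would first observe that the Frobenius isogeny $F^{(r)}\colon\Sigma^{(-r)}\to\Sigma$ is a \emph{reduced} $\Sigma_{(r)}$-torsor over $\Sigma$ itself: for every proper closed $H\subsetneq\Sigma_{(r)}$ the quotient $\Sigma^{(-r)}/H$ is smooth and irreducible of dimension $\Dim\Sigma$, and $\Sigma^{(-r)}/H\to\Sigma$ is finite, purely inseparable and of degree $>1$, hence has no section; therefore $\pi^N(\Sigma)\twoheadrightarrow\Sigma_{(r)}$. Since the big cell $U^-\times T\times U^+\hookrightarrow\Sigma$ is an open immersion whose source is isomorphic to $\A^{2N}\times\G_m^{\ell}$ ($N=\#$positive roots, $\ell=\rank\Sigma$), fixing a point of $T$ yields closed immersions $\A^1\hookrightarrow\A^{2N}\hookrightarrow\Sigma$, so $\Sigma$ contains many copies of $\A^1$. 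The plan is then to restrict $F^{(r)}$ to a \emph{general} such line $C\cong\A^1$ and to invoke a Bertini-type theorem (Corollary~\ref{cor:bertini}) asserting that the restriction stays reduced: a failure would force $C\hookrightarrow\Sigma$ to lift along one of the covers $\Sigma^{(-r)}/H\to\Sigma$, with $H$ ranging over the finitely many conjugacy classes of maximal proper closed subgroup schemes of $\Sigma_{(r)}$ (Frobenius kernels of maximal parabolics, suitably twisted, and $\Ker F^{(r-1)}$), and the set of lines admitting such a lift ought to be a proper closed subset of the space of lines. Granting this, $\pi^N(\A_k^1)=\pi^N(C)\twoheadrightarrow\Sigma_{(r)}$. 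I expect the proof that this ``bad locus of lines'' is proper — which needs a firm hold on the infinitesimal subgroup scheme structure of $\Sigma_{(r)}$, and is where the simple connectedness of $\Sigma$ should enter — to be the main obstacle of the whole theorem.

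\textbf{(3)} For the last part ($p=2$) I would use the semidirect-product decomposition $\GL_{2(r)}=\SL_{2(r)}\rtimes\mu_{2^r}$, where $\mu_{2^r}=\G_{m(r)}$ is embedded by $\zeta\mapsto\Diag(\zeta,1)$, a section of $\Det\colon\GL_{2(r)}\to\mu_{2^r}$ with kernel $\SL_{2(r)}$, acting on $\SL_{2(r)}$ by conjugation. By a dévissage criterion, a $\GL_{2(r)}$-torsor $P\to\G_m$ is reduced as soon as its $\mu_{2^r}$-quotient $P/\SL_{2(r)}\to\G_m$ is a reduced $\mu_{2^r}$-torsor and $P\to P/\SL_{2(r)}$ is a reduced $\SL_{2(r)}$-torsor. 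For the former I take the Kummer torsor attached to the coordinate unit $t\in\calO(\G_m)^\times$, reduced because $t$ is not a square modulo squares in $k[t^{\pm1}]^\times=k^\times\times t^{\Z}$; this is exactly why one must pass from $\A_k^1$ (where $\calO^\times=k^\times$ is $2$-divisible) to $\G_m$. That Kummer cover is itself $\cong\G_m$, with $\mu_{2^r}$ acting by $s\mapsto\zeta s$; on it I would realize $\SL_{2(r)}$ by an \emph{equivariant} form of the construction of part (2) — pulling back $F^{(r)}\colon\SL_2^{(-r)}\to\SL_2$ along a $\mu_{2^r}$-equivariant embedding of $\A^1$ into $\SL_2$ such as $s\mapsto\left(\begin{smallmatrix}1 & s\\ s^{2^r-1} & 1+s^{2^r}\end{smallmatrix}\right)$ (equivariant for scaling on $\A^1$ and $\Diag(\zeta,1)$-conjugation on $\SL_2$) — and then verify by direct computation that the resulting $\SL_{2(r)}$-torsor is reduced. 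The equivariant structure lets this torsor descend to the required $\GL_{2(r)}$-torsor over $\G_m=\G_m/\mu_{2^r}$, and the dévissage criterion concludes. The role of $p=2$ is precisely to make this last computation behave as stated (and to make the statement non-trivial: for odd $p$ the group $\mu_2$ is \'etale and $\GL_{2(r)}\cong\SL_{2(r)}\times\mu_{p^r}$, so the assertion reduces immediately to part (2)); granting part (2), this part is routine but fiddly.
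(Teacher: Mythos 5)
Your plan fails at its first substantive step, the structure theory behind part (1). You assert that a finite local nilpotent $G$ splits as $G\cong M\times V$ with $M=\Diag(\X(G))$ the largest multiplicative subgroup, because ``${\rm Ext}^1(V,M)=0$ for $V$ unipotent and $M$ multiplicative''. This is false, and the paper's own key example (Example \ref{ex:SL_2}) is the counterexample: for $p=2$ the group $\SL_{2(1)}$ is finite, local and nilpotent, contains the central multiplicative subgroup $\mu_2$, sits in the \emph{nonsplit} central extension $1\to\mu_2\to\SL_{2(1)}\to\alpha_2\times\alpha_2\to 1$, and has $\X(\SL_{2(1)})=1$; so the multiplicative part neither splits off nor is detected by $\X(G)$. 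Your reduction therefore collapses exactly on the groups the theorem is aimed at. The paper's proof of Proposition \ref{prop:nilpotent} avoids any such decomposition: it inducts on the order of $G$, picks an arbitrary central subgroup $H\simeq\alpha_p$ or $\mu_p$, lifts a given surjection $\pi^{\loc}(U)\twoheadrightarrow G/H$ through $G$ using $H^2_{\fpqc}(U,\alpha_p)=H^2_{\fpqc}(U,\mu_p)=0$ on an affine curve, and when the lift degenerates (forcing $G\simeq H\times G/H$) corrects it by a counting argument: $\Dim_kH^1_{\fpqc}(U,\alpha_p)=\infty$, respectively $\Dim_{\F_p}\Hom_k(G/H,\mu_p)<\Dim_{\F_p}\Hom_k(G,\mu_p)\le\gamma+n-1=\Dim_{\F_p}H^1_{\fpqc}(U,\mu_p)$. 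Your d\'evissage for the ``unipotent factor'' is the right spirit, but it must be run on $G$ itself with mixed $\alpha_p$/$\mu_p$ central steps; as written, part (1) is not proved.

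Parts (2) and (3) are likewise missing their substantive content. In (2) you leave unproved precisely the decisive claim — that the Frobenius torsor stays saturated on a \emph{general line} — and in the nearest test case that expectation is simply wrong: the saturated $\alpha_p^{\oplus2}$-torsor $z_1^p=x,\ z_2^p=y$ over $\A^2_k$ becomes non-saturated on \emph{every} line, since affine-linear functions restrict to a one-dimensional span in $k[t]/k[t^p]$; this is why the paper's Theorem \ref{thm:bertini} cuts by non-linear graphs $x_n=\prod_i(x_i^{p^N}+x_i^M)$ (Lemma \ref{lem:bertini}), verified through the $1$-stratified-bundle criterion, after first producing a saturated $\Sigma_{(1)}$-torsor on $U^+\times U^-\simeq\A^N_k$ via the multiplication map and $\langle U^{\pm}_{(1)}\rangle=\Sigma_{(1)}$ (Remark \ref{rem:purely insep}(4) — this is where simple connectedness enters), and then passing to all $r$ by Lemma \ref{lem:any ht}; your alternative route through ``finitely many maximal subgroup schemes of $\Sigma_{(r)}$'' is not substantiated. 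In (3) your semidirect-product/equivariant-descent skeleton is plausible, but the ``routine'' verification is exactly the hard part, and your sample candidate $s\mapsto(1,\ s;\ s^{2^r-1},\ 1+s^{2^r})$ already fails: for $r=1$ one has $\overline{f_{11}}=\overline{1+s^2}=0$ and $\overline{f_{22}}=\overline{1}=0$ modulo squares, so neither independence condition of Corollary \ref{cor:SL_2} can hold, the torsor over $\A^1_k$ is not saturated, and hence neither is its restriction to the Kummer cover $\G_m$. The paper instead proves (3) by the explicit matrix computation of Theorem \ref{thm:main} over $\G_m$ (with odd determinant exponent $m$ supplying the $\mu_{2^r}$-part), again combined with Lemma \ref{lem:any ht}. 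So all three parts of your proposal stop short of, or contradict, the computations that actually carry the theorem.
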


Here, for each integer $r>0$, $\Sigma^{(-r)}$ denotes the $(-r)$-th Frobenius twist of $\Sigma$ and $F^{(r)}:\Sigma^{(-r)}\to\Sigma$ is the $r$-th relative Frobenius morphism, which is a homomorphism of algebraic groups. Furthermore, its kernel $\Sigma_{(r)}=\Ker(F^{(r)})$ is a finite local $k$-group scheme.

\begin{rem}
(1) If $\Sigma$ is a semi-simple simply connected algebraic group over $k$, it turns out that $\X(\Sigma_{(r)})=1$~(cf.~Remark \ref{rem:purely insep}(3)). Therefore, Theorem \ref{thm:int-main}(2) gives an affirmative answer to Question \ref{ques:int-purely insep} for the affine line $\A_k^1$ and for the Frobenius kernels   $\Sigma_{(r)}~(r>0)$ of a semi-simple simply connected algebraic group $\Sigma$.\\
(2) Since $\X(\GL_{2(r)})=\Z/2^r\Z\subset\Q_2/\Z_2$ if $p=2$, Theorem \ref{thm:int-main}(3) gives an affirmative answer to Question \ref{ques:int-purely insep} for the multiplicative group $U=\G_m$ and for the  Frobenius kernels $\GL_{2(r)}~(r>0)$ of $\GL_2$ in the case where $k$ is of characteristic $p=2$.\\
(3) Note that each homomorphism $\pi^N(U)\to G$ of $\pi^N(U)$ into a finite $k$-group scheme $G$ corresponds bijectively to a (fpqc) $G$-torsor $P\to U$~(cf.~Proposition \ref{prop:univ}). To prove Theorem \ref{thm:int-main}(2), we will show there exists a $k$-morphism $f:\A_k^1\to\Sigma$ such that the resulting $\Sigma_{(r)}$-torsor $f^*\Sigma^{(-r)}\to\A_k^1$ realizes a surjective homomorphism $\pi^N(\A_k^1)\twoheadrightarrow\Sigma_{(r)}$. To prove the existence of such a morphism $f$, we will first reduce the problem to the case where $r=1$~(cf.~Lemma \ref{lem:any ht}). Next we will deduce the existence of such an $f$ from a Bertini type theorem for height one torsors~(cf.~Theorem \ref{thm:bertini}).\\
(4) In particular, Theorem \ref{thm:int-main}~(1) (or Theorem \ref{thm:int-main}(2)) implies that if $p=2$, then $\SL_{2(1)}$ appears as a finite quotient of $\pi^N(\A_k^1)$~(cf.~Example \ref{ex:SL_2}). On the other hand, it turns out that any  $\SL_{2(1)}$-torsor must be of the form $f^*\SL_2^{(-1)}\to\A_k^1$ for some $k$-morphism $f:\A_k^1\to\SL_2$. In this case, we can explicitly describe  the subset of ${\rm Mor}_k(\A_k^1,\SL_2)$ consisting of $k$-morphisms $f$ such that the torsor $f^*\SL_2^{(-1)}$ realizes a surjective homomorphism $\pi^N(\A_k^1)\twoheadrightarrow\SL_{2(1)}$~
(cf.~Corollary \ref{cor:SL_2}).
\end{rem}

\begin{rem}
Considering recent developments of the theory of  `\textit{tame stacks}' (cf.~\cite{aov08}\cite{ma12}\cite{gi12}), the author expects that the notion of linealy reductiveness might provide us with a good analogy between the answer of Question \ref{ques:int-purely insep} and the Abhyankar Conjecture~(cf.~Theorem \ref{thm:abhy}) and that Question \ref{ques:int-purely insep} might be affirmative for any smooth affine curve $U$ and for any finite local $k$-group scheme $G$. See also Remark \ref{rem:int-tame torsors}.
\end{rem}

\subsection{The Abhyankar Conjecture}\label{sec:int-classical}

All the ideas of our arguments in the present paper come from Serre's work~\cite{se90} (The method of embedding problems) or Nori's one \cite{ka85}\cite{no94} in the sequel of the Abhyankar Conjecture for the affine line. Hence, we would like to briefly review on the  conjecture.

First we will recall the precise  statement of it (in a weak form). Let $k$ be an algebraically closed field of positive characteristic $p>0$ and $X$ a smooth projective curve over $k$ of genus $g>0$. Let $\emptyset\neq U\subset X$ be a nonempty open subset with $n\Def\#(X\setminus U)\ge 0$. Let $\Gamma_{g,n}$ be the group defined by
\begin{equation*}
\Gamma_{g,n}\Def\langle a_1,b_1,\dots, a_g,b_g,\gamma_1,\dots,\gamma_n\,|\,\prod_{i=1}^{g}[a_i,b_i]\gamma_1\cdots\gamma_n=1\rangle
\end{equation*}
and $\widehat{\Gamma}_{g,n}$ by its profinite completion. Note that if $n>0$, then $\Gamma_{g,n}$ is a free group $F_{2g+n-1}$ of rank $2g+n-1$.
A classical result due to Grothendieck \cite{gr71} implies:
\begin{equation*}
\pi^{\et}_1(U)^{(p')}\simeq\widehat{\Gamma}_{g,n}^{(p')}.
\end{equation*}
Here $(-)^{(p')}$ means the maximal pro-prime to $p$ quotient.   In particular, if $n>0$, i.e., $U$ is affine, then $\pi_1^{\et}(U)^{(p')}\simeq\widehat{F}_{2g+n-1}^{(p')}$. Hence, in this case, if a finite group $G$ appears as a finite quotient of $\pi^{\et}_1(U)$, then $G^{(p')}$ must be  generated by at most $2g+n-1$ elements. Here, the group $G^{(p')}$ is the quotient of $G$ by the subgroup $p(G)$ generated by all the $p$-Sylow subgroups of $G$. The Abhyankar Conjecture claims that the converse is also true:

\begin{thm}\label{thm:abhy}(The Abhyankar Conjecture in a weak form~\cite{ra94}\cite{ha94})
Assume that $U$ is affine. Let $G$ be an arbitrary finite group. Then $G\in\pi^{\et}_A(U)$~(cf.~(\ref{eq:et_A})) if and only if $G^{(p')}$ can be  generated by at most $2g+n-1$ elements.
\end{thm}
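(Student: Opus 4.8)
The ``only if'' direction is the easy half, and I would just make explicit the remark preceding the statement. Grothendieck's comparison theorem gives $\pi_1^{\et}(U)^{(p')}\simeq\widehat{\Gamma}_{g,n}^{(p')}$; since $n>0$ this is the free profinite prime-to-$p$ group $\widehat{F}_{2g+n-1}^{(p')}$, hence topologically generated by $2g+n-1$ elements, and therefore so is every one of its finite quotients. As $G^{(p')}=G/p(G)$ has order prime to $p$, any surjection $\pi_1^{\et}(U)\twoheadrightarrow G^{(p')}$ factors through $\pi_1^{\et}(U)^{(p')}$, so $G^{(p')}$ is generated by at most $2g+n-1$ elements. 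Nothing beyond \cite{gr71} is needed here.

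For the converse I would follow Raynaud and Harbater, reducing everything, via patching, to the single statement that every quasi-$p$ group (a finite group $G$ with $G=p(G)$) is a quotient of $\pi_1^{\et}(\A_k^1)$. Concretely: suppose $G^{(p')}$ is generated by $\le 2g+n-1$ elements. By a group-theoretic lemma of Serre this is equivalent to the existence of a generating system of $G$ compatible with the relation defining $\Gamma_{g,n}$ in which the inertia contribution at each of the $n$ punctures may be chosen to be an arbitrary quasi-$p$ subgroup. One then constructs a tame cover of $X$ carrying the prescribed prime-to-$p$ monodromy and, for each puncture, a connected cover of a small (formal or rigid-analytic) disc around it realizing the desired quasi-$p$ group --- the latter being exactly a cover of the affine line --- and glues all of these by Harbater's formal and rigid-analytic patching methods into a connected $G$-cover of $U$. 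So everything comes down to Raynaud's theorem for $\A_k^1$.

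The real content, and the step I expect to be the main obstacle, is precisely that theorem: every quasi-$p$ group $G$ occurs as a quotient of $\pi_1^{\et}(\A_k^1)$. Here one first reduces, by induction on $|G|$, to the \emph{minimal} quasi-$p$ groups --- those having no proper quasi-$p$ subquotient --- which structural finite group theory sorts into a few manageable types; a non-minimal quasi-$p$ group is built from strictly smaller quasi-$p$ groups by fibre-product and patching arguments together with the inductive hypothesis. For a minimal $G$ one works over a complete discrete valuation ring, builds a $G$-cover of $\mathbb{P}^1$ whose stable reduction is a tree of projective lines carrying auxiliary covers of controlled, as-small-as-possible wild ramification, and assembles them by rigid-analytic patching; the delicate point is to bound the conductors sharply enough (Raynaud's vanishing-cycle estimates) that specializing the global cover yields a $G$-cover of $\A_k^1$ unramified away from infinity. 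Carrying out these ramification estimates over the classification of minimal quasi-$p$ groups is the heart of \cite{ra94}, with the reduction of an arbitrary affine curve to $\A^1_k$ supplied by the patching techniques of \cite{ha94}; it is far beyond a routine computation, so in the present paper I would merely quote the result.
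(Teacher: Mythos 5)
Your proposal matches the paper exactly: the ``only if'' direction is the same observation the paper makes right before the statement (via Grothendieck's isomorphism $\pi_1^{\et}(U)^{(p')}\simeq\widehat{F}_{2g+n-1}^{(p')}$), and for the converse the paper, like you, gives no proof but simply attributes it to Raynaud~\cite{ra94} (the case $U=\A^1_k$) and Harbater~\cite{ha94} (the general case). Your sketch of their patching and vanishing-cycle strategy is accurate background, but the paper's ``proof'' is, as you ultimately propose, just a citation of those results.
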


Raynaud proved the threorem for the case where $U=\A^1_k$~\cite{ra94}. Soon after, Harbater obtained for the general case~\cite{ha94}.

\begin{rem}(1)~Theorem \ref{thm:abhy} implies that $\pi^{\et}_A(U)$ is determined by the topological one $\pi_1^{\rm top}(U_0)$ of a Riemann surface $U_0$ of the same type $(g,n)$ as $U$. An affirmative answer to our question  (Question \ref{ques:int-purely insep}) says that the set of finite local quotients of $\pi^N(U)$ might be determined by the \'etale one $\pi_1^{\et}(U)$. Note that the number $\gamma+n-1$ can be reconstructed from $\pi_1^{\et}(U)$ group-theoretically (cf. \cite{ta99}).\\
(2)~The assumption that $U$ is affine is essential. This is because if $U=X$ is projective, then $\pi^{\et}_1(X)$ is topologically finitely generated and the isomorphism class of $\pi^{\et}_1(X)$ can be completely determined by the set $\pi^{\et}_A(X)$ of finite quotients of it~(cf.~\cite[Proposition 5.4]{fj08}). On the other hand, it is known that $\pi_1^{\et}(X)$ itself has much information about the moduli of $X$~(cf.~\cite{ta99}\cite{ta04}) if the genus of $X$ is $g\ge 2$. Therefore, such a simple   answer as above (Theorem \ref{thm:abhy}) cannot be expected and the corresponding problem is much more challenging~(cf.~\cite{ps00}). 
\end{rem}

In the particular case $U=\A^1_k$ is the affine line, the Abhyankar Conjecture states that a finite group $G$ belongs to the set $\pi_A^{\et}(\A^1_k)$ if and only if $G^{(p')}=1$. The latter condition is equivalent to the one that $G$ can be generated by $p$-Sylow subgroups of it, and such a group is called a \textit{quasi-$p$-group}. Obviously, any $p$-group is a quasi-$p$-group. In \cite{ab57}\cite{ab92}, Abhyankar found an explicit equation defining a finite \'etale Galois covering over $\A^1_k$ whose corresponding homomorphism $\pi^{\et}_1(\A^1_k)\to G$ is surjective for various nontrivial quasi-$p$-groups $G$.  In \cite{se90}, Serre appoarched the conjecture for the affine line $\A^1_k$ by the method of embedding problems. As a result, he proved that the conjecture is ture for any solvable quasi-$p$-group. This result gives a first reduction step in the proof due to Raynaud~\cite{ra94}. On the other hand, although his result was not used in Raynaud's proof, Nori provided many examples of quasi-$p$ groups arrearing as a finite quotient of $\pi^{\et}_1(\A_k^1)$, which gave another evidence of the Abhyankar conjetcure for $\A_k^1$~(cf.~\cite{ka85}\cite{no94}). Let $\Sigma$ be a semi-simple simply connected algebraic group (for example, $\Sigma=\SL_n$) over $\F_{p^r}$. Let $L:\Sigma\to\Sigma;~g\mapsto g^{-1}F^r(g)$ be the Lang map, where $F$ is the absolute Frobenius morphism of $\Sigma$. Note that $L$ gives a Galois covering with group $\Sigma(\F_{p^r})$. Nori showed the existence of a closed immersion $\iota:\A^1_{\F_{p^r}}\hookrightarrow\Sigma$ such that $\iota^*\Sigma$ is geometrically-connected, whence the Galois covering $(\iota^*\Sigma)_k\to\A^1_k$ corresponds to a surjective homomorphism $\pi^{\et}_1(\A^1_k)\twoheadrightarrow\Sigma(\F_{p^r})$.
For a brief survey of all the above results, see, for example, \cite[Section 3]{ha14}.

\begin{rem}(1) Our result for finite local nilpotent group schemes (Theorem \ref{thm:int-main}(1)~(cf.~Proposition \ref{prop:nilpotent})) is motivated by Serre's result on solvable quasi-$p$-groups \cite{se90}. However, we can extend his method only in  the nilpotent case.\\
(2) Theorem \ref{thm:int-main}(2) can be considered as a purely inseparable analogue of Nori's result \cite{ka85}\cite{no94}. To prove this, we will rely on a Bertini type theorem (Theorem \ref{thm:bertini}). Hence, we cannot give an explicit equation defining a saturated $\Sigma_{(1)}$-torsor over $\A_k^1$ for general $\Sigma$. In the case where $p=2$ and $\Sigma=\SL_2$, we can clarify this situation in not a  conceptual but an explicit way.\\
(3) In the classical conjecture, in particular, in the proof due to Raynaud or Harbater, the rigid  analytic or formal patching method and the theory of stable curves provide us with strong tools to solve the problem. The author is not sure if these methods can be applicable in our situation.\\
\end{rem}

\begin{rem}\label{rem:int-tame torsors}
The full statement of the classical Abhyankar Conjecture (proved by Harbater) states that as a covering realizing a finite quotient $G\in\pi_A^{\et}(U)$, one can take a $G$-covering tamely ramified except for one point $x_0\in X\setminus U$~(cf.~\cite[Conjecture 1.2]{ha94}). So, it might be natural to ask for an analogous problem.
To formulate it, we need the notion of \textit{tamely ramified torsors}, a similar notion to tamely remified coverings. 

The notion of \textit{tameness} of an action of a group scheme $G$ on a scheme $X$ was introduced by Chinburg-Erez-Pappas-Taylor~\cite{cept96}. On the other hand, Abramovich-Olsson-Vistoli gave another formulation in terms of \textit{tame stacks} (cf.~\cite{aov08}). A relation between these two works has been studied by Marques~\cite{ma12}. Moreover, in \cite{bo09}, Borne defined the fundametal group scheme which classifies tamely ramified torsors by using the  Tannakian category of parabolic bundles.
To obtain a good analogue of the  notion of tame coverings, one needs  to consider an extension of a $G$-torsor $P$ over an open subset $U$ of $X$ with $D=X\setminus U$ a normal crossing divisor to an $X$-scheme $Q$ together with an action of $G$. In \cite{gi12}, Gillibert discussed on this point. Recently, in \cite{za16}, Zalamansky formulated a ramification theory in  purely inseparable setting in terms of ramification divisors. The author is not sure if there exists any relations between Zalamansky's formulation and the previous ones.

 In view of the above recent developments of the theory of  tamely ramified torsors, one can formulate a naive analogue of the strong Abhyankar Conjecture \cite[Conjecture 1.2]{ha94} in an obvious way. However, the author has no evidence of it. So, in the present paper, we will concentrate an analogue of the weak Abhyankar Conjecture.
\end{rem}

\begin{ack} The author thanks Professor Takao Yamazaki for his encouragements, for clarifying discussions and for giving him helpful advices. The author thanks Professor Takuya Yamauchi for having conversations about this problem and for encouraging him. The author thanks Doctor Fuetaro Yobuko for answering his several questions. The author thanks Professor Seidai Yasuda for pointing out a mistake to him at the coference ``Regulators in Niseko 2017". The author thanks the reviewer for giving him detailed and valuable comments on the first version of the paper.
The author is supported by JSPS, Grant-in-Aid for Scientific Research for JSPS fellows (16J02171).
\end{ack}

\begin{nota}
In this paper, $k$ always means a perfect field; an \textit{algebraic variety} over $k$ means a geometrically-connected and reduced scheme separated of finite type over $k$; a \textit{curve} over $k$ is an algebraic variety over $k$ of dimension one; an \textit{algebraic group} over $k$ means a group object of the category of algberaic varieties over $k$. Note that, automatically, any algebraic group over $k$ is smooth.

Let $k$ be a perfect field of positive characteristic $p>0$ and $X$ an algebraic variety over $k$. We denote by $F_X$, or simply $F$, the absolute Frobenius morphism $F=F_X:X\to X$. For each integer $n\in\Z$, we denote by $X^{(n)}$ the $n$-th \textit{Frobenius twist} of $X$:
\begin{equation*}
\begin{xy}
\xymatrix{\ar@{}[rd]|{\square}
X^{(n)}\ar[r]\ar[d]& X\ar[d]\\
\Spec k\ar[r]_{\simeq}^{F^n}&\Spec k.
}
\end{xy}
\end{equation*}
If $n>0$, the morphism $F^{n}:X\to X$ then factors uniquely through $X^{(n)}$. The resulting morphism, denoted by $F^{(n)}:X\to X^{(n)}$, is the $n$-th relative Frobenius morphism.

We denote by $\Vect$ the category of finite dimensional vector spaces over $k$. For an affine $k$-group scheme $G$, we denote by $\Rep(G)$ the category of finite dimensional left $k$-linear representations of $G$. For each $(V,\rho)\in\Rep(G)$, we denote by $V^G$ the $G$-invariant subspace of $G$, i.e., $V^G=\{v\in V\,|\,\rho(v)=v\otimes 1\}$.
\end{nota}


\section{Fundamental group scheme}\label{sec:preliminaries}

\subsection{Profinite fundamental group scheme}\label{sec:def fund grp sch}

In this subsection, we will briefly recall the definition of \textit{Nori's profinite fundamental group scheme}~\cite{no76}\cite{no82}.

Let $X$ be an algebraic variety over $k$ together with a rational point $x\in X(k)$. We define the category $N(X,x)$ as follows. The objects of $N(X,x)$ are all the triples $(P,G,p)$ where
\begin{itemize}
\item $G$:~a finite $k$-group scheme;
\item an (fpqc) $G$-torsor $\pi:P\to X$;
\item $p\in P(k)$:~a rational point with $\pi(p)=x$.
\end{itemize}
Let $(P,G,p),(Q,H,q)\in N(X,x)$ be arbitrary two objects. Then a morphism $(P,G,p)\to (Q,H,q)$ is a pair $(f,\phi)$ of an $X$-morphism $f:P\to Q$ and a $k$-homomorphism $\phi:G\to H$ making the following diagram commute
\begin{equation*}
\begin{xy}
\xymatrix{
P\times G\ar[r]\ar[d]_{(f,\phi)}& P\ar[d]^{f}\\
Q\times H\ar[r]& Q.
}
\end{xy}
\end{equation*}
Here, the above two horizontal morphisms are the ones defining the actions of torsors. By these objects and morphisms, $N(X,x)$ becomes a category. In \cite{no82}, Nori proved that the category $N(X,x)$ is a cofiltered category and, in particular that the projective limit
\begin{equation*}
\varprojlim_{(P,G,p)\in N(X,x)}(P,G,p)
\end{equation*}
exists.

\begin{definition}
The projective limit of underlying group schemes
\begin{equation*}
\pi^N(X,x)\Def\varprojlim_{(P,G,p)\in N(X,x)}G
\end{equation*}
is called the \textit{profinite fundamental group scheme}, or shortly the \textit{fundamental group scheme}, of $(X,x)$.
\end{definition}

From the definition, the following is immediate:

\begin{prop}\label{prop:univ}
The fundamental group scheme $\pi^N(X,x)$ is a profinite $k$-group scheme such that for any finite $k$-group scheme $G$,  the map
\begin{equation*}
\begin{aligned}
\Hom_k(\pi^N(X,x),G)~&\to~{\rm Tors}(G,(X,x))\\
\phi\quad ~&\mapsto~(X_x^N,x^N)\times^{\pi^{N}(X,x)}{}^{\phi}G
\end{aligned}
\end{equation*}
is bijective. Here
\begin{equation*}
(X_x^N,x^N)\Def\varprojlim_{(P,G,p)\in N(X,x)} (P,p)
\end{equation*}
and ${\rm Tors}(G,(X,x))$ is the set of isomorphism classes of pointed $G$-torsors over $(X,x)$. Moreover,  the  torsor $(X_x^N,x^N)\times^{\pi^{N}(X,x)}{}^{\phi}G$ associated with a homomorphism $\phi$ is defined as the quotient of the product $(X^N_x,x^N)\times G$ by the diagonal action of $\pi^N(X,x)$:
\begin{equation*}
(x,g)\cdot \gamma\Def(xg^{-1},g\phi(\gamma))
\end{equation*}
for $(x,g)\in X^N_x\times G$ and $\gamma\in\pi^N(X,x)$.
\end{prop}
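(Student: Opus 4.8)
The plan is to write down an explicit inverse of the displayed map, using only the universal property of the cofiltered limit together with the standard formalism of associated (contracted product) torsors. Abbreviate $\widehat{\pi}=\pi^N(X,x)$. Recall that, besides $\widehat{\pi}=\varprojlim_{(P,G,p)}G$, the same limit produces the pointed pro-object $(X_x^N,x^N)=\varprojlim_{(P,G,p)}(P,p)$; since every transition morphism of $N(X,x)$ is affine (a morphism of torsors under finite, hence affine, group schemes), the fpqc-torsor axioms descend to the limit, so $X_x^N\to X$ is a pointed fpqc $\widehat{\pi}$-torsor, and for each object $(P,G,p)\in N(X,x)$ the limit cone supplies a pointed $X$-morphism $X_x^N\to P$ equivariant along a canonical homomorphism $\phi_{(P,G,p)}\colon\widehat{\pi}\to G$. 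The displayed map sends $\phi$ to the associated $G$-torsor $P_\phi\Def X_x^N\times^{\widehat{\pi}}{}^{\phi}G$ with base point $[x^N,e]$.

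First I would define the candidate inverse: to $(Q,q)\in{\rm Tors}(G,(X,x))$ associate $\phi_{(Q,G,q)}$, the homomorphism attached above to the object $(Q,G,q)$ of $N(X,x)$. One composite is the clean half: for $(Q,q)$ the canonical morphism $X_x^N\to Q$ is equivariant along $\phi_{(Q,G,q)}$, hence factors as $X_x^N\to X_x^N\times^{\widehat{\pi}}{}^{\phi_{(Q,G,q)}}G\to Q$, and the second arrow is a $G$-equivariant $X$-morphism of $G$-torsors, hence an isomorphism carrying $[x^N,e]$ to $q$. For the other composite, fix $\phi$ and consider $P_\phi$: it carries two pointed $X$-morphisms from $X_x^N$, namely the tautological one $z\mapsto[z,e]$, equivariant along $\phi$, and the one from the limit cone, equivariant along $\phi_{(P_\phi,G,[x^N,e])}$. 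Since the $G$-action on $P_\phi$ is free, the homomorphism along which such a morphism is equivariant is recovered from the morphism; so once the two morphisms are seen to agree, we obtain $\phi=\phi_{(P_\phi,G,[x^N,e])}$, which finishes the proof.

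Thus the one point that is not formal bookkeeping is the coincidence of those two morphisms $X_x^N\to P_\phi$ --- equivalently, the uniqueness of a pointed $X$-morphism of torsors equivariant along a given homomorphism. This is exactly the rigidity of pointed torsors that makes the Hom-sets of $N(X,x)$ singletons, hence makes $N(X,x)$ cofiltered, so under the conventions of Section~\ref{sec:def fund grp sch} it may be invoked directly. Concretely, the ``ratio'' of two such morphisms is a morphism $X_x^N\to G$ which, being equivariant for a twisted $\widehat{\pi}$-action on $G$, descends to a section of a finite flat $X$-scheme through a prescribed rational point above $x$, and one forces it to be the evident section using that $X$ is connected and reduced. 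I expect this rigidity step to carry all the content; note in particular that one argues over $X$, not over $X_x^N$, since the latter is typically non-reduced when $G$ is local.
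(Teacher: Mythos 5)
Your overall architecture (construct the inverse from the limit cone; observe that the canonical map $X^N_x\to Q$ induces an isomorphism $X^N_x\times^{\widehat\pi}{}^{\phi_{(Q,G,q)}}G\xrightarrow{\ \sim\ }Q$ of pointed $G$-torsors) is the standard unwinding of the definition, and that half is fine. The gap is exactly at the step you flag as carrying all the content, and your treatment of it does not work. First, the two morphisms $X^N_x\to P_\phi$ you must compare are equivariant along \emph{a priori different} homomorphisms ($\phi$ and the cone projection), so the problem is not ``uniqueness of a pointed morphism equivariant along a \emph{given} homomorphism''; that reduction presupposes what is to be proved. Second, the property you invoke --- that the Hom-sets of $N(X,x)$ are singletons --- is false: for the trivial pointed torsor $(X\times G,G,(x,e))$ every automorphism $\alpha$ of $G$ gives a pointed endomorphism $((z,g)\mapsto(z,\alpha(g)),\alpha)$, so cofilteredness of $N(X,x)$ is not a consequence of singleton Hom-sets (parallel arrows are only coequalized further up the system), and the uniqueness you need cannot be ``invoked directly''. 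Third, the fallback descent sketch does not close the gap: when the two equivariance homomorphisms $\phi,\phi'$ differ, the twisted $\widehat\pi$-action $g\cdot\gamma=\phi(\gamma)^{-1}g\,\phi'(\gamma)$ is by left/right translations rather than group automorphisms, the constant map $e$ is not equivariant, so there is no ``evident section'' to compare with; and the general principle that a section of a finite flat $X$-scheme through a prescribed rational point is unique when $X$ is connected and reduced is simply false --- over $X=\A^1_k=\Spec k[t]$ the finite flat group scheme $\Spec k[t][y]/(y^p-t^{p(p-1)}y)$ has the distinct sections $y=0$ and $y=t^{p}$ passing through the same point of the fibre over $t=0$.

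The statement can be proved without any rigidity over $X$, and this is presumably what the paper means by ``immediate from the definition''. Since $k[\pi^N(X,x)]=\varinjlim k[G_i]$ (filtered colimit over $N(X,x)$) and $k[G]$ is finite dimensional, any homomorphism $\phi\colon\pi^N(X,x)\to G$ factors as $\bar\phi\circ\mathrm{pr}_{(P_i,G_i,p_i)}$ for some object $(P_i,G_i,p_i)$ and some $\bar\phi\colon G_i\to G$. Your ``clean half'', applied at level $i$, identifies $P_\phi$ with $P_i\times^{G_i}{}^{\bar\phi}G$, so $z\mapsto[z,e]$ is a morphism $(P_i,G_i,p_i)\to(P_\phi,G,[x^N,e])$ \emph{in} $N(X,x)$ along $\bar\phi$; compatibility of the limit cone with this morphism then gives $\phi_{(P_\phi,G,[x^N,e])}=\bar\phi\circ\mathrm{pr}_{(P_i,G_i,p_i)}=\phi$, which is the injectivity you were after. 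If instead you insist on proving the rigidity statement itself, you need Nori's actual lemmas (uniqueness of the lift for a \emph{fixed} equivariance homomorphism, plus coequalization of parallel pairs in $N(X,x)$), not merely connectedness and reducedness of $X$.
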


\begin{definition}\label{definition:saturated}
A $G$-torsor $(P,p)\to (X,x)$ is said to be \textit{saturated} if the corresponding homomorphism $\pi^N(X,x)\to G$ is surjective.
\end{definition}

\begin{rem}
If $k$ is an algebraically closed, then for any finite \'etale $k$-group scheme $G$ is the constant group scheme associated with the finite group $G(k)$ and a $G$-torsor over $X$ is nothing but a $G(k)$-covering over $X$. Therefore, from the universality of Grothendieck's \'etale fundamental group $\pi_1^{\et}(X,\overline{x})$, there exists a $k$-homomorphism of  $\pi^N(X,x)$ to the pro-constant group scheme associated with $\pi_1^{\et}(X,\overline{x})$. In fact, this homomorphism is surjective. Furthermore, if $k$ is of characteristic zero, then it is an isomorphism. For details, see \cite[Remark 2.10]{ehs08}.
\end{rem}

\subsection{The maximal linearly reductive quotient of $\pi^N$}\label{sec:lin red}

Now let us recall the maximal \textit{linearly reductive} quotient $\pi^{\lr}(X,x)$ of $\pi^N(X,x)$ (cf. \cite{bv15}). 

\begin{definition}(cf.~\cite[Section 2]{aov08}) A finite $k$-group scheme $G$ is said to be \textit{linearly reductive} if one of the following equivalent conditions is satisfied:\\
(a) The functor $\Rep(G)\to\Vect;~V\mapsto V^G$ is exact;\\
(b) The category $\Rep(G)$ is semi-simple.
\end{definition}

\begin{prop}\label{prop:aov}(Abramovich-Olsson-Vistoli~\cite[Proposition 2.13]{aov08})
A finite $k$-group scheme $G$ is linearly reductive if and only if for an algebraic closure $\overline{k}$ of $k$, then $G_{\overline{k}}:=G\times_k\overline{k}$ is isomorphic to a semi-direct product $H\ltimes\Delta$ where $H$ is a finite constant $\overline{k}$-group scheme of order prime to the characteristic of $k$ and $\Delta$ is a finite diagonalizable $\overline{k}$-group scheme.
\end{prop}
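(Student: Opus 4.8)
The plan is to reduce to the case where $k$ is algebraically closed and then analyse the connected and \'etale parts of $G$ separately; the statement will come out with $\Delta=G^0$ the connected component and $H=G/G^0$ the (then constant) \'etale quotient.

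\emph{Reductions.} First, linear reductivity is insensitive to base field extension: $(-)^G$ commutes with the flat base change $\otimes_k\overline{k}$ (being a kernel), and a complex of representations is exact iff it is so after $\otimes_k\overline{k}$; so condition (a) holds for $G$ iff it holds for $G_{\overline{k}}$, and we may assume $k=\overline{k}$. Then every finite \'etale $k$-group scheme is constant, and "diagonalizable" means a product of various $\mu_{n_i}$. Next I would record two standard stability facts: $(i)$ a closed subgroup scheme, or a quotient group scheme, of a linearly reductive finite group scheme is linearly reductive --- for quotients because $\Rep(G/N)\subseteq\Rep(G)$ is a full subcategory closed under subquotients, and for a subgroup $H\le G$ because $\mathrm{Ind}_H^G$ is exact ($G/H$ being affine) and a $G$-equivariant splitting of $\mathrm{Ind}_H^G$ applied to a surjection restricts, via the counit, to an $H$-equivariant one; $(ii)$ an extension $1\to N\to G\to Q\to 1$ of a linearly reductive $Q$ by a linearly reductive $N$ is linearly reductive, since $V\mapsto V^G=(V^N)^Q$ is the composite of the exact functors $(-)^N$ and $(-)^Q$ (using that $N$ is normal). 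Finally, as $k$ is perfect the connected-\'etale sequence $1\to G^0\to G\to G/G^0\to 1$ splits, so $G\cong G^0\rtimes (G/G^0)$; by $(i)$ and $(ii)$, $G$ is linearly reductive iff both $G^0$ and $G/G^0$ are. This reduces the theorem to two claims: a finite constant group is linearly reductive iff its order is prime to $p$; and a finite connected $k$-group scheme is linearly reductive iff it is diagonalizable.

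\emph{The \'etale case and the shape of the connected case.} For a finite constant group $\Gamma$: if $p\nmid\#\Gamma$ then $k[\Gamma]$ is semisimple by Maschke, so $\Rep(\Gamma)$ is semisimple; if $p\mid\#\Gamma$ then $\Gamma$ has a subgroup $\Z/p$, and $k[\Z/p]\cong k[t]/(t^p-1)=k[s]/(s^p)$ is a nonzero local Artinian ring, hence not semisimple, so $\Z/p$ --- and therefore $\Gamma$ by $(i)$ --- is not linearly reductive. For the connected case I would prove that linear reductivity of $G^0$ forces every simple object of $\Rep(G^0)$ to be $1$-dimensional; granting this, $\Rep(G^0)$ is then generated under direct sums by characters, so the characters of $G^0$ separate points and the homomorphism $G^0\to\G_m^N$ assembled from all of them is a closed immersion, whence $G^0$ is diagonalizable (the converse being clear, as representations of diagonalizable groups are sums of characters).

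\emph{Simples of a connected linearly reductive $G^0$ are $1$-dimensional.} I would argue by induction on $\#G^0$. If $G^0$ has height $\le 1$, it corresponds to the restricted $p$-Lie algebra $\mathfrak{g}=\mathrm{Lie}(G^0)$ and $\Rep(G^0)$ is the category of finite modules over the restricted enveloping algebra $u(\mathfrak{g})$; linear reductivity means $u(\mathfrak{g})$ is semisimple, which by the classical theory of restricted Lie algebras forces $\mathfrak{g}$ to be a torus (abelian with bijective $p$-operation) --- the only obstruction being $\mathrm{Lie}(\alpha_p)$, i.e.\ $\alpha_p$ itself, which is not linearly reductive since the evident extension $0\to k\to V\to k\to 0$ of $\alpha_p$-representations coming from its coaction does not split --- so $G^0\cong\mu_p^{\oplus m}$ and its simples are characters. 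If $G^0$ has height $\ge 2$, let $N=\Ker(F_{G^0}^{(1)}\colon G^0\to (G^0)^{(1)})$ be its first Frobenius kernel, a nontrivial normal subgroup scheme of height $1$; it is linearly reductive by $(i)$, hence $N\cong\mu_p^m$ by the previous paragraph. Since $\underline{\Aut}(\mu_p^m)\cong\underline{\GL_m(\F_p)}$ is \'etale while $G^0$ is connected, the conjugation homomorphism $G^0\to\underline{\Aut}(N)$ is trivial, i.e.\ $N$ is central in $G^0$. Given a simple $V\in\Rep(G^0)$, decompose $V|_N$ into weight spaces; centrality of $N$ makes each weight space $G^0$-stable, so by simplicity $N$ acts on $V$ through a single character $\chi$, and $V\otimes\chi^{-1}$ is then a simple representation of $Q=G^0/N$, which has strictly smaller order and is linearly reductive by $(i)$; by induction $\dim(V\otimes\chi^{-1})=1$, so $\dim V=1$.

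Assembling the three parts yields both directions. The main obstacle is the connected case, and inside it the height-$1$ base case, which rests on the nontrivial fact that a restricted enveloping algebra is semisimple exactly when the $p$-Lie algebra is toral; conceptually the entire argument reduces to isolating $\alpha_p$ (for the connected part) and $\Z/p$ (for the \'etale part) as the sole obstructions to linear reductivity, and to propagating the shape $\mu_p^m$ up the Frobenius-kernel filtration using that the automorphism scheme of $\mu_p^m$ is \'etale.
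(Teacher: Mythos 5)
The paper does not actually prove this proposition --- it is quoted from Abramovich--Olsson--Vistoli \cite[Proposition 2.13]{aov08} with no argument given --- so your proof has to be judged on its own terms. Your overall dévissage (base change to $\overline{k}$, stability of linear reductivity under subgroups, quotients and extensions, the split connected--étale sequence, Maschke plus $\Z/p$ for the étale part, and reduction of the connected part to ``all simples are $1$-dimensional'') is the right skeleton, and the height-one base case is acceptable if one grants Hochschild's theorem that $u(\mathfrak{g})$ is semisimple exactly when $\mathfrak{g}$ is a torus (that citation is carrying the real weight there, as you note).

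There is, however, a genuine gap in the height $\ge 2$ induction step. You let $N=\Ker F^{(1)}_{G^0}\cong\mu_p^m$ be central, observe that a simple $V$ restricts to a single character $\chi$ of $N$, and then pass to ``$V\otimes\chi^{-1}$, a simple representation of $Q=G^0/N$''. But $\chi$ is only a character of $N$; the tensor product $V\otimes\chi^{-1}$ is not a $G^0$-module unless $\chi$ extends to a character of $G^0$, and no such extension is available at this point --- producing one is essentially equivalent to what you are trying to prove (for commutative $G^0$ it would follow from exactness of Cartier duality, but commutativity of $G^0$ is part of the conclusion, not a hypothesis). The standard way around this is to work with $\End(V)=V\otimes V^{*}$, on which $N$ acts trivially, so that by the inductive hypothesis $\End(V)$ is a direct sum of characters of $Q$ with $\End(V)^{G^0}=k$ by Schur; one is then facing a grading of the matrix algebra by the finite abelian $p$-group $\X(Q)$ with one-dimensional identity component, equivalently a homomorphism from the connected multiplicative group $\Diag(\X(Q))$ to ${\rm PGL}(V)$ whose lift to $\GL(V)$ must be analysed (vanishing of the commutator pairing by connectedness versus étaleness, plus stability of multiplicative type under central extensions by $\G_m$). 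Some argument of this kind --- a lifting/Schur-multiplier statement --- is needed to force $\dim V=1$; the twist as written skips it, so the connected case is not yet proved.
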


Here a finite group scheme $G$ is said to be \textit{diagonalizable} if it is abelian and its Cartier dual is constant~(cf.~\cite[Section 2.2]{wa79}). Proposition \ref{prop:aov} does not require the assumption that $k$ is perfect.

\begin{rem}
Assume that $k$ is an algebraically closed field of characteristic $p>0$. From Proposition \ref{prop:aov}, we can deduce that if a finite \'etale group scheme $G$ is linearly reductive if and only if $p\nmid\#G(k)$ and that a finite local $k$-group scheme is linearly reductive if and only if $G\overset{\simeq}{\to}\Diag(\X(G))$ with $\X(G)=\Hom(G,\G_m)$ a $p$-group.
\end{rem}

\begin{definition}(cf.~\cite[Section 10]{bv15})
We denote by $\pi^{\lr}(X,x)$ the maximal linearly reductive quotient of $\pi^N(X,x)$. 
\end{definition}

\begin{rem}
(1) In \cite{bv15}, Borne-Vistoli studied the linearly reductive quotient in terms of \textit{fundamental gerbes}. They called it the \textit{tame fundamental gerbe}.
Here, the word ``tame'' stems from the notion of \textit{tame stacks}~(cf.~\cite{aov08}).\\
(2) If $k$ is of characteristic zero, then any finite $k$-group scheme is linearly reductive and $\pi^N(X,x)=\pi^{\lr}(X,x)$.
\end{rem}

From now on assume that $k$ is of positive characteristic $p>0$. Then a finite $k$-group scheme $G$ is said to be \textit{local} if it is connected. We denote by $\pi^{\loc}(X,x)$ the maximal local quotient of $\pi^N(X,x)$. 
The arguments in \cite{un10} then imply that $\pi^{\loc}(X,x)$ does not depend on the choice of a rational point $x\in X(k)$. More precisely, let  $L(X)$ be the category of pairs $(P,G)$ where $G$ is a finite local $k$-group scheme and $P\to X$ is a $G$-torsor. Then the projective limit
\begin{equation*}
\varprojlim_{(P,G)\in L(X)}(P,G)
\end{equation*}
exists and for any $x\in X(k)$, there exists a canonical isomorphism
\begin{equation*}
\pi^{\loc}(X,x)\simeq\varprojlim_{(P,G)\in L(X)}G.
\end{equation*} 
In particular, for any finite local $k$-group scheme $G$, the map in Proposition \ref{prop:univ} induces a bijection:
\begin{equation}\label{eq:hom-coh}
\Hom_k(\pi^{\loc}(X,x),G)\xrightarrow{\simeq} H^1_{{\rm fpqc}}(X,G).
\end{equation}
Hence, we write simply $\pi^{\loc}(X)$ instead of $\pi^{\loc}(X,x)$.

If $k$ is algebraically closed, then the maximal linearly reductive quotient $\pi^{\loc}(X)^{\lr}$ is diagonalizable and we have:
\begin{equation*}
\pi^{\loc}(X)^{\lr}=\Diag(\X(\pi^{\loc}(X))).
\end{equation*}
Here $\X(\pi^{\loc}(X))=\Hom(\pi^{\loc}(X),\G_m)$, the group of characters of $\pi^{\loc}(X)$ and for any abelian group $A$, we denote by $\Diag(A)$ the diagonalizable $k$-group scheme associated with $A$~\cite[Section 2.2]{wa79}. On the other hand, any homomorphism $\pi^{\loc}(X)\to\G_m$ factors through $\mu_{p^n}=\Diag(\Z/p^n\Z)$ for some integer $n>0$. Hence,
\begin{equation*}
\X(\pi^{\loc}(X))=\varinjlim_{n>0}\Hom_k(\pi^{\loc}(X),\mu_{p^n})\xrightarrow{\simeq}\varinjlim_{n>0}H^1_{\fpqc}(X,\mu_{p^n}).
\end{equation*}
Here, since all the $\mu_{p^n}$ are abelian, the map in Proposition \ref{prop:univ} induces the last isomorphism.
Hence, we have seen that the following holds:

\begin{prop}\label{prop:max loc lr}
If $k$ is an algebraically closed field of characteristic $p>0$, then there exists a canonical isomorphism:
\begin{equation*}
\pi^{\loc}(X)^{\lr}\simeq\varprojlim_{n>0}\Diag(H_{\fpqc}^1(X,\mu_{p^n})).
\end{equation*}
\end{prop}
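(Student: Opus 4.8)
The plan is to assemble the chain of identifications that the discussion preceding the statement has already set up; the genuine inputs are only (i) the structure of linearly reductive local group schemes over an algebraically closed field, (ii) the fact that every character of the pro-local group scheme $\pi^{\loc}(X)$ factors through some $\mu_{p^n}$, and (iii) the contravariant equivalence $A\mapsto\Diag(A)$ between abelian groups and diagonalizable (pro-)group schemes, under which filtered colimits go to filtered inverse limits.

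First I would show that $\pi^{\loc}(X)^{\lr}$ is diagonalizable and compute its character group. Writing $\pi^{\loc}(X)=\varprojlim_{(P,G)\in L(X)}G$ with each $G$ finite local, the maximal linearly reductive quotient is the inverse limit of the maximal linearly reductive quotients of the finite local quotients $G$; by the Remark following Proposition \ref{prop:aov}, over the algebraically closed field $k$ each of these is $\Diag(\X(G))$ with $\X(G)$ a finite abelian $p$-group. An inverse limit of finite diagonalizable group schemes is diagonalizable, so $\pi^{\loc}(X)^{\lr}=\Diag(M)$ where $M=\X(\pi^{\loc}(X))=\Hom(\pi^{\loc}(X),\G_m)$ is a $p$-torsion abelian group, and moreover $M=\varinjlim\X(G)$ over the same cofiltered system.

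Next I would identify $M$ cohomologically. Since $\G_m$ is of finite type and $\pi^{\loc}(X)$ is a filtered inverse limit of finite group schemes, any homomorphism $\pi^{\loc}(X)\to\G_m$ factors through a finite local $G$, hence has finite local image, which over a perfect field is contained in $\mu_{p^n}$ for some $n>0$; thus $M=\varinjlim_{n>0}\Hom_k(\pi^{\loc}(X),\mu_{p^n})$. For each $n$ the group scheme $\mu_{p^n}$ is finite local and abelian, so the bijection \eqref{eq:hom-coh} furnished by Proposition \ref{prop:univ} gives $\Hom_k(\pi^{\loc}(X),\mu_{p^n})\simeq H^1_{\fpqc}(X,\mu_{p^n})$, naturally in $n$; passing to the colimit, $M\simeq\varinjlim_{n>0}H^1_{\fpqc}(X,\mu_{p^n})$. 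Applying $\Diag(-)$ and using that it turns this filtered colimit of abelian groups into the filtered inverse limit of the associated diagonalizable group schemes yields
\begin{equation*}
\pi^{\loc}(X)^{\lr}=\Diag(M)\simeq\Diag\Bigl(\varinjlim_{n>0}H^1_{\fpqc}(X,\mu_{p^n})\Bigr)\simeq\varprojlim_{n>0}\Diag\bigl(H^1_{\fpqc}(X,\mu_{p^n})\bigr),
\end{equation*}
which is the assertion.

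The step demanding care — rather than a deep obstacle — is the first one: that the formation of the maximal linearly reductive quotient commutes with the defining inverse limit of $\pi^{\loc}(X)$, and the attendant fact that a pro-(finite diagonalizable) group scheme is itself diagonalizable. Once that is granted, the rest is bookkeeping inside an anti-equivalence of categories. One should also note that the passage from homomorphisms to cohomology in \eqref{eq:hom-coh} is genuinely a classification by $H^1_{\fpqc}$ (not by \emph{pointed} torsors): this is precisely why one restricts to the abelian, here diagonalizable, coefficient groups $\mu_{p^n}$, for which the choice of base point is immaterial.
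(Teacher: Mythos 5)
Your proposal is correct and follows essentially the same route as the paper: identify $\pi^{\loc}(X)^{\lr}$ with $\Diag(\X(\pi^{\loc}(X)))$ using the structure of local linearly reductive group schemes over an algebraically closed field, note that every character factors through some $\mu_{p^n}$, convert $\Hom_k(\pi^{\loc}(X),\mu_{p^n})$ into $H^1_{\fpqc}(X,\mu_{p^n})$ via the bijection \eqref{eq:hom-coh}, and apply $\Diag(-)$ to turn the filtered colimit into the inverse limit. Your extra remarks (passing the lr quotient through the defining inverse limit, and the irrelevance of base points for abelian coefficients) only make explicit what the paper leaves implicit.
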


Note that $\pi^{\loc}(X)^{\lr}$ is nothing but the maximal local linearly reductive quotient of $\pi^N(X,x)$.


\section{A purely inseparable analogue of the Abhyankar conjecture}\label{sec:a purely insep}

Let $k$ be an algebraically closed field of positive characteristic $p>0$. Let $X$ be a projective smooth curve over $k$ of genus $g\ge 0$. Let $U$ be a nonempty open subset of $X$ with $n\Def\# (X\setminus U)> 0$. The scheme $U$ is then an affine smooth curve over $k$. We denote by $\gamma$ the $p$-\textit{rank} of the Jacobian variety ${\rm Pic}_X^0$ of $X$, i.e, 
\begin{equation*}
\gamma\Def\Dim_{\F_p}{\rm Pic}^0_{X}[p](k).
\end{equation*}
Since $X$ is smooth and projective, the invariant $\gamma$ coincides with the dimension of the $\F_p$-vector space $\Hom_{\Z}(\pi_1^{\et}(X)^{\ab},\F_p)$~(cf.~\cite{bouw00}). Moreover, in this case, for any integer $m>0$, we have
\begin{equation}\label{eq:isom mu_p^m}
H^1_{\fpqc}(X,\mu_{p^m})\simeq{\rm Pic}_X^0[p^m](k)\simeq(\Z/p^m\Z)^{\oplus\gamma}.
\end{equation}
Here, for the first equality, see, for example, \cite[Proposition 3.2]{an11}; for the second equality, see, for example, \cite[Chapter IV]{mu08}.

Let $\pi_A^{\loc}(U)$ be the set of isomorphism classes of  finite local $k$-group schemes $G$ such that there exists a surjective homomorphism $\pi^{\loc}(U)\twoheadrightarrow G$.

\subsection{Question}\label{sec:nece cond}

We first give a necessary condition for a finite local $k$-group scheme $G$ to belong to the set $\pi^{\loc}_A(U)$:

\begin{prop}\label{prop:necessarily cond}
For any finite local $k$-group scheme $G$, if $G\in\pi^{\loc}_A(U)$, then there exists an injective homomorphism $\X(G)\hookrightarrow (\Q_p/\Z_p)^{\oplus \gamma+n-1}$.
\end{prop}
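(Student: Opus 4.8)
The plan is to reduce the statement to a computation of $\varinjlim_{m} H^1_{\fpqc}(U,\mu_{p^m})$ together with the bijection $\X(\pi^{\loc}(U))\simeq\varinjlim_m H^1_{\fpqc}(U,\mu_{p^m})$ coming from Proposition~\ref{prop:max loc lr}. Indeed, if $G\in\pi^{\loc}_A(U)$, choose a surjection $\pi^{\loc}(U)\twoheadrightarrow G$; applying $\X(-)=\Hom(-,\G_m)$ gives an injection $\X(G)\hookrightarrow\X(\pi^{\loc}(U))$, since a surjection of affine group schemes is a faithfully flat (hence epi) morphism and $\G_m$ is affine. So it suffices to show that $\X(\pi^{\loc}(U))$, equivalently $\varinjlim_m H^1_{\fpqc}(U,\mu_{p^m})$, embeds into $(\Q_p/\Z_p)^{\oplus\gamma+n-1}$; in fact I expect equality, $\varinjlim_m H^1_{\fpqc}(U,\mu_{p^m})\simeq(\Q_p/\Z_p)^{\oplus\gamma+n-1}$, and the statement follows a fortiori.

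The core of the argument is therefore to compute $H^1_{\fpqc}(U,\mu_{p^m})$ for each $m>0$. The Kummer sequence $1\to\mu_{p^m}\to\G_m\xrightarrow{p^m}\G_m\to 1$ (exact for the fpqc topology) gives
\begin{equation*}
0\to\calO(U)^\times/(\calO(U)^\times)^{p^m}\to H^1_{\fpqc}(U,\mu_{p^m})\to\Pic(U)[p^m]\to 0.
\end{equation*}
Since $U$ is a smooth affine curve over the algebraically closed field $k$, with smooth projective model $X$ and $U=X\setminus\{x_1,\dots,x_n\}$, we have $\Pic(U)$ a quotient of $\Pic^0(X)=\Pic^0_X(k)$, so $\Pic(U)[p^m]$ is a quotient of $\Pic^0_X[p^m](k)\simeq(\Z/p^m\Z)^{\oplus\gamma}$ by (\ref{eq:isom mu_p^m}); and $\calO(U)^\times/k^\times$ is free abelian of rank $n-1$, generated by rational functions with divisors supported on $\{x_1,\dots,x_n\}$ subject to the single relation that the total degree is zero (here one uses that $k$ is algebraically closed so every such divisor class of degree zero is principal is \emph{not} quite right — rather, the divisor-of-a-function map has image the degree-zero divisors supported on the punctures modulo those coming from $\Pic^0(X)$, so a clean way is the excision/localization sequence for $\Pic$). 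Taking $p^m$-torsion (note $\calO(U)^\times$ has no $p$-torsion since $k^\times$ is torsion prime to $p$) yields $\calO(U)^\times/(\calO(U)^\times)^{p^m}\simeq(\Z/p^m\Z)^{\oplus n-1}$. Hence $H^1_{\fpqc}(U,\mu_{p^m})$ is an extension of a subgroup of $(\Z/p^m\Z)^{\oplus\gamma}$ by $(\Z/p^m\Z)^{\oplus n-1}$, so it embeds into $(\Z/p^m\Z)^{\oplus\gamma+n-1}$; passing to the colimit over $m$ (the transition maps being compatible with the Kummer sequences) gives the desired injection into $(\Q_p/\Z_p)^{\oplus\gamma+n-1}$.

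The step I expect to be the main obstacle is making the bookkeeping of the colimit precise: one must check that the transition maps $H^1_{\fpqc}(U,\mu_{p^m})\to H^1_{\fpqc}(U,\mu_{p^{m+1}})$ induced by $\mu_{p^m}\hookrightarrow\mu_{p^{m+1}}$ are compatible with the two Kummer sequences in a way that identifies $\varinjlim_m$ of the outer terms with $(\Q_p/\Z_p)^{\oplus n-1}$ and $\varinjlim_m \Pic(U)[p^m]$ with a subgroup of $(\Q_p/\Z_p)^{\oplus\gamma}$; this requires noting that the map $\calO(U)^\times/(\calO(U)^\times)^{p^m}\to\calO(U)^\times/(\calO(U)^\times)^{p^{m+1}}$ is multiplication-by-$p$ on the relevant finite groups, so the colimit of $(\Z/p^m\Z)^{\oplus n-1}$ along these maps is indeed $(\Q_p/\Z_p)^{\oplus n-1}$, and similarly on the torsion of $\Pic$. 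Since colimits are exact, the resulting sequence
\begin{equation*}
0\to(\Q_p/\Z_p)^{\oplus n-1}\to\varinjlim_m H^1_{\fpqc}(U,\mu_{p^m})\to\varinjlim_m\Pic(U)[p^m]\to 0
\end{equation*}
exhibits $\X(\pi^{\loc}(U))$ as an extension of a subgroup of $(\Q_p/\Z_p)^{\oplus\gamma}$ by $(\Q_p/\Z_p)^{\oplus n-1}$, which (as any such extension of $p$-divisible torsion groups) embeds into $(\Q_p/\Z_p)^{\oplus\gamma+n-1}$, completing the proof. (Alternatively, one can avoid colimit gymnastics by working directly: a surjection $\pi^{\loc}(U)\twoheadrightarrow G$ with $G$ local factors its maximal diagonalizable quotient through $\Diag(\X(G))$, and $\X(G)$ is a finite $p$-group, so it suffices to bound $\Hom(\pi^{\loc}(U),\mu_{p^m})\simeq H^1_{\fpqc}(U,\mu_{p^m})$ for a single large $m$ with $p^m\X(G)=0$, which is exactly the finite-level computation above.)
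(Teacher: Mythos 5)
Your reduction steps are sound and are essentially the ones the paper uses: a surjection $\pi^{\loc}(U)\twoheadrightarrow G$ induces an injection $\X(G)\hookrightarrow\X(\pi^{\loc}(U))\simeq\varinjlim_m H^1_{\fpqc}(U,\mu_{p^m})$, and the splitting/colimit bookkeeping you worry about is harmless (your final parenthetical remark already disposes of it, since $\X(G)$ is finite). The genuine gap is in your evaluation of the two outer terms of the Kummer sequence. Both key claims are false in general: $\calO(U)^\times/k^\times$ is the kernel of the map $\{\text{degree-zero divisors supported on }x_1,\dots,x_n\}\to{\rm Pic}^0(X)$, hence free of some rank $r\le n-1$, and $r<n-1$ as soon as the classes $[x_i]-[x_1]$ generate an infinite subgroup of ${\rm Pic}^0(X)$; correspondingly, ${\rm Pic}(U)\simeq{\rm Pic}^0(X)/\langle [x_i]-[x_1]\rangle$ is a quotient of a divisible group by a possibly non-torsion subgroup, and the $p^m$-torsion of a quotient is not a quotient (nor a subgroup) of the torsion, so ${\rm Pic}(U)[p^m]$ can be strictly larger than $(\Z/p^m\Z)^{\oplus\gamma}$. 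Concretely, take $X$ an ordinary elliptic curve ($\gamma=1$) and $U=X\setminus\{x_1,x_2\}$ with $[x_1]-[x_2]$ of infinite order: then $\calO(U)^\times=k^\times$, so your left-hand term is $0$ rather than $\Z/p^m\Z$, while ${\rm Pic}(U)[p^m]$ has order $p^{2m}$ rather than $p^m$. The two failures offset each other exactly, but that compensation is precisely what has to be proved; your parenthetical about the localization sequence acknowledges the problem and then the proof simply asserts the false isomorphism $\calO(U)^\times/(\calO(U)^\times)^{p^m}\simeq(\Z/p^m\Z)^{\oplus n-1}$, so the bound $H^1_{\fpqc}(U,\mu_{p^m})\hookrightarrow(\Z/p^m\Z)^{\oplus\gamma+n-1}$ is not established as written.

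For comparison, the paper sidesteps the Kummer sequence altogether: it describes $\mu_{p^m}$-torsors on $U$ as pairs $(L,\alpha:L^{\otimes p^m}\simeq\calO_U)$, uses ${\rm Pic}(U)\simeq{\rm Pic}(X)/R$ to rewrite them as triples on $X$ with a divisor supported on the punctures, and obtains the exact sequence
\begin{equation*}
0\to H^1_{\fpqc}(X,\mu_{p^m})\to H^1_{\fpqc}(U,\mu_{p^m})\to\bigoplus_{i=1}^n\Z/p^m\Z\xrightarrow{\Sigma}\Z/p^m\Z\to 0,
\end{equation*}
which together with $H^1_{\fpqc}(X,\mu_{p^m})\simeq(\Z/p^m\Z)^{\oplus\gamma}$ gives $H^1_{\fpqc}(U,\mu_{p^m})\simeq(\Z/p^m\Z)^{\oplus\gamma+n-1}$ on the nose. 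Your Kummer route can be repaired, e.g.\ by feeding the localization sequence $0\to\calO(U)^\times/k^\times\to\bigoplus_i\Z x_i\to{\rm Pic}(X)\to{\rm Pic}(U)\to 0$ into a snake-lemma count showing that the rank deficit $n-1-r$ in the unit group reappears as extra $p^m$-torsion of ${\rm Pic}(U)$ of corank exactly $n-1-r$, but that finite-level computation — not the colimit — is the real content, and it is missing.
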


By the virtue of Proposition \ref{prop:max loc lr}, Proposition \ref{prop:necessarily cond} is an immediate consequence of the following:

\begin{prop}
For any integer $m>0$, we have
\begin{equation*}
H^1_{\fpqc}(U,\mu_{p^m})\simeq (\Z/p^m\Z)^{\oplus \gamma+n-1}.
\end{equation*}
\end{prop}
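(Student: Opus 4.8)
The plan is to compute $H^1_{\fpqc}(U,\mu_{p^m})$ via the Kummer sequence and the known structure of $\calO^*$ and $\mathrm{Pic}$ on an affine curve, comparing with the projective case where the answer $(\Z/p^m\Z)^{\oplus\gamma}$ is already recorded in (\ref{eq:isom mu_p^m}). First I would write down the Kummer exact sequence $1\to\mu_{p^m}\to\G_m\xrightarrow{p^m}\G_m\to 1$ of fpqc sheaves on $U$; since $U$ is a smooth (hence normal, integral) affine curve, this gives
\begin{equation*}
0\to\calO(U)^*/(\calO(U)^*)^{p^m}\to H^1_{\fpqc}(U,\mu_{p^m})\to\mathrm{Pic}(U)[p^m]\to 0.
\end{equation*}
Here I use that $H^1_{\fpqc}(U,\G_m)=H^1_{\et}(U,\G_m)=\mathrm{Pic}(U)$ by fpqc descent for the smooth group $\G_m$ (Grothendieck's theorem). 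So the computation reduces to understanding $\calO(U)^*$ modulo $p^m$-th powers and the $p^m$-torsion of $\mathrm{Pic}(U)$.

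Next I would bring in the open immersion $j:U\hookrightarrow X$ and the complementary finite set $S=X\setminus U$ of $n$ closed points. There is the divisor exact sequence
\begin{equation*}
0\to\calO(X)^*\to\calO(U)^*\to\bigoplus_{x\in S}\Z\xrightarrow{\mathrm{div}}\mathrm{Pic}(X)\to\mathrm{Pic}(U)\to 0,
\end{equation*}
coming from removing the codimension-one points in $S$. Since $X$ is projective and geometrically connected over the algebraically closed field $k$, $\calO(X)^*=k^*$, which is $p^m$-divisible (as $k$ is perfect, indeed algebraically closed), so $k^*/(k^*)^{p^m}=0$. The map $\bigoplus_{x\in S}\Z\to\mathrm{Pic}(X)$ lands in $\mathrm{Pic}^0$ after composing with degree only on the kernel of total degree; more precisely, the total-degree map $\bigoplus_{x\in S}\Z\to\Z$ is surjective (because $n>0$, so $S$ is nonempty and we may take a point of degree $1$), and its restriction to the kernel of $\mathrm{div}$... — the cleanest bookkeeping is: the image of $\mathrm{div}$ is $\mathrm{Pic}^0(X)$ intersected with the subgroup generated by classes of points of $S$, and $\mathrm{Pic}(U)\cong\mathrm{Pic}(X)/(\text{that image})$. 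Either way, one extracts a short exact sequence
\begin{equation*}
0\to\Z^{\oplus n-1}\to\calO(U)^*/k^*\to(\text{torsion-free quotient, rank }0)\to 0
\end{equation*}
showing $\calO(U)^*\cong k^*\oplus\Z^{\oplus n-1}$ as abstract groups (the free part has rank $n-1$ because one relation, total degree, is consumed matching against $\deg:\mathrm{Pic}(X)\to\Z$ — here I should be careful and may instead argue $\calO(U)^*\otimes\Z/p^m\Z\cong(\Z/p^m\Z)^{\oplus n-1}$ directly), and simultaneously that $\mathrm{Pic}(U)$ is an extension of a finite group by a divisible group, with $\mathrm{Pic}(U)[p^m]$ differing from $\mathrm{Pic}(X)[p^m]=\mathrm{Pic}^0(X)[p^m]\cong(\Z/p^m\Z)^{\oplus\gamma}$ only through the $n$ points of $S$. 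The key point is that the subgroup of $\mathrm{Pic}(X)$ generated by $S$ meets $\mathrm{Pic}^0(X)[p^m]$ trivially for a well-chosen argument — or, better, use the snake lemma comparing multiplication by $p^m$ on the whole divisor sequence — to conclude $\mathrm{Pic}(U)[p^m]\cong\mathrm{Pic}^0(X)[p^m]\cong(\Z/p^m\Z)^{\oplus\gamma}$.

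Assembling the two displays, the Kummer sequence for $U$ becomes
\begin{equation*}
0\to(\Z/p^m\Z)^{\oplus n-1}\to H^1_{\fpqc}(U,\mu_{p^m})\to(\Z/p^m\Z)^{\oplus\gamma}\to 0,
\end{equation*}
and since $\Z/p^m\Z$ is an injective module over itself... it is not injective as an abelian group, so the extension need not split a priori — this is where I expect the main obstacle. To kill it I would either (i) observe that $H^1_{\fpqc}(U,\mu_{p^m})$ is naturally a module over $\Z/p^m\Z$ which is killed by $p^m$, and show the sub and quotient are both free of the stated ranks so that the extension is between free $\Z/p^m\Z$-modules — still not automatically split over $\Z/p^m\Z$ unless one invokes that the sequence is compatible for varying $m$ and passes to the limit $H^1(U,\mu_{p^\infty})\cong(\Q_p/\Z_p)^{\oplus\gamma+n-1}$ from which the finite-level statement follows by taking $p^m$-torsion; or (ii) exhibit an explicit splitting: the $\Z^{\oplus n-1}$ inside $\calO(U)^*$ comes from honest global units/functions on $U$, and the corresponding $\mu_{p^m}$-torsors $\{y^{p^m}=f\}$ give a canonical lift of the sub into $H^1$, while a section of the surjection onto $\mathrm{Pic}(U)[p^m]\cong\mathrm{Pic}^0(X)[p^m]$ is furnished by pulling back the $\mu_{p^m}$-torsors on $X$ under $j$ (using $H^1_{\fpqc}(X,\mu_{p^m})\cong\mathrm{Pic}^0(X)[p^m]$ from (\ref{eq:isom mu_p^m}), i.e. on $X$ the unit contribution vanishes so the Kummer class \emph{is} the Picard class, and $j^*$ is a section). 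Route (ii) is cleanest and avoids any $\mathrm{Ext}^1$ subtlety, so that is what I would write up: the Kummer sequence on $U$ together with the two canonical maps (from global units, and from $j^*$ applied to the isomorphism on $X$) directly produces the decomposition $H^1_{\fpqc}(U,\mu_{p^m})\cong(\Z/p^m\Z)^{\oplus n-1}\oplus(\Z/p^m\Z)^{\oplus\gamma}\cong(\Z/p^m\Z)^{\oplus\gamma+n-1}$.
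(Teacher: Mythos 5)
Your overall frame (Kummer sequence for $\mu_{p^m}$ plus the divisor sequence relating $U$ and $X$) is workable, but the two intermediate computations you rest on are false in general, and this is a genuine gap rather than a bookkeeping issue. You claim $\calO(U)^*\cong k^*\oplus\Z^{\oplus n-1}$ and $\mathrm{Pic}(U)[p^m]\cong\mathrm{Pic}^0_X[p^m]\cong(\Z/p^m\Z)^{\oplus\gamma}$. Both implicitly assume that the only relation among the classes $\calO_X(x_i)$ in $\mathrm{Pic}(X)$ is the degree relation, i.e.\ that $\ker\bigl(\mathrm{div}:\bigoplus_{x\in S}\Z\to\mathrm{Pic}(X)\bigr)$ has rank $n-1$; this is true for $g=0$ but fails for punctures in general position on a curve of positive genus. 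Concretely, let $X$ be an ordinary elliptic curve and $S=\{O,P\}$ with $[P]-[O]$ non-torsion: then $\calO(U)^*=k^*$ (unit rank $0$, not $n-1=1$), while $\mathrm{Pic}(U)\cong\mathrm{Pic}^0(X)/\langle[P]-[O]\rangle$ has $\mathrm{Pic}(U)[p^m]\cong(\Z/p^m\Z)^{\oplus\gamma+1}$. Conversely, if $P$ is a nontrivial $p^m$-torsion point, the subgroup of $\mathrm{Pic}(X)$ generated by $S$ meets $\mathrm{Pic}^0_X[p^m]$ nontrivially, contradicting your ``key point'', and the composite $H^1_{\fpqc}(X,\mu_{p^m})\xrightarrow{j^*}H^1_{\fpqc}(U,\mu_{p^m})\to\mathrm{Pic}(U)[p^m]$ is not the section you want (it is not injective in that case, and not surjective in the non-torsion case). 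In general the two graded pieces of your Kummer sequence are $(\Z/p^m\Z)^{\oplus n-r}$ and $(\Z/p^m\Z)^{\oplus\gamma+r-1}$, where $r$ is the rank of the subgroup of $\mathrm{Pic}(X)$ generated by the punctures; only their sum $\gamma+n-1$ is independent of the configuration. So the write-up you propose (your route (ii)) breaks precisely at the steps you flagged as needing care.

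The proposition itself is of course true, and your argument can be repaired by keeping track of $r$ as above; but note that the paper sidesteps the separation altogether. It identifies $H^1_{\fpqc}(U,\mu_{p^m})$ with triples $(\overline{L},D,\overline{\alpha})$, where $\overline{L}\in\mathrm{Pic}(X)$, $D$ is supported on $X\setminus U$ and $\overline{\alpha}:\overline{L}^{\otimes p^m}\xrightarrow{\simeq}\calO_X(D)$, modulo the evident relations, and extracts the four-term exact sequence
\begin{equation*}
0\to H^1_{\fpqc}(X,\mu_{p^m})\to H^1_{\fpqc}(U,\mu_{p^m})\to\bigoplus_{i=1}^n\Z/p^m\Z\xrightarrow{\Sigma}\Z/p^m\Z\to 0,
\end{equation*}
in which the unit and Picard contributions are mixed from the start: the sub is $(\Z/p^m\Z)^{\oplus\gamma}$ by the projective-case computation and the quotient is $\ker\Sigma\cong(\Z/p^m\Z)^{\oplus n-1}$, uniformly in the configuration of the punctures. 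Finally, the splitting issue is easier than you make it: everything in sight is a $\Z/p^m\Z$-module, and the quotient is free, hence projective, over $\Z/p^m\Z$ (equivalently, $\Z/p^m\Z$ is self-injective), so the extension splits without passing to the limit in $m$ or constructing explicit sections.
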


\begin{proof}(cf.~\cite[Tag 03RN, Lemma 53.68.3]{stack})
Let $X\setminus U=\{x_1,\dots,x_n\}$. Then there exists an isomorphism ${\rm Pic}(U)\simeq{\rm Pic}(X)/R$ with $R\Def\langle\calO_X(x_i)
\,|\,1\le i\le n\,\rangle_{\Z}\subset{\rm Pic}(X)$. Therefore,
\begin{equation*}
\begin{aligned}
H^1_{\fpqc}(U,\mu_{p^m})~&\simeq~\{(L,\alpha)\,|\,L\in{\rm Pic}(U),\alpha:L^{\otimes p^m}\xrightarrow{\simeq}\calO_U\}/\simeq\\
~&\simeq~\biggl\{(\overline{L},D,\overline{\alpha})\,\biggl |\,
\begin{aligned}
\overline{L}\in{\rm Pic}(X)&,~D\in\Z x_1+\cdots+\Z x_n,~\\
\overline{\alpha}:&\overline{L}^{\otimes p^m}\xrightarrow{\simeq}\calO_X(D)
\end{aligned}
\biggl\}/\overline{R},
\end{aligned}
\end{equation*}
where $\overline{R}$ is the group defined by 
\begin{equation*}
\overline{R}\Def\{(\calO_X(D'),p^mD',{\rm id})\,|\,D'\in\Z x_1+\cdots+\Z x_n\}.
\end{equation*}
We identify the group $H^1_{\fpqc}(U,\mu_{p^m})$ with the second one in the right hand side of the above equation.
We then obtain the following exact sequence
\begin{equation*}
0\to H^1_{\fpqc}(X,\mu_{p^m})\to H^1_{\fpqc}(U,\mu_{p^m})\to\bigoplus_{i=1}^n\Z/p^m\Z
\xrightarrow{\Sigma}\Z/p^m\Z\to 0.
\end{equation*}
Here the second map is given by $(\overline{L},\alpha)\mapsto (\overline{L},0,\alpha)$ and the third one by $(\overline{L},D,\alpha)\mapsto (\overline{a_i})_{i=1}^n$ with $D=\sum_{i=1}^n a_i x_i$. This completes the proof~(cf.~(\ref{eq:isom mu_p^m})).
\end{proof}

Considering the Abhyankar Conjecture~(cf.~Theorem \ref{thm:abhy}), the following question naturally arises:

\begin{ques}\label{ques:purely insep}
Let $G$ be a finite local $k$-group scheme.
If there exists an injective homomorphism $\X(G)\hookrightarrow (\Q_p/\Z_p)^{\oplus \gamma+n-1}$, then does the group scheme $G$ belong to the set $\pi^{\loc}_A(U)$?
\end{ques}

\subsection{Nilpotent case}

Now we will show that, for any finite local  nilpotent $k$-group scheme $G$, Question \ref{ques:purely insep} has an affirmative answer:

\begin{prop}\label{prop:nilpotent}
Let $G$ be a finite local nilpotent $k$-group scheme. Then $G\in\pi^{\loc}_A(U)$ if and only if there exists an injective homomorphism $\X(G)\hookrightarrow (\Q_p/\Z_p)^{\oplus \gamma+n-1}$.
\end{prop}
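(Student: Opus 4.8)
The ``only if'' direction is Proposition \ref{prop:necessarily cond}, so the real content is the ``if'' direction: given an injection $\X(G)\hookrightarrow(\Q_p/\Z_p)^{\oplus\gamma+n-1}$ with $G$ finite local nilpotent, produce a surjection $\pi^{\loc}(U)\twoheadrightarrow G$. The plan is to induct on the order of $G$, using the nilpotence to peel off a central subgroup scheme of order $p$, exactly in the spirit of Serre's embedding-problem method for solvable quasi-$p$-groups. Concretely, a nontrivial finite local nilpotent $k$-group scheme $G$ has nontrivial center, and since the center is again a finite local (commutative) group scheme one can find a central subgroup scheme $Z\subset G$ with $Z\simeq\alpha_p$ or $Z\simeq\mu_p$, giving a central extension
\begin{equation*}
1\to Z\to G\to \overline{G}\to 1
\end{equation*}
with $\overline{G}$ again finite local nilpotent and of smaller order. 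One then needs to check that the numerical hypothesis descends: a subgroup of $(\Q_p/\Z_p)^{\oplus d}$ has all its quotients again embeddable into $(\Q_p/\Z_p)^{\oplus d}$, so $\X(\overline{G})\hookrightarrow(\Q_p/\Z_p)^{\oplus\gamma+n-1}$ holds (note $\X(\overline{G})$ is a subgroup of $\X(G)$ when $Z=\alpha_p$, since then $\X(Z)=0$, and a subgroup of index $p$ when $Z=\mu_p$; either way it embeds). By induction there is a saturated $\overline{G}$-torsor $\overline{P}\to U$, i.e.\ a surjection $\varphi\colon\pi^{\loc}(U)\twoheadrightarrow\overline{G}$.

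It remains to lift $\varphi$ across the central extension to a surjection onto $G$. The obstruction to lifting $\varphi$ to a homomorphism $\pi^{\loc}(U)\to G$ lies in $H^2$ of $\pi^{\loc}(U)$ with coefficients in $Z$ (pulled back via $\varphi$), and given one lift the set of all lifts is a torsor under $H^1(\pi^{\loc}(U),Z)=\Hom(\pi^{\loc}(U),Z)$ because $Z$ is central — here I would use the bijection (\ref{eq:hom-coh}) identifying such homomorphisms with $H^1_{\fpqc}(U,Z)$. The two cases $Z=\mu_p$ and $Z=\alpha_p$ should be handled via Kummer theory and Artin--Schreier--Witt theory respectively: $H^1_{\fpqc}(U,\mu_p)=\calO(U)^\times/(\calO(U)^\times)^p$, which by the computation $H^1_{\fpqc}(U,\mu_{p^m})\simeq(\Z/p^m\Z)^{\oplus\gamma+n-1}$ we already control, and $H^1_{\fpqc}(U,\alpha_p)$ fits in the exact sequence $0\to\calO(U)/\calO(U)^p\to H^1(U,\alpha_p)\to H^1(U,\G_a)[F]\to 0$; on an affine curve $\calO(U)/\calO(U)^p$ and $H^1(U,\G_a)$ are (in the $\alpha_p$ case, huge, in fact infinite-dimensional) $k$-vector spaces, so there is plenty of room. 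The key point for surjectivity onto $G$: once a lift $\psi\colon\pi^{\loc}(U)\to G$ of $\varphi$ exists, either $\psi$ is already surjective (its image surjects onto $\overline{G}$ and meets $Z$, hence is all of $G$ by centrality of $Z$ in the nilpotent $G$ — one must check the image's intersection with $Z$ is nonzero), or $\psi(\pi^{\loc}(U))$ is a complement to $Z$, and then one twists $\psi$ by a suitable nonzero homomorphism $\pi^{\loc}(U)\to Z$ to correct it; the existence of such a nonzero homomorphism is exactly where the affineness ($n>0$) and the infinitude/positive-dimensionality of $H^1_{\fpqc}(U,Z)$ enter.

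I expect two places to require care. First, the \emph{existence} of the lift (vanishing or killing of the $H^2$-obstruction): here one should either argue that the relevant $H^2$ vanishes after restricting to suitable finite quotients of $\pi^{\loc}(U)$ — using that $U$ is an affine curve so its (local) fundamental group scheme is in an appropriate sense ``free-like'' with no $H^2$ — or, more robustly, realize the whole extension directly by choosing an explicit $G$-torsor over $U$ restricting to $\overline{P}$, using that the map ${\rm Tors}(G,U)\to{\rm Tors}(\overline{G},U)$ is surjective for a central extension with $Z$ one of $\mu_p,\alpha_p$ over an affine curve (a Grothendieck-topology $H^2$-vanishing statement). Second, the \emph{surjectivity} bookkeeping: controlling the image of the lifted (and possibly twisted) homomorphism, i.e.\ ensuring we land on $G$ and not on a proper subgroup scheme projecting isomorphically to $\overline G$. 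The nilpotence of $G$ is used precisely here — for a non-nilpotent $G$ a complement to a central $\mu_p$ or $\alpha_p$ could be non-normal and the twisting trick can fail — which is why, as the remarks in the introduction note, the method does not extend beyond the nilpotent case. The numerical invariant $\gamma+n-1$ is never ``used up'' in the induction because each step only shrinks $\X(G)$, so the bound $\X(G)\hookrightarrow(\Q_p/\Z_p)^{\oplus\gamma+n-1}$ is exactly what guarantees, at the base of each lifting step, that $H^1_{\fpqc}(U,Z)$ is large enough to supply the twist needed for surjectivity.
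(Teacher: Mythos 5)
Your proposal follows essentially the same route as the paper's proof: induction on the order of $G$, splitting off a central subgroup $H\simeq\alpha_p$ or $\mu_p$, lifting the surjection onto $G/H$ via the vanishing of $H^2_{\fpqc}(U,\alpha_p)$ and $H^2_{\fpqc}(U,\mu_p)$ on the affine curve, and, when the lift has image a complement (so $G\simeq H\times(G/H)$), twisting by a homomorphism to $H$ whose existence comes from $\Dim_k H^1_{\fpqc}(U,\alpha_p)=\infty$, respectively from the strict inequality $\Dim_{\F_p}\Hom_k(G/H,\mu_p)<\Dim_{\F_p}\Hom_k(G,\mu_p)\le\gamma+n-1=\Dim_{\F_p}H^1_{\fpqc}(U,\mu_p)$ forced by the product decomposition. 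The two points you flag as needing care are exactly the ones the paper settles in this way, so your sketch is correct and structurally identical to the paper's argument.
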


\begin{proof}
First we remark on $\alpha_p$-torsors over $U$. Since $U$ is affine,
\begin{equation*}
H^1_{\fpqc}(U,\G_a)=H^1(U,\mathcal{O}_U)=0,
\end{equation*}
we have:
\begin{equation*}
H^1_{\fpqc}(U,\alpha_p)\simeq \Gamma(U,\mathcal{O}_U)/\Gamma(U,\mathcal{O}_U)^p.
\end{equation*}
On the other hand, $U$ is an affine smooth integral scheme, there exists a dominant morphism  $U\to\A_k^1$, whence
\begin{equation}\label{eq:a_p}
H^1_{\fpqc}(U,\alpha_p)\hookleftarrow H^1_{\fpqc}(\mathbb{A}_k^1,\alpha_p)=\oplus_{p \nmid n}k\cdot t^n,
\end{equation}
where $t$ is the coordinate of $\mathbb{A}^1$. Furthermore, since $\alpha_p$ is simple, any nonzero element of $H^{1}_{\fpqc}(U,\alpha_p)$ corresponds to a surjective homomorphism $\pi^{\loc}(U)\twoheadrightarrow \alpha_p$~(cf.~(\ref{eq:hom-coh})).

Now let us prove the proposition. It suffices to show the `if' part. We prove this by induction on the order $\Dim_k k[G]=p^{r}~(r>0)$. From the assumption, $G$ is obtained by central extensions of $\alpha_p$ or $\mu_p$. If $\Dim_k k[G]=p$, then $G=\alpha_p$, or $=\mu_p$ and the statement is immediate from (\ref{eq:a_p}), or from the assumption. Since $G$ is a nontrivial nilpotent group scheme, the center $Z(G)$ is nontrivial. Let $H\subset Z(G)$ be a subgroup scheme of order $p$. Then we get a central extension of finite $k$-group schemes:
\begin{equation}\label{eq:central ext}
1\to H\to G\to G/H\to 1.
\end{equation}
Since $\Dim_k k[G/H]<\Dim_k k[G]$ and  $\X(G/H)\subseteq\X(G)$, by induction hypothesis, there exists a surjective homomorphism $\overline{\phi}:\pi^{\loc}(U)\to G/H$. Since $U$ is affine, $H^q_{\fpqc}(U,\G_a)=0$ if $q\neq 0$, we have $H^2_{\fpqc}(U,\alpha_p)=0$. On the other hand, $H^1_{\fpqc}(U,\G_m)={\rm Pic}(U)$ is divisible (cf.~\cite[Tag 03RN, Proof of Lemma 53.68.3]{stack}) and $H^2_{\fpqc}(U,\G_m)={\rm Br}(U)=0$, we then also have $H^2_{\fpqc}(U,\mu_p)=0$. Therefore, we find that $H^2_{\fpqc}(U,H)=0$ and the exactness  of (\ref{eq:central ext}), noticing that $H_{\fpqc}^0(U,G/H)=0$, implies that the resulting sequence
\begin{equation*}
0\to H^1_{\fpqc}(U,H)\to H^1_{\fpqc}(U,G)\to H^1_{\fpqc}(U,G/H)\to 0
\end{equation*}
is an exact sequence of pointed sets (cf.~\cite[p.~284,~Remarque 4.2.10]{gi71}). Therefore, the isomorphism (\ref{eq:hom-coh}) implies that there exists a lift $\phi:\pi^{\loc}(U)\to G$ of $\overline{\phi}$ and we obtain the following commutative diagram:
\begin{equation*}
\begin{xy}
\xymatrix{
0\ar[r]& K\ar[r]\ar[d]^{f}& \pi^{\loc}(U)\ar[r]^{\overline{\phi}}\ar[d]^{\phi}& G/H\ar[r]\ar @{=} [d]& 0\\
0\ar[r]& H\ar[r]& G\ar[r]& G/H\ar[r] & 0,
}
\end{xy}
\end{equation*}
where $K\Def\Ker(\overline{\phi})$. If $f$ is nontrivial, then it is surjective, whence so is $\phi$. Thus from now on assume $f=0$. In this case, the homomorphism $\phi$ factors through $G/H$. Thus, the central extension (\ref{eq:central ext}) is trivial, i.e., $G= H\times (G/H)$. We claim that
\begin{equation}\label{eq:nilpotent claim}
H^1_{\fpqc}(U,H)\supsetneq \Ker\bigl(\Hom_k(\pi^{\loc}(U),H)
\to\Hom_k (K,H)\bigl).
\end{equation}
If the claim (\ref{eq:nilpotent claim}) is true, then one take a $k$-homomorphism $g:\pi^{\loc}(U)\to H$ so that $g|_K\neq 0$ and the one $(g,\overline{\phi}):\pi^{\loc}(U)\to G=H\times (G/H)$ is surjective. Thus it remains to show the claim (\ref{eq:nilpotent claim}). Notice that
\begin{equation*}
\Hom_k(G/H,H)=\Ker\bigl(\Hom_k(\pi^{\loc}(U),H)
\to\Hom_k (K,H)\bigl).
\end{equation*}
Thus, if $H\simeq\alpha_p$, then the claim (\ref{eq:nilpotent claim}) follows from $\Dim_k H^1_{\fpqc}(U,\alpha_p)=\infty$. If $H\simeq \mu_p$, then the claim (\ref{eq:nilpotent claim}) follows from the following inequality:
\begin{equation*}
\Dim_{\F_p}\Hom_k(G/H,\mu_p)<\Dim_{\F_p}\Hom_k (G,\mu_p)\le\gamma+n-1=\Dim_{\F_p}H^1_{\fpqc}(U,\mu_p).
\end{equation*}
Here for the first inequality, we use $G\simeq\mu_p\times (G/H)$. This completes the proof.
\end{proof}

\begin{cor}
Every finite local unipotent $k$-group scheme appears as a finite quotient of $\pi^{\loc}(U)$.
\end{cor}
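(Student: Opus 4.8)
The plan is to deduce the Corollary directly from Proposition \ref{prop:nilpotent}, by checking that a finite local unipotent $k$-group scheme automatically satisfies both hypotheses of that proposition.

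First I would record two standard structural facts about a finite local unipotent $k$-group scheme $G$. On the one hand, $\X(G)=\Hom(G,\G_m)=1$: any homomorphism $G\to\G_m$ has image a unipotent subgroup scheme of $\G_m$, and $\G_m$, being of multiplicative type, contains no nontrivial unipotent subgroup scheme. On the other hand, $G$ is nilpotent: embedding $G$ into some $\GL_{n,k}$ and using that a unipotent group scheme acting on a nonzero finite-dimensional $k$-vector space fixes a nonzero vector, one conjugates $G$ into the group $U_n\subset\GL_{n,k}$ of upper-triangular unipotent matrices; since $U_n$ is nilpotent and subgroup schemes of nilpotent group schemes are nilpotent, $G$ is nilpotent. (Alternatively one may simply quote the structure theory of unipotent group schemes over a field.)

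Granting these, the necessarily injective zero map $1=\X(G)\hookrightarrow(\Q_p/\Z_p)^{\oplus\gamma+n-1}$ exists, so Proposition \ref{prop:nilpotent} applies and produces a surjective homomorphism $\pi^{\loc}(U)\twoheadrightarrow G$, that is, $G\in\pi^{\loc}_A(U)$. There is essentially no obstacle: all the real work sits inside Proposition \ref{prop:nilpotent}, and for unipotent $G$ only the $\alpha_p$ half of that induction is invoked, since the numerical condition on $\X(G)$ used there solely to handle $\mu_p$-subquotients is vacuous. The one point worth stating carefully is that the central subgroup scheme $H$ of order $p$ arising in the inductive step can, for unipotent $G$, be taken isomorphic to $\alpha_p$ — a nontrivial unipotent group scheme has nontrivial unipotent centre — so that the driving estimate is exactly $\Dim_k H^1_{\fpqc}(U,\alpha_p)=\infty$ from $(\ref{eq:a_p})$.
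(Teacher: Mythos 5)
Your proof is correct and is exactly the route the paper intends: the corollary is stated as an immediate consequence of Proposition \ref{prop:nilpotent}, using that a finite local unipotent $k$-group scheme is nilpotent and has $\X(G)=1$, so the embedding condition is vacuously satisfied. Your two auxiliary facts (triviality of characters of a unipotent group scheme and nilpotence via embedding into strictly upper-triangular matrices) are the standard justifications, so there is nothing to add.
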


\begin{ex}\label{ex:SL_2}
Assume $k$ is of characteristic $p=2$. In this case, the first Frobenius kernel
\begin{equation*}
\SL_{2(1)}\Def\Ker\bigl(F^{(1)}:\SL_2^{(-1)}\to \SL_2\bigl)
\end{equation*}
of the algebraic group $\SL_2$ is nilpotent. Indeed, noticing that
\begin{equation*}
\SL_{2(1)}(A)=\biggl\{
\begin{pmatrix}
a & b\\
c & d
\end{pmatrix}
\biggl |\,
\begin{aligned}
a,b,c,d\in A,\,
ad-bc=1\,\\
a^2=d^2=1,\,b^2=c^2=0\,
\end{aligned}
\biggl\}
\end{equation*}
for any $k$-algebra $A$, the maps
\begin{equation*}
\SL_{2(1)}(A)\to (\alpha_2\times\alpha_2)(A);~ \begin{pmatrix}
a & b\\
c & d
\end{pmatrix}
\mapsto (ab,cd),\quad A:\text{$k$-algebra}
\end{equation*}
then form a $k$-homomorphism $\SL_{2(1)}\to \alpha_2\times\alpha_2$, which makes the following sequence
\begin{equation}\label{eq:SL_2}
1\to\mu_2\to\SL_{2(1)}\to\alpha_2\times\alpha_2\to 1
\end{equation}
a nonsplit central extension~(cf.~\cite[Chapter 10, Exercise 3]{wa79}). In particular, $\SL_{2(1)}$ is nilpotent. From the facts  that $\X(\mu_2)=\Z/2\Z$ and that any homomorphism $\SL_{2(1)}\to\G_m$ factors through $\mu_2$, the nonsplitness of (\ref{eq:SL_2}) deduces the condition that $\X(\SL_{2(1)})=\X(\alpha_2\times\alpha_2)=1$. Therefore,  by applying Proposition \ref{prop:nilpotent}, we can conculde that there exists a surjective homomorphism $\pi^{\loc}(\mathbb{A}^1_k)\twoheadrightarrow\SL_{2(1)}$.
\end{ex}

\begin{rem}\label{rem:purely insep}
(1) In the particular case where $U=\A^1_k$, that Question \ref{ques:purely insep} is affirmative is equivalent to the assertion that any finite local $k$-group scheme $G$ with $\X(G)=1$ appears as a quotient of $\pi^{\loc}(\A^1_k)$. For example, for any integers $n,r>0$, the $r$-th Frobenius kernel $\SL_{n(r)}\Def\Ker\bigl(F^{(r)}:\SL_n^{(-r)}\to\SL_n\bigl)$ of $\SL_n$ gives such one, i.e., $\X(\SL_{n(r)})=1$~(cf.~(3) below).\\
(2) In general, if a finite $k$-group scheme $G$ is generated by all the unipotent subgroup schemes, then $G$ has no characters, i.e., $\X(G)=1$. The author expects that the converse might be true, namely, that $\X(G)=1$ if and only if $G$ is generated by all the unipotent subgroup schemes.\\
(3) Let us see another example of finite local $k$-group scheme $G$ with $\X(G)=1$. Let $\Sigma$ be a semi-simple simply connected algebraic group over $k$. Then for any integer $r>0$, the $r$-th Frobenius kernel
\begin{equation*}
\Sigma_{(r)}\Def\Ker(\Sigma^{(-r)}\xrightarrow{F^{(r)}}\Sigma)
\end{equation*}
has no nontrivial characters, i.e., $\X(\Sigma_{(r)})=1$~(Indeed, since $\Sigma$ is semi-simple simply connected, any character $\Sigma_{(r)}\to\G_m$ comes from some character of $\Sigma$~\cite[Part II, Chapter 3,~3.15, Proposition and Remarks~2)]{ja03}. However, since $\Sigma$ is semi-simple, there exist no nontrivial characters of $\Sigma$~\cite[Part II, Chapter 1, 1.18~(3)]{ja03}). Therefore, if Question \ref{ques:purely insep} is affirmative for the affine line $\A^1_k$, then the group scheme $\Sigma_{(r)}$ must appear as a finite quotient of $\pi^{\loc}(\A_k^1)$. We will prove this fact is actually true~(cf.~Corollary \ref{cor:bertini}).\\
(4) Moreover, if $\Sigma$ is semi-simple simply connected algebraic group over $k$, then one can prove that the 1-st Frobenius kernel $\Sigma_{(1)}$ is generated by all the unipotent subgroup schemes of it. Indeed, fix a maximal torus $T<\Sigma$. Let $R$ be the root system. Choose a positive system $R^+\subset R$ and denote by $S$ the corresponding set of simple roots~(\cite[Part II, Chapter 1, 1.5]{ja03}). We denote by $U^{+}$ (resp. $U^-$) the unipotent radical corresponding to the positive roots (resp. the negative roots). By \cite[Part II, Chapter 3, 3.2~Lemma]{ja03}, it suffices to show that $T_{(1)}\subset\langle U^{\pm}_{(1)}\rangle$. Since $\Sigma$ is simply connected, we have
\begin{equation*}
(\alpha^{\vee})_{\alpha\in S}:\prod_{\alpha\in S}\G_m\xrightarrow{\simeq} T
\end{equation*}
(cf.~\cite[Part II, Chapter 1, 1.6~(4)]{ja03}), where the $\alpha^{\vee}$ are dual roots. Hence, we are reduced to show that $\alpha^{\vee}(\mu_{p})\subset\langle U^{\pm}_{(1)}\rangle$ for any $\alpha\in S$. For this, we may assume that $\Sigma=\SL_2$. In this case, we have $\Dim~k[\SL_{2(1)}]=p^3$ and $\Dim~k[U^{\pm}_{(1)}]=p$, whence  $p^2\le\Dim~k[\langle U^{\pm}_{(1)}\rangle]\le p^3$. For the equality $\SL_{2(1)}=\langle U^{\pm}_{(1)}\rangle$, it suffices to show $\Dim~k[\langle U^{\pm}_{(1)}\rangle]=p^3$. However, again by \cite[Part II, Chapter 3, 3.2~Lemma]{ja03}, there exists a  surjective $k$-algebra homomorphism $k[\SL_{2(1)}]\twoheadrightarrow k[U^+_{(1)}\times U^-_{(1)}]$. This factors through $k[\langle U^{\pm}_{(1)}\rangle]$ and the resulting algebra map $\phi:k[\langle U^{\pm}_{(1)}\rangle]\to k[U^+_{(1)}\times U^-_{(1)}]$ is then surjective. The map $\phi$ is not isomorphism because $U^+_{(1)}\times U^-_{(1)}$ is not a subgroup scheme of $\SL_{2(1)}$ but $\langle U^{\pm}_{(1)}\rangle$ is. Therefore, we have $\Dim~k[\langle U^{\pm}_{(1)}\rangle]>\Dim~k[U^+_{(1)}\times U^-_{(1)}]=p^2$. Then we must have $\Dim~k[\langle U^{\pm}_{(1)}\rangle]=p^3$, which implies that $\SL_{2(1)}=\langle U^{\pm}_{(1)}\rangle$. This completes the proof.
\end{rem}


\section{Main results}\label{sec:main}

\subsection{Torsors coming from Frobenius endmorphisms of an affine algebraic group}\label{sec:main-prob}

Let $k$ be an algebraically closed field of characteristic $p>0$ and $U$ a smooth affine curve over $k$. Let $\Sigma$ be an affine algebraic group over $k$. Note that for each integer $r>0$, the $r$-th relative Frobenius morphism $F^{(r)}:\Sigma^{(-r)}\to\Sigma$ gives a saturated $\Sigma_{(r)}\Def\Ker(F^{(r)})$-torsor over $\Sigma$. Here, recall that the saturatedness means the corresponding homomorphism $\pi^N(\Sigma)\to\Sigma_{(r)}$ is surjective~(cf.~Definition \ref{definition:saturated}).
In Section \ref{sec:main}, motivated by Question \ref{ques:purely insep}, we will consider the following question:

\begin{ques}\label{ques:main}
Fix an integer $r>0$.\\
(1) Does there exist any $k$-morphism $f:U\to\Sigma$ so that the $\Sigma_{(r)}$-torsor $f^*\Sigma^{(-r)}\to U$ defined by the following cartesian diagram is saturated~? 
\begin{equation*}
\begin{xy}
\xymatrix{\ar@{}[rd]|{\square}
f^*\Sigma^{(-r)}\ar[r]\ar[d]&\Sigma^{(-r)}\ar[d]^{F^{(r)}}\\
U\ar[r]_{f}&\Sigma.
}
\end{xy}
\end{equation*}
(2) Furthermore, if exists, for which $k$-morphism $f:U\to\Sigma$, is the resulting $\Sigma_{(r)}$-torsor $f^*\Sigma^{(-r)}\to U$ saturated~?
\end{ques}

Let us begin with showing that one can reduce the problem to the case $r=1$:

\begin{lem}\label{lem:any ht}
Let $\Sigma$ be an affine algebraic group over $k$ and $f:U\to\Sigma$ a $k$-morphism. If $f^*\Sigma^{(-1)}$ is saturated, then for any integer $r>1$, $f^*\Sigma^{(-r)}$ is also saturated. 
\end{lem}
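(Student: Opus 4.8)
The plan is to argue by induction on $r$, the only non-formal ingredient being a short computation in local coordinates at the identity of $\Sigma$.

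First I would record the following compatibility. Factor the $r$-th relative Frobenius as $F^{(r)}\colon\Sigma^{(-r)}\xrightarrow{F^{(1)}}\Sigma^{(-r+1)}\xrightarrow{F^{(r-1)}}\Sigma$. Restricting $F^{(1)}\colon\Sigma^{(-r)}\to\Sigma^{(-r+1)}$ to Frobenius kernels gives a homomorphism $p\colon\Sigma_{(r)}\to\Sigma_{(r-1)}$; since the square with vertices $\Sigma_{(r)},\Sigma^{(-r)},\Sigma_{(r-1)},\Sigma^{(-r+1)}$ is cartesian and $F^{(1)}$ is faithfully flat, $p$ is faithfully flat, hence an fppf surjection. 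The evident $p$-equivariant morphism $\Sigma^{(-r)}\xrightarrow{F^{(1)}}\Sigma^{(-r+1)}$ is compatible with the projections to $\Sigma$ (namely $F^{(r)}$ and $F^{(r-1)}$), so pulling back along $f$ exhibits the $\Sigma_{(r-1)}$-torsor $f^{*}\Sigma^{(-r+1)}\to U$ as the push-forward of the $\Sigma_{(r)}$-torsor $f^{*}\Sigma^{(-r)}\to U$ along $p$. Consequently, writing $\phi_{j}\colon\pi^{\loc}(U)\to\Sigma_{(j)}$ for the homomorphism classifying $f^{*}\Sigma^{(-j)}$, we have $p\circ\phi_{r}=\phi_{r-1}$.

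Now induct on $r\ge 1$, the case $r=1$ being trivial. For $r\ge 2$ the inductive hypothesis — for $r=2$ it is exactly the hypothesis of the Lemma — asserts that $\phi_{r-1}$ is surjective, so $H\Def\im(\phi_{r})$ is a subgroup scheme of $\Sigma_{(r)}$ with $p(H)=\Sigma_{(r-1)}$. I claim this forces $H=\Sigma_{(r)}$, which is precisely the statement that $f^{*}\Sigma^{(-r)}$ is saturated; the general case then follows. For the claim it is enough to prove that \emph{no proper subgroup scheme of $\Sigma_{(r)}$ surjects onto $\Sigma_{(r-1)}$ under $p$}. To this end, choose \'etale-local coordinates $x_{1},\dots,x_{d}$ at $e\in\Sigma$, $d=\dim\Sigma$, so that $\mathcal{O}(\Sigma_{(r)})=k[x_{1},\dots,x_{d}]/(x_{i}^{p^{r}})$, the map $p^{*}$ identifies $\mathcal{O}(\Sigma_{(r-1)})$ with the subring $B\Def k[x_{i}^{p}]/(x_{i}^{p^{r}})$, and $\Ker(p)$ is cut out by $(x_{1}^{p},\dots,x_{d}^{p})$. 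If $M\subsetneq\Sigma_{(r)}$ is a subgroup scheme with $p(M)=\Sigma_{(r-1)}$, then its defining ideal $I_{M}\subseteq(x_{i})$ satisfies $I_{M}\cap B=0$; but $h^{p}\in B$ for every $h\in\mathcal{O}(\Sigma_{(r)})$, so $h\in I_{M}$ forces $h^{p}\in I_{M}\cap B=0$, i.e. $h\in(x_{1}^{p^{r-1}},\dots,x_{d}^{p^{r-1}})\subseteq(x_{1}^{p},\dots,x_{d}^{p})$. Hence $I_{M}$ lies in the ideal of $\Ker(p)$, so $\Ker(p)\subseteq M$, and together with $p(M)=\Sigma_{(r-1)}$ this gives $M=p^{-1}(\Sigma_{(r-1)})=\Sigma_{(r)}$, a contradiction.

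The point I expect to require care is the choice of surjection in the induction. The temptation is to use the canonical surjection $\bar\pi\colon\Sigma_{(r)}\twoheadrightarrow\Sigma_{(1)}$, for which $\bar\pi\circ\phi_{r}=\phi_{1}$ is surjective already by hypothesis, and conclude in one step; but the analogous coordinate computation then only yields $I_{M}\subseteq(x_{i}^{p})$, which fails to contain $\Ker(\bar\pi)$ once $r\ge 3$, so that argument does not close. Working instead with the ``one-step'' surjection $p\colon\Sigma_{(r)}\to\Sigma_{(r-1)}$ — whose kernel is small enough that $I_{M}\subseteq(x_{i}^{p})$ does imply $\Ker(p)\subseteq M$ — is what makes the induction go through, with the already-treated level $r-1$ supplying the surjectivity of $\phi_{r-1}$. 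The remaining verifications (faithful flatness of $p$, and that push-forward of torsors corresponds to composition of classifying homomorphisms) are routine.
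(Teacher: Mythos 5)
Your argument is correct, but it proves the inductive step by a genuinely different mechanism than the paper. The paper also inducts on $r$, but at each stage it goes back to the geometry: it restricts the classifying homomorphism along $F^{(r)}_*:\pi^{\loc}(U^{(-r)})\to\pi^{\loc}(U)$, identifies the induced map into the kernel $\Sigma_{(1)}^{(-r)}$ of $\Sigma_{(r+1)}\to\Sigma_{(r)}$ with the classifying map of the Frobenius twist of the height-one torsor $f^*\Sigma^{(-1)}$ (pulled back over $U^{(-r)}$ via $f^{(-r)}$), and invokes the saturatedness hypothesis once more at that twisted level to conclude that the image of $\phi^{(-r-1)}$ contains the kernel. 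You instead isolate a purely group-scheme-theoretic statement: the transition map $p:\Sigma_{(r)}\to\Sigma_{(r-1)}$ is a \emph{Frattini cover}, i.e.\ no proper subgroup scheme of $\Sigma_{(r)}$ surjects onto $\Sigma_{(r-1)}$, so that surjectivity propagates up the tower formally from the single compatibility $p\circ\phi_r=\phi_{r-1}$. Your coordinate proof of this is sound (it uses smoothness of $\Sigma$, guaranteed by the paper's conventions, and $k$ perfect to identify the image of $p^*$ with the subalgebra generated by the $x_i^p$), and your closing remark correctly diagnoses why the analogous computation for $\Sigma_{(r)}\to\Sigma_{(1)}$ alone would not close; note also that the Frattini property has a coordinate-free proof: by functoriality of relative Frobenius, $p(H)$ lands in $H^{(1)}$, so $p(H)=\Sigma_{(r-1)}$ forces $H\supseteq\Sigma_{(r-1)}^{(-1)}\supseteq\Ker(p)$, whence $H=p^{-1}(\Sigma_{(r-1)})$. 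What the two routes buy: yours uses the saturation hypothesis only once (at the base) and yields the stronger statement that \emph{any} tower of torsors compatible under the maps $p$ with saturated height-one level is saturated at every level, at the cost of a structural computation with Frobenius kernels; the paper's route stays entirely within the torsor/fundamental-group formalism used elsewhere in the paper, exploiting the specific origin of the torsors as pullbacks of the Frobenius tower along $f$ and their behaviour under the Frobenius of the base curve.
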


\begin{proof}
We will show this by induction on $r\ge 1$. We will denote by $\phi^{(-r)}:\pi^{\loc}(U)\to\Sigma_{(r)}$ the homomorphism corresponding to the torsor $f^*\Sigma^{(-r)}$. Assume $\phi^{(-r)}$ is surjective. Let us show that $\phi^{(-r-1)}$ is also surjective. Since $F^{(r)*}f^*\Sigma^{(-r)}$ is a trivial torsor,   the composition
\begin{equation*}
\pi^{\loc}(U^{(-r)})\xrightarrow{F^{(r)}_*}\pi^{\loc}(U)\overset{\phi^{(-r)}}{\twoheadrightarrow}\Sigma_{(r)}
\end{equation*}
is trivial. We then obtain the following commutative diagram:
\begin{equation}\label{eq:any ht}
\begin{xy}
\xymatrix{
&\pi^{\loc}(U^{(-r)})\ar[r]^{F^{(r)}_*}\ar @{-->} [d]_{\psi~ :=}^{\exists}&\pi^{\loc}(U)\ar[d]_{\phi^{(-r-1)}}\ar[rd]^{\phi^{(-r)}} & & \\
1\ar[r]& \Sigma_{(1)}^{(-r)}\ar[r]& \Sigma_{(r+1)}\ar[r]&\Sigma_{(r)}\ar[r]& 1~(\text{exact})
}
\end{xy}
\end{equation}
and the map $\phi^{(-r-1)}\circ F^{(r)}_*$ factors through $\Sigma_{(1)}^{(-r)}$. We denote by $\psi$ the resulting homomorphism $\pi^{\loc}(U^{(-r)})\to\Sigma_{(1)}^{(-r)}$. We are then reduced to showing the surjectivity of $\psi$. The commutativity  of the diagram (\ref{eq:any ht}) implies that ${\rm Ind}_{\Sigma_{(1)}^{(-r)}}^{\Sigma_{(r+1)}}(Q)\simeq F^{(r)*}f^*\Sigma^{(-r-1)}$, where $Q$ is the torsor over $U^{(-r)}$ corresponding to the morphism $\psi$. On the other hand, by considering the tautological commutative diagram
\begin{equation*}
\begin{xy}
\xymatrix{
\Sigma^{(-r-1)}\ar[r]^{=}\ar[d]_{F^{(1)}}&\Sigma^{(-r-1)}\ar[d]^{F^{(r+1)}}\\
\Sigma^{(-r)}\ar[r]^{F^{(r)}}&\Sigma,
}
\end{xy}
\end{equation*}
we can find that
\begin{equation*}
{\rm Ind}_{\Sigma_{(1)}^{(-r)}}^{\Sigma_{(r+1)}}(\Sigma^{(-r-1)}\xrightarrow{F^{(1)}}\Sigma^{(-r)})=F^{(r)*}(\Sigma^{(-r-1)}\xrightarrow{F^{(r+1)}}\Sigma).
\end{equation*}
Therefore, by the construction, $Q$ is nothing but the torsor defined by the cartesian diagram
\begin{equation*}
\begin{xy}
\xymatrix{\ar@{}[rd]|{\square}
Q\ar[r]\ar[d]&\Sigma^{(-r-1)}\ar[d]^{F^{(1)}}\\
U^{(-r)}\ar[r]^{f^{(-r)}}&\Sigma^{(-r)},
}
\end{xy}
\end{equation*}
where $f^{(-r)}$ is just the $r$-th Frobenius twist of $f$:
\begin{equation*}
\begin{xy}
\xymatrix{\ar@{}[rd]|{\square}
U^{(-r)}\ar[r]^{\simeq}\ar[d]_{f^{(-r)}}&U\ar[d]^{f}\\
\Sigma^{(-r)}\ar[r]^{\simeq}&\Sigma.
}
\end{xy}
\end{equation*}
Then, the saturatedness of the torsor $f^*\Sigma^{(1)}\to U$ indicates the saturatedness of the torsor $Q\to U^{(-r)}$, or equivalently, the surjectivity of $\psi$. This completes the proof.
\end{proof}

Next let us see a basic example. 
The following proposition gives a complete answer to Question \ref{ques:main} for the pairs $(U,\Sigma)=(\A_k^1,\G_a^{\oplus n})~(n\ge 1)$.

\begin{prop}\label{prop:toy example}
Assume that $k$ is of characteristic $p>0$ and $n>0$ an integer. Let
\begin{equation*}
\underline{f}=(f_i(t))\in k[t]^{\oplus n}={\rm Mor}_k\bigl(\A^1_k,\G_a^{\oplus n}\bigl)
\end{equation*}
be a $k$-morphism $\A^1_k\to\G_a^{\oplus n}$. We define the  $\alpha_p^{\oplus n}$-torsor $P_{\underline{f}}$ over $\A^1_k$ by the pulling back of the relative Frobenius morphism $F^{(1)}:{\G_a^{(-1)}}^{\oplus n}\to\G_a^{\oplus n}$, i.e., $P_{\underline{f}}\Def \underline{f}^*{\G_a^{(-1)}}^{\oplus n}$. Then $P_{\underline{f}}$ is saturated if and only if the images $\overline{f_i(t)}\,(\,1\le i\le n)$ in $H^1_{\fpqc}(\A^1_k,\alpha_p)=k[t]/k[t^p]$ are linearly independent over $k$.
\end{prop}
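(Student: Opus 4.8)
The plan is to turn the torsor $P_{\underline f}$ into a cohomology class, reduce saturatedness to a condition that can be tested against the single simple quotient $\alpha_p$ of $\alpha_p^{\oplus n}$, and then recognize that condition as linear independence of the $\overline{f_i(t)}$.

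First I would record the class of $P_{\underline f}$. The relative Frobenius sits in a short exact sequence of fppf sheaves $0\to\alpha_p^{\oplus n}\to(\G_a^{(-1)})^{\oplus n}\xrightarrow{F^{(1)}}\G_a^{\oplus n}\to 0$ on $\A^1_k$, and since $\A^1_k$ is affine one has $H^1_{\fpqc}(\A^1_k,\G_a)=H^1(\A^1_k,\calO)=0$; the connecting map then yields $H^1_{\fpqc}(\A^1_k,\alpha_p^{\oplus n})\simeq(k[t]/k[t^p])^{\oplus n}$, the $n$-fold version of the computation already recalled in (\ref{eq:a_p}). By the very definition of $P_{\underline f}=\underline f^*(\G_a^{(-1)})^{\oplus n}$ as a pullback of $F^{(1)}$, its class under this isomorphism is the tuple $(\overline{f_1(t)},\dots,\overline{f_n(t)})$, and via (\ref{eq:hom-coh}) this is precisely the homomorphism $\phi\colon\pi^{\loc}(\A^1_k)\to\alpha_p^{\oplus n}$ whose surjectivity is, by definition, the saturatedness of $P_{\underline f}$.

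Next I would reduce surjectivity of $\phi$ to testing against homomorphisms $\alpha_p^{\oplus n}\to\alpha_p$. The image $N$ of $\phi$ is a subgroup scheme of $\alpha_p^{\oplus n}$; if it is proper, then the nonzero finite connected unipotent $k$-group scheme $\alpha_p^{\oplus n}/N$ has a simple quotient, which can only be $\alpha_p$ (the unique simple finite connected unipotent $k$-group scheme), and composing produces a nonzero $\psi\in\Hom_k(\alpha_p^{\oplus n},\alpha_p)$ with $\psi\circ\phi=0$. Conversely, $\psi\circ\phi=0$ for a nonzero $\psi$ forces $N\subseteq\Ker\psi\subsetneq\alpha_p^{\oplus n}$. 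So $P_{\underline f}$ is saturated if and only if $\psi\circ\phi\neq 0$ for every nonzero $\psi$, where $\psi\circ\phi\in\Hom_k(\pi^{\loc}(\A^1_k),\alpha_p)=H^1_{\fpqc}(\A^1_k,\alpha_p)$ is the image of the class of $P_{\underline f}$ under $H^1_{\fpqc}(\A^1_k,\psi)$. Since the primitive elements of $k[x]/(x^p)$ are exactly the $cx$ $(c\in k)$, we get $\Hom_k(\alpha_p,\alpha_p)=k$ and hence $\Hom_k(\alpha_p^{\oplus n},\alpha_p)\simeq k^{\oplus n}$; writing $\psi=(c_1,\dots,c_n)$ and using functoriality of $H^1$, the class $\psi\circ\phi$ is $\sum_{i=1}^n c_i\overline{f_i(t)}$ (up to replacing each $c_i$ by $c_i^p$, which is harmless since $c\mapsto c^p$ is a bijection of $k$). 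Therefore $P_{\underline f}$ is saturated iff $\sum_i c_i\overline{f_i(t)}\neq 0$ for all $(c_i)\in k^{\oplus n}\setminus\{0\}$, i.e. iff $\overline{f_1(t)},\dots,\overline{f_n(t)}$ are linearly independent over $k$ in $H^1_{\fpqc}(\A^1_k,\alpha_p)=k[t]/k[t^p]$.

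The step I expect to be the main obstacle is the reduction in the previous paragraph: one must be certain that surjectivity of a homomorphism from the pro-finite group scheme $\pi^{\loc}(\A^1_k)$ onto $\alpha_p^{\oplus n}$ is detected by composition with the finitely many maps to $\alpha_p$. This rests on the facts that a proper subgroup scheme of $\alpha_p^{\oplus n}$ lies in the kernel of some surjection onto a simple quotient, and that $\alpha_p$ is the only simple finite connected unipotent $k$-group scheme. The remaining points — the connecting-map identification of $H^1_{\fpqc}(\A^1_k,\alpha_p^{\oplus n})$, the computation of $\Hom_k(\alpha_p^{\oplus n},\alpha_p)$, and the Frobenius twist — are routine.
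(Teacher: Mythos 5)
Your argument is correct, but it takes a genuinely different route from the paper's. The paper proves the proposition by induction on $n$: setting $\underline{f'}=(f_1,\dots,f_{n-1})$, $\phi'$ the corresponding homomorphism and $K=\Ker(\phi')$, it compares the extension $0\to\alpha_p\to\alpha_p^{\oplus n}\to\alpha_p^{\oplus n-1}\to 0$ with $\pi^{\loc}(\A^1_k)$ and reduces surjectivity of $\phi$ to the condition ${\rm pr}_n\circ\phi\notin{\phi'}^*\Hom_k(\alpha_p^{\oplus n-1},\alpha_p)$, i.e.\ $\overline{f_n}\notin\langle\overline{f_1},\dots,\overline{f_{n-1}}\rangle_k$; the only group-theoretic input there is the simplicity of $\alpha_p$. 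You argue in one shot instead: surjectivity of $\phi$ is detected by composing with all nonzero $\psi\in\Hom_k(\alpha_p^{\oplus n},\alpha_p)\simeq k^{\oplus n}$. This costs you the extra structural fact that a proper image inside $\alpha_p^{\oplus n}$ is killed by some such $\psi$; your appeal to $\alpha_p$ being the unique simple finite connected unipotent $k$-group scheme is a standard (and correct) fact, and in this special case it can be had more cheaply, since any quotient of $\alpha_p^{\oplus n}$ is commutative with vanishing Frobenius and Verschiebung, hence again a direct sum of copies of $\alpha_p$, so every nontrivial quotient surjects onto $\alpha_p$. What you buy is a non-inductive proof that treats both implications symmetrically and makes the appearance of linear independence completely transparent; you also make explicit the $p$-th power twist of the coefficients under pushforward of the class along $\psi$, a point the paper's step ``$\Longleftrightarrow f_n(t)\in(k[t]/k[t^p])\setminus\langle\overline{f_1},\dots,\overline{f_{n-1}}\rangle_k$'' leaves implicit and which is indeed harmless because $k$ is perfect.
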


\begin{proof}
We will show this by induction on $n>0$. In the case where $n=1$, then since $\alpha_p$ is simple, the assertion is obvious. 
From now on assume that $n>1$ and that $ \Dim_k\langle\overline{f_i(t)}\,|\,1\le i\le n-1\rangle_k=n-1$. 
Put $\underline{f'}:=(f_1(t),\dots,f_{n-1}(t))$. Then $P_{\underline{f'}}$ is an $\alpha_p^{\oplus n-1}$-torsor over $\A_k^1$. We denote by $\phi$ and $\phi'$ the homomorphism $\pi^{\loc}(\A^1_k)\to\alpha_p^{\oplus n}$ corresponding to $P_{\underline{f}}$ and the one $\pi^{\loc}(\A^1_k)\to\alpha_p^{\oplus n-1}$ to $P_{\underline{f'}}$, respectively. From the assumption, $\phi'$ is surjection. Let $K\Def\Ker(\phi')$.  We then obtain the following commutative diagram:
\begin{equation*}
\begin{xy}
\xymatrix{
0\ar[r]& K\ar[r]\ar[d]^{\psi}& \pi^{\loc}(\A^1_k)\ar[r]^{\phi'}\ar[d]^{\phi}& \alpha_p^{\oplus n-1}\ar[r]\ar @{=} [d]& 0\\
0\ar[r]&\alpha_p\ar[r]& \alpha_p^{\oplus n}\ar @/^5mm/ [l]^{{\rm pr_n}}\ar[r]& \alpha_p^{\oplus n-1}\ar[r] & 0.
}
\end{xy}
\end{equation*}
Then we have
\begin{equation*}
\begin{aligned}
\phi:~\text{surjective}~&\Longleftrightarrow~\psi~\neq~1\\
~&\Longleftrightarrow~{\rm pr_n}\circ\phi\in\Hom(\pi^{\loc}(\A^1_k),\alpha_p)\setminus{\phi'}^*\Hom(\alpha_p^{\oplus n-1},\alpha_p)\\
~&\Longleftrightarrow~f_n(t)\in (k[t]/k[t^p])\setminus\langle\overline{f_1(t)},\dots,\overline{f_{n-1}(t)}\rangle_k\\
~&\Longleftrightarrow~\Dim_k\langle\overline{f_i(t)}\,|\,1\le i\le n\rangle_k=n.
\end{aligned}
\end{equation*}
This completes the proof.
\end{proof}

\subsection{Explicit equations definig saturated $\SL_{2(1)}$-torsors in characteristic $p=2$ case}

We will continue to use the same notation as in Section \ref{sec:main-prob}. As we have seen in Example \ref{ex:SL_2}, in the case where $k$ is of characteristic $p=2$, there exists a saturated $\SL_{2(1)}$-torsor $P\to\A^1_k$~(cf. Definition \ref{definition:saturated}). On the other hand, since $H^1_{\fpqc}(\A^1_k,\GL_2)=0$ and $H^0_{\fpqc}(\A^1_k,\GL_2)\to H^0_{\fpqc}(\A^1_k,\G_m)=k^*$ is surjective, we have $H^1_{\fpqc}(\A_k^1,\SL_2)=0$, whence
\begin{equation*}
H^1_{\fpqc}(\A^1_k,\SL_{2(1)})\simeq \SL_2(k[t^2])\backslash\SL_2(k[t]).
\end{equation*}
Therefore, such a torsor $P\to\A^1_k$ must be obtained by the pulling back of the relative Frobenius morphism $F^{(1)}:\SL_2^{(-1)}\to\SL_2$ along some $k$-morphism $f:\A^1_{k}\to\SL_2$:
\begin{equation*}
\begin{xy}
\xymatrix{\ar@{}[rd]|{\square}
P\ar[r]\ar[d]&\SL_2^{(-1)}\ar[d]^{F^{(1)}}\\
\A^1_k\ar[r]^{f}&\SL_2.
}
\end{xy}
\end{equation*}

Hence, by Proposition \ref{prop:nilpotent} and Example \ref{ex:SL_2}, combining with Lemma \ref{lem:any ht}, we can obtain an affimative answer to Question \ref{ques:main}(1) for the pair $(U,\Sigma)=(\A_k^1,\SL_2)$ in the case where $p=2$. As a consequence of it, we have:

\begin{cor}
Assume $p=2$. Then there exists a surjective homomorphism 
\begin{equation*}
\pi^{\loc}(\A^1_k)\twoheadrightarrow\varprojlim_{r>0}\SL_{2(r)}.
\end{equation*}
In particular, for any integer $r>0$, there exists a surjective homomorphism $\pi^{\loc}(\A_k^1)\twoheadrightarrow\SL_{2(r)}$.
\end{cor}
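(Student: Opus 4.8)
The plan is to assemble the statement from Example~\ref{ex:SL_2} and Lemma~\ref{lem:any ht}. First, by Example~\ref{ex:SL_2} (where Proposition~\ref{prop:nilpotent} is applied to the nilpotent group scheme $\SL_{2(1)}$, using $\X(\SL_{2(1)})=1$ and $\gamma+n-1=0$ for $U=\A^1_k$), there is a saturated $\SL_{2(1)}$-torsor $P\to\A^1_k$. Since $H^1_{\fpqc}(\A^1_k,\SL_2)=0$, the map ${\rm Mor}_k(\A^1_k,\SL_2)=\SL_2(k[t])\to H^1_{\fpqc}(\A^1_k,\SL_{2(1)})$ sending $f$ to $f^*\SL_2^{(-1)}$ is surjective, so I would fix an $f\colon\A^1_k\to\SL_2$ with $f^*\SL_2^{(-1)}\cong P$; thus $f^*\SL_2^{(-1)}$ is saturated.

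Next I would invoke Lemma~\ref{lem:any ht} with $\Sigma=\SL_2$: the saturatedness of $f^*\SL_2^{(-1)}$ forces $f^*\SL_2^{(-r)}$ to be saturated for every $r>1$. Writing $\phi^{(-r)}\colon\pi^{\loc}(\A^1_k)\twoheadrightarrow\SL_{2(r)}$ for the surjection classifying $f^*\SL_2^{(-r)}$, this already yields the ``in particular'' clause. Moreover, the canonical map of torsors $f^*\SL_2^{(-r-1)}\to f^*\SL_2^{(-r)}$ (pullback of $F^{(1)}\colon\SL_2^{(-r-1)}\to\SL_2^{(-r)}$) is equivariant for the transition homomorphism $\SL_{2(r+1)}\to\SL_{2(r)}$ in the system $\{\SL_{2(r)}\}_{r>0}$; equivalently, the commutative triangle in diagram~(\ref{eq:any ht}) from the proof of Lemma~\ref{lem:any ht} shows that $\phi^{(-r)}$ is the composite of $\phi^{(-r-1)}$ with $\SL_{2(r+1)}\to\SL_{2(r)}$. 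Hence the $\phi^{(-r)}$ are compatible and induce a homomorphism of profinite $k$-group schemes $\Phi\colon\pi^{\loc}(\A^1_k)\to\varprojlim_{r>0}\SL_{2(r)}$.

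It remains to see that $\Phi$ is faithfully flat, i.e.\ surjective. Each $\phi^{(-r)}$ being faithfully flat, its comorphism $\calO(\SL_{2(r)})\to\calO(\pi^{\loc}(\A^1_k))$ is injective; passing to the filtered colimit over $r$ gives an injection $\calO(\varprojlim_{r>0}\SL_{2(r)})=\varinjlim_{r>0}\calO(\SL_{2(r)})\hookrightarrow\calO(\pi^{\loc}(\A^1_k))$. Now an injective comorphism of affine $k$-group schemes is automatically faithfully flat: factoring $\Phi$ as a faithfully flat morphism onto its scheme-theoretic image followed by a closed immersion, the injectivity of the comorphism forces the image to be all of $\varprojlim_{r>0}\SL_{2(r)}$. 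This produces the desired surjection.

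There is no genuine obstacle: the substance lies entirely in Proposition~\ref{prop:nilpotent} (via Example~\ref{ex:SL_2}) and Lemma~\ref{lem:any ht}. The only step requiring (routine) care is the last one, namely that surjectivity onto each finite quotient $\SL_{2(r)}$ upgrades to surjectivity onto the cofiltered limit --- a standard fact about homomorphisms of affine group schemes over a field, detected on coordinate rings and compatible with cofiltered limits of the target.
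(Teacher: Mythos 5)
Your argument is correct and is essentially the paper's own route: Example~\ref{ex:SL_2} (via Proposition~\ref{prop:nilpotent}) gives a saturated $\SL_{2(1)}$-torsor, the vanishing $H^1_{\fpqc}(\A^1_k,\SL_2)=0$ realizes it as $f^*\SL_2^{(-1)}$, and Lemma~\ref{lem:any ht} then yields saturated $\SL_{2(r)}$-torsors for all $r>0$. The only difference is that you make explicit the (routine) passage from compatible surjections $\phi^{(-r)}$ to a surjection onto $\varprojlim_{r>0}\SL_{2(r)}$ via injectivity of comorphisms, a step the paper leaves implicit.
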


Next, we will consider Question \ref{ques:main}(2) to the pair $(U,\Sigma)=(\A_k^1,\SL_{2})$ in the case where char $k=2$ and will give an answer~(cf.~Corollary \ref{cor:SL_2}).

Recall that the saturatedness of a finite \'etale torsor $P\to U$ depends only on the underlying scheme of it. In fact, it is saturated if and only if it is (geometrically) connected~(cf.~\cite[Lemma 2.3]{zh13}).
One of difficulties of our problem is that the saturatedness of a local torsor, in contrary to \'etale case, depends also on the multiplicative structure of the underlying group scheme. The following simple example indicates such a situation:

\begin{ex}
Let $l$ be a prime number with $l\neq p$. We define the $k$-morphism $f:\G_m\to\G_m\times\G_m$ by $a\mapsto (a,a^l)$. We define the one $g:\G_m\to\G_a\times\G_m$ as the composition $\G_m\xrightarrow{f}
\G_m\times\G_m\subset\G_a\times\G_m$. Then the underlying schemes of the torsors $f^*(\G_m^{(-1)}\times\G_m^{(-1)})$ and $g^*(\G_a^{(-1)}\times\G_m^{(-1)})$ are isomorphic to each other. However, the former is not saturated, but the latter is.
\end{ex}

Hence, it seems to be difficult to obtain a concise characterization of the saturatedness of finite local torsors purely in terms of the category $N(X,x)$~(cf. Section \ref{sec:preliminaries}). 
To avoid this problem, we will rely on a Tannakian interpretation of $\pi^N(U)$. We will use the category of \textit{generalized stratified bundles}, introduced by Esnault-Hogadi~\cite{eh12}. For the full definition of it, see \cite{eh12} (for the one of stratified bundles in the usual sense, see,  for example, \cite{gi75}\cite{ds07}). By the virtue of Lemma \ref{lem:any ht}, the category $\Str(U,1)$ of \textit{$1$-stratified bundles} is large enough for our purpose:

\begin{definition}
Let $X$ be a smooth algebraic variety over a perfect field $k$ of characteristic $p>0$. A \textit{$1$-stratified bundle} on $X$ is a sequence $\{E^{(i)}\}_{i=0}^{\infty}$ of coherent sheaves $E^{(i)}$ over $X^{(i)}$ together with isomorphisms $\sigma^{(i)}:E^{(i)}\xrightarrow{\simeq} {F^{(1)*}}E^{(i+1)}$ for $i\ge 1$ and
\begin{equation}\label{eq:sigma^0}
\sigma^{(0)}:F^{(1)*}E^{(0)}\xrightarrow{\simeq}
F^{(2)*}E^{(1)}.
\end{equation}
Let $E=\{E^{(i)},\sigma^{(i)}\}, E'=\{{E'}^{(i)},{\sigma'}^{(i)}\}$ be arbitrary two $1$-stratified bundles. 
A \textit{homomorphism} of $E$ into $E'$ is a sequence of $\calO_{X^{(i)}}$-linear homomorphisms $\phi^{(i)}:E^{(i)}\to {E'}^{(i)}~(i\ge 0)$ satisfying
\begin{equation*}
\begin{aligned}
&{\sigma'}^{(i)}\circ \phi^{(i)}~=~F^{(1)*}\phi^{(i+1)}\circ\sigma^{(i)},\quad i\ge 1;\\
&{\sigma'}^{(0)}\circ F^{(1)*}\phi^{(0)}~=~F^{(2)*}\phi^{(1)}\circ\sigma^{(0)}.
\end{aligned}
\end{equation*}
The homomorphisms of $1$-stratified bundles satisfy the composition rule and one obtain the category $\Str(X,1)$ of $1$-stratified bundles on $X$.
\end{definition}

\begin{thm}\label{thm:esnault-hogadi}(Esnault-Hogadi~\cite{eh12})
The category $\Str(X,1)$ of $1$-stratified bundles is a $k$-linear abelian rigid tensor category, and if one take a $k$-rational point $x\in X(k)$, then the functor $\omega_x:\Str(X,1)\to\Vect;~\{E^{(i)},\sigma^{(i)}\}\mapsto x^*E^{(0)}$ defines a neutral fiber functor. Furthermore, the maximal profinite quotient of its Tannakian fundamental group $\pi_1(\Str(X,1),\omega_x)$ coincides with the image of $F^{(1)}:\pi^N(X,x)\to\pi^N(X,x)^{(1)}$.
\end{thm}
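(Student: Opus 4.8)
The first two assertions amount to checking, degree by degree, that $\Str(X,1)$ has been set up to be a neutral Tannakian category. I would first isolate the standing inputs: since $X$ is smooth over the perfect field $k$, every relative Frobenius that occurs (between the $X^{(i)}$, or $X^{(-1)}\to X$) is finite and faithfully flat, so $F^{(1)*}$ is \emph{exact} and commutes with duals of vector bundles; and a coherent sheaf that is infinitely Frobenius-divided on a regular scheme is locally free (the standard local freeness of stratified-type sheaves; cf.~\cite{gi75}\cite{ds07}), so in any object $\{E^{(i)},\sigma^{(i)}\}$ the tail $\{E^{(i)}\}_{i\ge1}$ consists of vector bundles, and $E^{(0)}$ is one too by faithfully flat descent along $\sigma^{(0)}$. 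Granted this, kernels, cokernels, images, direct sums, tensor products and duals are formed degreewise, exactness of $F^{(1)*}$ ensuring that the outcome again satisfies the defining relations, and the unit is $\{\calO_{X^{(i)}}\}$; hence $\Str(X,1)$ is a $k$-linear abelian rigid tensor category. For $\omega_x$: $\omega_x(\{E^{(i)},\sigma^{(i)}\})=x^*E^{(0)}$ is a finite-dimensional $k$-vector space, and $\omega_x$ is exact and monoidal because $x^*$ is exact on vector bundles and commutes with $\otimes$. Faithfulness is the one nonformal point: if $\omega_x(\phi)=0$, then the image object $I=\im(\phi)$, computed in $\Str(X,1)$, has $I^{(0)}$ a vector bundle with $x^*I^{(0)}=0$, so $I^{(0)}=0$ since $X$ is connected, whence $\phi^{(0)}=0$, then $F^{(2)*}\phi^{(1)}=0$ via $\sigma^{(0)}$, so $\phi^{(1)}=0$ by faithful flatness, and inductively $\phi^{(i)}=0$ for all $i$.

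For the last assertion, put $\Pi=\pi_1(\Str(X,1),\omega_x)$, write $\mathfrak F\colon X\to X^{(1)}$ for the relative Frobenius, and let $\Pi^{\mathrm{prof}}$ be the maximal profinite quotient of $\Pi$, so $\Pi^{\mathrm{prof}}$ is the Tannakian group of the full subcategory $\Str(X,1)^{f}$ of finite objects. Applying $F^{(1)*}$ in each degree converts the strict descents $\sigma^{(i)}\ (i\ge1)$ of a $1$-stratified bundle into the ``generalized'' ones, defining a tensor functor $J\colon\Str(X,1)\to\Str(X)$ into the category of generalized stratified bundles of \cite{eh12}; $J$ is fully faithful (faithful flatness of $F^{(1)}$ reflects the compatibility equations), and its essential image is closed under subquotients (a subquotient of a strictly Frobenius-divided system stays strictly Frobenius-divided, by faithfully flat descent of subbundles), so $J$ realizes $\Str(X,1)$ as a Tannakian subcategory of $\Str(X)$. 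Since the maximal profinite quotient of $\pi_1(\Str(X),\omega_x)$ is $\pi^N(X,x)$ (the main comparison of \cite{eh12}), this gives a canonical surjection $q\colon\pi^N(X,x)\twoheadrightarrow\Pi^{\mathrm{prof}}$, and the theorem reduces to showing that $\Ker(q)$ equals the first Frobenius kernel $N=\Ker\bigl(F^{(1)}\colon\pi^N(X,x)\to\pi^N(X,x)^{(1)}\bigr)$; recall $\im(F^{(1)})\simeq\pi^N(X,x)/N$, which is already profinite, being closed in $\pi^N(X,x)^{(1)}$.

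It remains to prove the two inclusions, for a finite Nori torsor $P\to X$ with classifying homomorphism $\phi\colon\pi^N(X,x)\to G$. The inclusion $N\subseteq\Ker(q)$ is the easier one: a $1$-stratified structure endows $E^{(0)}$, through $\sigma^{(0)}$ and the strict tail, with the datum that $F^{(1)*}E^{(0)}$ is the Frobenius pullback of an honest stratified bundle; translating this through Cartier's dictionary between level-$1$ stratifications and Frobenius descents, together with the naturality of relative Frobenius, forces $\phi$ to factor through $\pi^N(X,x)/N$. The reverse inclusion $\Ker(q)\subseteq N$ — equivalently, that every finite quotient of $\pi^N(X,x)/N$ is realized by a $1$-stratified bundle — is the crux, and the step I expect to be the main obstacle. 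Given $\phi$ vanishing on $N$, one must manufacture the \emph{entire} strict stratification $\{E^{(i)}\}_{i\ge1}$ on $X^{(1)}$ together with the degree-$0$ lift. The idea is that ``$\phi$ kills the first Frobenius kernel'' says exactly that $P$ descends one Frobenius step along $\mathfrak F$; that descent produces $E^{(1)}$ from $E^{(0)}$, and because the torsor obtained at the next stage again has a trivial first Frobenius layer, the descent can be iterated up the tower. The delicate point is to run this induction so that the successive descents are \emph{strict} rather than merely ``generalized'' — which is precisely the gap that the asymmetric degree-$0$ condition of $\Str(X,1)$ is designed to bridge — since producing only a generalized stratified bundle is automatic (every finite torsor on $X^{(1)}$ yields one, by \cite{eh12}) and would merely recover $\pi^N(X,x)$ rather than the Frobenius-reduced quotient.
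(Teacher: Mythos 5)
The paper itself gives no proof of this statement: it is quoted from Esnault--Hogadi \cite{eh12} and used as a black box (together with \cite[Construction 4.1]{eh12} for the explicit object attached to a height-one torsor), so there is no internal argument to compare yours against; your attempt has to stand on its own as a proof of the cited result, and it does not yet. The first half of your sketch (degreewise abelian and rigid tensor structure, local freeness from infinite Frobenius division plus faithfully flat descent along $F^{(1)}$, exactness and faithfulness of $\omega_x$ via connectedness of $X$) is routine and essentially fine.

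The genuine gap is in the identification of the maximal profinite quotient with $\im\bigl(F^{(1)}:\pi^N(X,x)\to\pi^N(X,x)^{(1)}\bigr)$, and it is twofold. First, your reduction rests on the unproved assertion that the maximal profinite quotient of $\pi_1(\Str(X),\omega_x)$ is $\pi^N(X,x)$, for a category $\Str(X)$ of ``generalized stratified bundles'' that you never define. For the usual category of $F$-divided (stratified) bundles this is false --- there the maximal profinite quotient is the \'etale fundamental group (cf.~\cite{ds07}\cite{eh12}) --- and if $\Str(X)$ is taken to mean the generalized categories of \cite{eh12}, then the statement you invoke belongs to the very family of results containing the theorem to be proved, so the appeal is circular; moreover, full faithfulness of your functor $J$ and the closure of its essential image under subquotients require descending subobjects along Frobenius, which is asserted rather than checked. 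Second, and as you yourself concede, the crux is left open at both ends: the ``easier'' inclusion is only gestured at (``Cartier's dictionary \ldots forces $\phi$ to factor''), with none of the Frobenius-twist bookkeeping that makes this step delicate, and for the reverse inclusion you do not construct, for a finite quotient killing $N$, the strict tail $\{E^{(i)}\}_{i\ge 1}$ together with the degree-zero datum; ``descend one Frobenius step and iterate, keeping the descents strict'' is precisely the content of the Esnault--Hogadi theorem rather than an argument for it. As it stands, the proposal should either reproduce the proof from \cite{eh12} in detail or, as the paper does, simply cite it.
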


In particular, if $G$ is a finite local $k$-group scheme of height one, i.e., $G_{(1)}=G$, then any homomorphism $\phi:\pi^{\loc}(X)\to G$ factors through the maximal pro-finite local quotient $\pi_1(\Str(X,1),\omega_x)^{{\rm prof.}\loc}$. Denote by $\psi$ the resulting homomorphism
\begin{equation*}
\psi:\pi_1(\Str(X,1),\omega_x)^{{\rm prof.}\loc}\to G.
\end{equation*}
Let us consider the composition
\begin{equation}\label{eq:def of h}
h_{\phi}:\Rep(G)\xrightarrow{\psi^*}\Rep(\pi_1(\Str(X,1),\omega_x)^{{\rm prof.}\loc})\subset\Str(X,1).
\end{equation}
Then, from the standard Tannakian argument (cf. \cite[Chapter II, Proposition 3]{no82}), we have:

\begin{lem}\label{lem:criterion}
The homomorphism $\phi$ is surjective if and only if 
\begin{equation*}
\Dim_k\Hom_{\Str(X,1)}(\unit,h_{\phi}(k[G],\rho_{{\rm reg}}))=1.
\end{equation*}
Here $\unit\Def\{\calO_{X^{(i)}},{\rm id}\}$ is a unit object of $\Str(X,1)$.
\end{lem}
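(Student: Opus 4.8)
Write $\Pi\Def\pi_1(\Str(X,1),\omega_x)^{{\rm prof.}\loc}$. The plan is the standard Tannakian argument behind Nori's \cite[Chapter~II, Proposition~3]{no82}: reinterpret the $\Hom$-space as a space of invariants, identify it with the invariants of the regular representation of $G$ under the scheme-theoretic image of $\psi$, and finish with a dimension count. Two preliminary reductions make this clean.

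\emph{Reduction to $\psi$.} First I would observe that $\phi$ is surjective if and only if $\psi$ is. The factorization $\phi=\psi\circ q$ recalled just before the statement is through the natural morphism $q:\pi^{\loc}(X)\to\Pi$, and $q$ is faithfully flat: by Theorem \ref{thm:esnault-hogadi} the maximal profinite quotient of $\pi_1(\Str(X,1),\omega_x)$ is $\im\bigl(F^{(1)}:\pi^N(X,x)\to\pi^N(X,x)^{(1)}\bigr)$, which is a quotient of $\pi^N(X,x)$; passing to maximal local quotients exhibits $\Pi$ as a quotient of $\pi^{\loc}(X)$. Hence, writing $H\Def\psi(\Pi)\subseteq G$ for the scheme-theoretic image of $\psi$ (a closed subgroup scheme), it suffices to prove that $\Dim_k\Hom_{\Str(X,1)}(\unit,h_\phi(k[G],\rho_{\rm reg}))=1$ if and only if $H=G$.

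\emph{The invariants computation.} Since $\Pi$ is a quotient of the Tannaka group of $\Str(X,1)$ with respect to $\omega_x$, the resulting inclusion $\Rep(\Pi)\hookrightarrow\Str(X,1)$ is a fully faithful exact tensor functor sending the unit to $\unit$; combined with the definition (\ref{eq:def of h}) of $h_\phi$ this gives
\[
\Hom_{\Str(X,1)}(\unit,h_\phi(k[G],\rho_{\rm reg}))=\Hom_{\Rep(\Pi)}\bigl(\unit,\psi^*(k[G],\rho_{\rm reg})\bigr)=\bigl(\psi^*(k[G],\rho_{\rm reg})\bigr)^{\Pi}.
\]
Factoring $\psi$ as $\Pi\twoheadrightarrow H\hookrightarrow G$ and using that $\Pi$-invariants of a representation pulled back from $H$ along a faithfully flat map agree with $H$-invariants, the right-hand side is $(k[G])^{H}$, the subspace of functions on $G$ fixed by translation by $H$, i.e. the coordinate ring $k[G/H]$ of the quotient homogeneous space.

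\emph{Conclusion and the main obstacle.} For the quotient of a finite $k$-group scheme by a closed subgroup scheme one has $\Dim_kk[G]=\Dim_kk[G/H]\cdot\Dim_kk[H]$, so $\Dim_k(k[G])^{H}=\Dim_kk[G]/\Dim_kk[H]\ge 1$, with equality precisely when $\Dim_kk[H]=\Dim_kk[G]$, that is when the closed subgroup scheme $H$ equals $G$; combined with the reduction step this is exactly the assertion. I expect the only points needing genuine care to be the faithful flatness of $q$ in the reduction step — this is where Theorem \ref{thm:esnault-hogadi} really enters, and one must make sure the factorization of $\phi$ recalled before the statement is through this particular quotient — and the (standard but not wholly trivial) multiplicativity $\Dim_kk[G]=\Dim_kk[G/H]\cdot\Dim_kk[H]$ for $G/H$, which should be cited from the theory of quotients of finite group schemes. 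Everything else is the routine Tannakian formalism.
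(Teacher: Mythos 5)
Your argument is correct and is exactly the ``standard Tannakian argument'' that the paper invokes via Nori \cite[Chapter II, Proposition 3]{no82}: reduce to $\psi$ using that $\Pi$ is a quotient of $\pi^{\loc}(X)$, identify $\Hom_{\Str(X,1)}(\unit,h_\phi(k[G],\rho_{\rm reg}))$ with the $H$-invariants $(k[G])^{H}=k[G/H]$ for the scheme-theoretic image $H$, and conclude by the multiplicativity of orders of finite group schemes. The paper gives no further details, so your write-up is essentially a correct expansion of its proof.
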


\begin{rem}
Since $h_{\phi}(k[G],\rho_{\rm reg})$ is an algebra object in $\Str(U,1)$, the $1$-stratified bundle $h_{\phi}(k[G],\rho_{\rm reg})$ admits a morphism from the unit object $\unit\to h_{\phi}(k[G],\rho_{\rm reg})$ corresponding to the unit element of the algebra. Thus the dimension $\Dim_k\Hom_{\Str(X,1)}(\unit,h_{\phi}(k[G],\rho_{{\rm reg}}))$ is always greater than or equal to $1$.
\end{rem}

If $\pi:P\to X$ is the $G$-torsor corresponding to the homomorphism $\phi:\pi^{\loc}(X)\to G$. Then one can describe the $1$-stratified bundle $h_{\phi}(k[G],\rho_{\rm reg})$ as follows~(cf.~\cite[Construction 4.1]{eh12}). We define $\{E^{(i)}\}_{i=1}^{\infty}$ by
\begin{equation*}
\begin{aligned}
E^{(i)}&\Def\calO_{X^{(i)}}\otimes k[G]=\calO_{X^{(i)}}^{\oplus\Dim k[G]},\quad i\ge 1;\\
E^{(0)}&\Def(\pi_*\calO_P\otimes k[G])^G.
\end{aligned}
\end{equation*}
Here $G$ acts on $\pi_*\calO_P\otimes k[G]$ by
\begin{equation*}
(P\times G)\times G\ni ((p, h),g)\mapsto (p\cdot g^{-1},h\cdot g)\in P\times G.
\end{equation*}
We define $\{\sigma^{(i)}\}_{i=0}^{\infty}$ by $\sigma^{(i)}\Def{\rm id}$ for $i\ge 1$ and $\sigma^{(0)}$ the canonical trivialization morphism:
\begin{equation}\label{eq:trivialization}
F^{(1)*}\bigl((\pi_*\calO_P\otimes k[G])^G\bigl)\xrightarrow{\simeq}
F^{(2)*}E^{(1)}=\calO_{X^{(-1)}}\otimes k[G].
\end{equation}
Here, since $G$ is of height one, the torsor $F^{(1)*}P$ is trivial and admits a canonical section $F^{(1)*}P\supset (F^{(1)*}P)_{\rm red}=X^{(-1)}$. The isomorphism $\sigma^{(0)}$ is the one corresponding to this section. 
We then have $h_{\phi}(k[G],\rho_{\rm reg})=\{E^{(i)},\sigma^{(i)}\}$. Note that $(k[G],\rho_{\rm reg})$ is a $G$-torsor object in $\Rep(G)$ and the functor $h_{\phi}$ is a tensor functor, hence $h_{\phi}(k[G],\rho_{\rm reg})$ gives a $G$-torsor object in $\Str(X,1)$. We also denote $P^{\text{$1$-strat}}\Def h_{\phi}(k[G],\rho_{\rm reg})$. Lemma \ref{lem:criterion} implies that $P$ is saturated if and only if $P^{\text{$1$-strat}}$ is connected as an algebra object in $\Str(X,1)$.

\begin{ex}
Let $m>0$ be an integer and $f:\G_m\to\G_m$ a $k$-morphism defined by $a\mapsto a^m$. Let $P^{\text{$1$-strat}}=\{E^{(i)},\sigma^{(i)}\}$ with $P\Def f^*\G_m^{(-1)}$ a $\mu_p=\Spec k[z]/(z^p-1)$-torsor. Then 
\begin{equation*}
E^{(0)}=\biggl(\frac{k[x,x^{-1},y]}{(y^p-x^{m})}\otimes \frac{k[z]}{(z^p-1)}\biggl)^{\mu_p}=\bigoplus_{j=0}^{p-1}k[x,x^{-1}]\cdot (\overline{yz})^j.
\end{equation*}
Therefore, the representation matrix $A\in \GL_p(k[x^{(-1)\pm 1}])$ of $\sigma^{(0)}$ with respect to the basis $\{\overline{yz}^j\,|\,0\le j\le p-1\}$ and $\{\overline{z}^{j}\,|\,0\le j\le p-1\}$ is given by the diagonal matrix:
\begin{equation*}
A=
\begin{pmatrix}
1 & 0                         &\cdots & 0\\
0 & {x^{(-1)}}^{m} & \cdots &0\\
&\cdots &\cdots &\\
0 & 0 & \cdots & {x^{(-1)}}^{m(p-1)}
\end{pmatrix}.
\end{equation*}
A direct computation then recover the following well-understood result:
\begin{equation*}
\Hom_{\Str(\G_m,1)}(\unit,P^{\text{$1$-strat}})~\simeq~
\begin{cases}
k,  &  \text{if $p\nmid m$},\\
k[\mu_p],  &  \text{if $p\mid m$}.
\end{cases}
\end{equation*}
\end{ex}

\begin{ex}
Let $n>0$ be an integer and $\underline{f}=(f_i(x))\in k[x]^{\oplus n}={\rm Mor}_k(\A^1_k,\G_a^{\oplus n})$ a $k$-morphism~(cf.~Proposition \ref{prop:toy example}). Let $P^{\str}=\{E^{(i)},\sigma^{(i)}\}$ with
\begin{equation*}
P\Def \underline{f}^*\G_a^{(-1)\oplus n}=\Spec \frac{k[x,y_i~(1\le i\le n)]}{(y_i^p-f_i(x)~(1\le i\le n))}
\end{equation*}
an $\alpha_p^{\oplus n}=\Spec k[z_i (1\le i\le n)]/(z_i^p (1\le i\le n))$-torsor. Then
\begin{equation*}
E^{(0)}=\bigoplus_{0\le j_1,\dots,j_n\le p-1}k[x]\cdot (\overline{y_1+z_1})^{j_1}\cdots (\overline{y_n+z_n})^{j_n}.
\end{equation*}
The representation matrix $A\in\GL_{p^n}(k[x^{(-1)}])$ of $\sigma^{(0)}$ with respect to the basis
\begin{equation*}
\{(\overline{y_1+z_1})^{j_1}\cdots (\overline{y_n+z_n})^{j_n}\},\quad
\{\overline{z_1}^{j_1}\cdots\overline{z_n}^{j_n}\}
\end{equation*}
is unipotent and, by a direct computation, one can verify that the condition that  $\{\overline{f_i(x)}\}_{i=1}^n\subset k[x]/k[x^p]$ is linearly independent over $k$ is equivalent to the condition that 
\begin{equation*}
\Dim_k\Hom_{\Str(\A^1_k,1)}(\unit,P^{\str})=1.
\end{equation*}
This then gives another proof of Proposition \ref{prop:toy example}.
\end{ex}

Now let us prove the main result of this subsection :

\begin{thm}\label{thm:main}
Assume $p=2$. Let
\begin{equation*}
\underline{f}=
\begin{pmatrix}
f_{22}(x)x^{-m}&-f_{12}(x)x^{-m}\\
-f_{21}(x)x^{-m}&f_{11}(x)x^{-m}
\end{pmatrix}
\in\GL_2(k[x^{\pm 1}])={\rm Mor}_k(\G_m,\GL_2)
\end{equation*}
be a $k$-morphism with $f_{ij}(x)\in k[x]~(1\le i,j\le 2)$ and 
\begin{equation*}
\det
\begin{pmatrix}
f_{11}(x)&f_{12}(x)\\
f_{21}(x)&f_{22}(x)
\end{pmatrix}
=x^m
\end{equation*}
for some $m\in\Z_{\ge 0}$. Let $P\Def\underline{f}^*\GL_2^{(-1)}$ be the resulting torsor over $\G_m$.\\
(1) In the case $2\mid m$, assume that one of the following conditions is satisfied:
\begin{itemize}
\item $\Dim_k\langle\overline{f_{11}},\,\overline{f_{21}}\rangle=\Dim_k\langle\overline{f_{11}f_{21}},\,\overline{f_{12}f_{22}}\rangle=2$;
\item 
$\Dim_k\langle\overline{f_{12}},\,\overline{f_{22}}\rangle=\Dim_k\langle\overline{f_{11}f_{21}},\,\overline{f_{12}f_{22}}\rangle=2$.
\end{itemize}
Then we have:
\begin{equation*}
\Hom_{\Str(\G_m,1)}(\unit,P^{\str})\simeq k[\mu_2].
\end{equation*}
(2) In the case $2\nmid m$, assume that one of the following
conditions is satisfied:
\begin{itemize}
\item $\Dim_{k}\langle\overline{f_{11}},\,\overline{f_{21}}\rangle=2$,  $\Dim_k\langle\overline{f_{11}f_{21}},\,\overline{f_{12}f_{22}}\rangle=\Dim_k\langle\overline{f_{11}f_{12}},\,\overline{f_{21}f_{22}},\,\overline{x^m}\rangle=3$;
\item $\Dim_{k}\langle\overline{f_{12}},\,\overline{f_{22}}\rangle=2$,  $\Dim_k\langle\overline{f_{11}f_{21}},\,\overline{f_{12}f_{22}}\rangle=\Dim_k\langle\overline{f_{11}f_{12}},\,\overline{f_{21}f_{22}},\,\overline{x^m}\rangle=3$
\end{itemize}
Then we have:
\begin{equation*}
\Hom_{\Str(\G_m,1)}(\unit,P^{\str})=k.
\end{equation*}
\end{thm}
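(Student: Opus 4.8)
\textbf{Proof strategy for Theorem \ref{thm:main}.} The plan is to compute the space of global homomorphisms from the unit object into the $1$-stratified algebra $P^{\str}$ by writing down the representation matrix $A$ of $\sigma^{(0)}$ explicitly and reading off the fixed vectors. Concretely, $P = \underline{f}^*\GL_2^{(-1)}$ is a $\GL_{2(1)}$-torsor over $\G_m$; since $\GL_{2(1)}$ has height one, Theorem \ref{thm:esnault-hogadi} applies and Lemma \ref{lem:criterion} reduces everything to estimating $\Dim_k\Hom_{\Str(\G_m,1)}(\unit, P^{\str})$. Following the recipe after Lemma \ref{lem:criterion} (and the two worked examples for $\mu_p$ and $\alpha_p^{\oplus n}$), I would take $E^{(0)} = (\pi_*\calO_P \otimes k[\GL_{2(1)}])^{\GL_{2(1)}}$ with an explicit $k[x]$-basis coming from the coordinate functions on $P$ twisted by the regular representation, and compute the trivialization isomorphism $\sigma^{(0)}$ over $X^{(-1)} = \Spec k[x^{(-1)\pm 1}]$. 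The upshot is that $A \in \GL_N(k[x^{(-1)\pm 1}])$ (with $N = 2^3$ the order of $\GL_{2(1)}$), and $\Hom_{\Str(\G_m,1)}(\unit, P^{\str})$ is identified with the set of column vectors $v \in k[x^{(-1)}, x^{(-1)-1}]^{\oplus N}$ satisfying $A \cdot (F^{(1)*}v) = v$ up to the appropriate Frobenius-pullback bookkeeping; since $p=2$ the matrix $A$ is a product of the $2\times 2$ pattern of $\underline f$ inside $\GL_2$ acting on the three-dimensional space of monomials defining $\GL_{2(1)}$, so $A$ is (block-)unipotent and the fixed-point equations become a finite linear system over $k$ whose solvability is governed precisely by the linear (in)dependence of the products $\overline{f_{ij}}$, $\overline{f_{i1}f_{i2}}$, $\overline{f_{1j}f_{2j}}$, $\overline{x^m}$ appearing in the hypotheses.

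The core computation splits along the parity of $m$ because the character group $\X(\GL_{2(1)})$ contributes a $\mu_2$-factor: when $2\mid m$ the determinant $1$-cocycle $x^m$ is a square in $k[x^\pm]$, so the $\mu_2 = \Det$-quotient torsor $\underline f^* \G_m^{(-1)} \to \G_m$ is itself split-by-squares, forcing a nontrivial character to survive and hence $\Hom_{\Str(\G_m,1)}(\unit, P^{\str}) \supseteq k[\mu_2]$; the two alternative dimension conditions on $\langle \overline{f_{11}}, \overline{f_{21}}\rangle$ (resp.\ the second row) together with $\langle\overline{f_{11}f_{21}}, \overline{f_{12}f_{22}}\rangle$ being $2$-dimensional are exactly what is needed to show the fixed space is no bigger than $k[\mu_2]$, i.e.\ that the image of $\pi^{\loc}(\G_m)$ in $\GL_{2(1)}$ is all of $\Ker(\Det) = \SL_{2(1)}$, which by Example \ref{ex:SL_2} has $\X = 1$. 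When $2\nmid m$, the character $x^m$ is not a square, the $\mu_2$-part is also realized, and the extra dimension condition $\Dim_k\langle \overline{f_{11}f_{12}}, \overline{f_{21}f_{22}}, \overline{x^m}\rangle = 3$ pins down the remaining coordinate so that the full $\GL_{2(1)}$ is hit and $\Hom_{\Str(\G_m,1)}(\unit, P^{\str}) = k$. In both cases I would organize the argument as: (i) peel off the $\mu_2$-quotient using the $\mu_p$-example computation applied to $\Det \circ \underline f = x^m$; (ii) study the induced $\SL_{2(1)}$- (or, for odd $m$, the kernel-of-a-different-character-) torsor via the $\alpha_2 \times \alpha_2$ central extension \eqref{eq:SL_2}, so the unipotent radical computations reduce to the $\alpha_p^{\oplus 2}$-example with the functions $\overline{f_{11}f_{21}}, \overline{f_{12}f_{22}}$ (from the map $\SL_{2(1)} \to \alpha_2 \times \alpha_2$, $(a,b;c,d)\mapsto(ab,cd)$ evaluated on $\underline f$); (iii) handle the remaining $\mu_2$-extension inside $\SL_{2(1)}$ by the obstruction/lifting argument of Proposition \ref{prop:nilpotent}, where the surjectivity onto that last $\mu_2$ is equivalent to the products $\overline{f_{11}f_{12}}, \overline{f_{21}f_{22}}, \overline{x^m}$ being independent.

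The main obstacle I anticipate is bookkeeping in step (ii)–(iii): making the diagonalization of the unipotent matrix $A$ compatible with the filtration of $k[\GL_{2(1)}]$ coming from the composition series $\mu_2 \subset \SL_{2(1)} \subset \GL_{2(1)}$, and correctly matching the entries of the associated graded pieces with the six quadratic expressions in the $f_{ij}$. One has to be careful that the relation $\det\begin{pmatrix} f_{11} & f_{12} \\ f_{21} & f_{22}\end{pmatrix} = x^m$ is used to rewrite $\overline{f_{11}f_{22}} = \overline{f_{12}f_{21}} + \overline{x^m}$ (in characteristic $2$ there is no sign), which is why $\overline{x^m}$ enters the third dimension condition and why only three of the naive four products $\overline{f_{1j}f_{2j'}}$ are independent data; keeping track of which products the hypothesis controls versus which are forced by the determinant relation is the delicate point. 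Once the graded computation is set up correctly, each layer is a finite-dimensional linear algebra statement over $k$ of exactly the type already verified in the $\mu_p$- and $\alpha_p^{\oplus n}$-examples, and the two displayed conclusions $\Hom_{\Str(\G_m,1)}(\unit, P^{\str}) \simeq k[\mu_2]$ (for $2\mid m$) and $= k$ (for $2\nmid m$) follow by Lemma \ref{lem:criterion} applied to the torsor and to its $\SL_{2(1)}$-subtorsor respectively.
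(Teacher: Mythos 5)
Your setup is the same as the paper's (Theorem \ref{thm:esnault-hogadi}, Lemma \ref{lem:criterion}, the explicit trivialization matrix of $\sigma^{(0)}$, and the identification of $\Hom_{\Str(\G_m,1)}(\unit,P^{\str})$ with a linear system over $k$), but at exactly the point where the paper does the work you substitute an unproven structural sketch, and that sketch has genuine gaps. The paper computes $E^{(0)}=k[x^{\pm1}][e_{ij}]$ via the untwisting isomorphism, writes the representation matrix $A\in\GL_{2^4}(k[x^{(-1)\pm1}])$ (note $\Dim_k k[\GL_{2(1)}]=2^4$, not $2^3$ as you say, and $A$ is not unipotent: it has blocks $C$, $B\otimes E$, $B\otimes B$, $B\otimes x^{(-1)m}E$, $B\otimes D$ with diagonal entries $x^{(-1)m}$, $x^m$), and then solves $A\cdot\underline{a}=\underline{b}$ coordinate by coordinate; it is in this solving process that every hypothesis is used, including the conditions $\Dim_k\langle\overline{f_{11}},\overline{f_{21}}\rangle=2$ (resp.\ $\Dim_k\langle\overline{f_{12}},\overline{f_{22}}\rangle=2$), which kill the coordinates $a_{13},\dots,a_{16}$ and $a_5,\dots,a_8$. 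Your proposal never explains where these single-entry hypotheses enter, which is a sign that the dévissage you describe does not capture the actual structure of the system. Saying that the solvability ``is governed precisely by the linear (in)dependence'' of the listed products is a restatement of the theorem, not a proof of it.

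Concerning the alternative route via $\mu_2\subset\SL_{2(1)}\subset\GL_{2(1)}$: (a) the map $(a,b;c,d)\mapsto(ab,cd)$ is a homomorphism only on the Frobenius kernel $\SL_{2(1)}$, so ``evaluating it on $\underline{f}$'' is not a meaningful operation; to identify the pushforward of the reduced $\SL_{2(1)}$-torsor along $\SL_{2(1)}\to\alpha_2\times\alpha_2$ one must compute $\mu_2$-invariants of $\pi_*\calO_P$, and doing so produces classes of the form $\overline{f_{11}f_{12}x^{-m}}$, $\overline{f_{21}f_{22}x^{-m}}$ rather than the column products $\overline{f_{11}f_{21}},\overline{f_{12}f_{22}}$ you assert; these conditions agree only because of the determinant identity
$\bigl(\begin{smallmatrix}f_{11}f_{12}\\ f_{21}f_{22}\end{smallmatrix}\bigr)=\bigl(\begin{smallmatrix}f_{12}^2&f_{11}^2\\ f_{22}^2&f_{21}^2\end{smallmatrix}\bigr)\bigl(\begin{smallmatrix}f_{11}f_{21}\\ f_{12}f_{22}\end{smallmatrix}\bigr)$,
which the paper proves and you would need to make explicit. (b) Your step (iii) misapplies Proposition \ref{prop:nilpotent}: that is an existence argument in which one is free to modify the lift (twist by a character), whereas here the homomorphism $\pi^{\loc}(\G_m)\to\GL_{2(1)}$ is fixed by $\underline{f}$ and you must determine its image; no lifting argument is available. (c) In the case $2\nmid m$ the image is not contained in $\SL_{2(1)}$ (indeed $\det\underline{f}=x^{-m}$, not $x^m$, and its class is nontrivial), so the composition series for $\SL_{2(1)}$ does not directly apply; to conclude the image is all of $\GL_{2(1)}$ you must exclude proper subgroup schemes of order $8$ other than $\SL_{2(1)}$ that also surject onto the determinant $\mu_2$ (e.g.\ Frobenius kernels of Borel subgroups), and your sketch says nothing about this — in the paper this is precisely where the hypothesis $\Dim_k\langle\overline{f_{11}f_{12}},\overline{f_{21}f_{22}},\overline{x^m}\rangle=3$ is consumed, in the $B\otimes B$ block. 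Until (a)--(c) are carried out, the proposal does not constitute a proof.
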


Here, for each $f\in k[x]$, $\overline{f}$ denotes by the image of $f$ in $k[x]/k[x^p]$.

\begin{proof}
Let $P^{\str}=\{E^{(i)},\sigma^{(i)}\}$.
Let $\pi:P\to\G_m$ be the structure morphism. We then have
\begin{equation*}
\pi_*\calO_P=\frac{k[x^{\pm 1},y_{ij}~(i,j=1,2)]}{(
y_{11}^2-f_{22}(x)x^{-m},y_{12}^2-f_{12}(x)x^{-m},
y_{21}^2-f_{21}(x)x^{-m},y_{22}^2-f_{11}(x)x^{-m}
)}.
\end{equation*}
Let $\rho_P$ (resp. $\rho_0$) be the coaction $\pi_*\calO_P\to\pi_*\calO_P\otimes k[\GL_{2(1)}]$ induced by the action $P\times\GL_{2(1)}\to P$ (resp. the one by the trivial action). Let $\iota$ the antipode of $k[\GL_{2(1)}]$. Then the composition 
\begin{equation}\label{eq:G-equiv}
\begin{aligned}
\pi_*\calO_P\otimes k[\GL_{2(1)}]&\xrightarrow{\rho\otimes{\rm id}}
\pi_*\calO_P\otimes k[\GL_{2(1)}]\otimes k[\GL_{2(1)}]\\
&\xrightarrow{{\rm id}\otimes\iota\otimes{\rm id}}
\pi_*\calO_P\otimes k[\GL_{2(1)}]\otimes k[\GL_{2(1)}]
\xrightarrow{{\rm id}\otimes m}\pi_*\calO_P\otimes k[\GL_{2(1)}]
\end{aligned}
\end{equation}
gives a $k[\GL_{2(1)}]$-comodule isomorphism
\begin{equation}\label{eq:G-equiv 2}
(\pi_*\calO_P,\rho_P)\otimes (k[\GL_{2(1)}],\rho_{\rm reg})\xrightarrow{\simeq}(\pi_*\calO_P,\rho_{0})\otimes (k[\GL_{2(1)}],\rho_{\rm reg}).
\end{equation}
If one write
$k[\GL_{2(1)}]=k[z_{ij}]/(z_{ij}^2-\delta_{ij})$, then
\begin{equation*}
\begin{pmatrix}
\iota(z_{11}) & \iota(z_{12})\\
\iota(z_{21}) & \iota(z_{22})
\end{pmatrix}
=
\begin{pmatrix}
z_{11} & z_{12}\\
z_{21} & z_{22}
\end{pmatrix}^{-1}.
\end{equation*}
Therefore, via the map (\ref{eq:G-equiv}), the element
\begin{equation*}
e_{ij}\Def\sum_{q=1}^2 y_{iq}\otimes z_{qj}\in\pi_*\calO_P\otimes k[\GL_{2(1)}]
\end{equation*}
is mapped to $y_{ij}\otimes 1$, which belongs to 
\begin{equation*}
\bigl((\pi_*\calO_P,\rho_{0})\otimes (k[\GL_{2(1)}],\rho_{\rm reg})\bigl)^{\GL_{2(1)}}=k[x^{\pm 1}][ y_{ij}\otimes 1\,|\,1\le i, j\le 2].
\end{equation*}
Hence, by the equation (\ref{eq:G-equiv 2}), we find that
\begin{equation*}
E^{(0)}=k[x^{\pm 1}][e_{ij}\,|\,1\le i,j\le 2].
\end{equation*}
Notice that $E^{(i)}\simeq k[x^{\pm 1}]^{\oplus 2^4}$ with free basis $\{e_{11}^{m_{11}}e_{12}^{m_{12}}
e_{21}^{m_{21}}e_{22}^{m_{22}}\,|\,m_{ij}=0,1\}$.

Let $A\in\GL_{2^4}(k[x^{(-1)\pm 1}])$ be the representation matrix of the inverse isomorphism $\sigma^{(0)-1}$ with respect to the basis
\begin{equation*}
\begin{aligned}
& 1,\,\nu_{11}\nu_{21},\,\nu_{12}\nu_{22},\,\nu_{11}\nu_{12}\nu_{21}\nu_{22},;\\
&\nu_{11},\,\nu_{12},\,\nu_{21},\,\nu_{22},\\
&\nu_{11}\nu_{12},\,\nu_{11}\nu_{22},\,\nu_{12}\nu_{21},\nu_{21}\nu_{22};\\
&\nu_{11}\nu_{12}\nu_{21},\,\nu_{11}\nu_{12}\nu_{22},\,\nu_{11}\nu_{21}\nu_{22},\,\nu_{12}\nu_{21}\nu_{22}
\end{aligned}
\end{equation*}
for $\nu\in\{e,z\}$. One then obtain:
\begin{equation*}
A~=~
\begin{pmatrix}
C& O& O& O&\\
& B\otimes E & O&B\otimes D&\\
& & B\otimes B &O&\\
&O & & B\otimes x^{(-1)m} E
\end{pmatrix},
\end{equation*}
with
\begin{equation*}
\begin{aligned}
&B=
\begin{pmatrix}
f'_{11}(x^{(-1)})&f'_{21}(x^{(-1)})\\
f'_{12}(x^{(-1)})&f'_{22}(x^{(-1)})
\end{pmatrix}
;\quad E=
\begin{pmatrix}
1 & 0\\
0 & 1
\end{pmatrix}
;\\
&C=
\begin{pmatrix}
1& f'_{11}(x^{(-1)})f'_{21}(x^{(-1)}) & f'_{12}(x^{(-1)})f'_{22}(x^{(-1)})&*\\
0&x^{(-1)m} & 0 & f'_{12}(x^{(-1)})f'_{22}(x^{(-1)})x^{(-1)m}\\
0&0&x^{(-1)m}  & f'_{11}(x^{(-1)})f'_{21}(x^{(-1)})x^{(-1)m}\\
0&0&0&x^{m}
\end{pmatrix};\\
&D=
\begin{pmatrix}
0& f'_{12}(x^{(-1)})f'_{22}(x^{(-1)})\\
f'_{11}(x^{(-1)})f'_{21}(x^{(-1)})& 0
\end{pmatrix},
\end{aligned}
\end{equation*}
where $\{f'_{ij}(x^{(-1)})\}$ is defined by:
\begin{equation*}
f'_{ij}(x^{(-1)})^2=f_{ij}(x).
\end{equation*}
Here, notice that $x^{(-1)2}=x$ and that $\det B=x^{(-1)m}$. Furthermore, the assumption on $\{f_{ij}(x)\}$ implies the same condition on $\{f'_{ij}(x^{(-1)})\}$.

Now one can reduce the problem to calculating the right hand side of the following equation:
\begin{equation*}
\Hom_{\Str(\G_m,1)}(\unit,P^{\str})\simeq\bigl\{(\underline{a},\underline{b})\in k^{\oplus 2^4}\times k[x^{\pm 1}]^{\oplus 2^4}\,\bigl |\,A\cdot\underline{a}=\underline{b}\bigl\}.
\end{equation*}

First we consider the case where $2\mid m$. In this case, without loss of generality, we may assume that $m=0$. Note that the condition that $\det(f_{ij}(x))=1$ implies that
the set
\begin{equation*}
\{\overline{f_{11}f_{12}},\,\overline{f_{21}f_{22}}\}
\end{equation*}
is also linearly independent over $k$. Indeed, this follows from the equation:
\begin{equation*}
\begin{pmatrix}
f_{11}f_{12}\\
f_{21}f_{22}
\end{pmatrix}
=
\begin{pmatrix}
f^2_{12}&f^2_{11}\\
f^2_{22}&f^2_{21}
\end{pmatrix}
\begin{pmatrix}
f_{11}f_{21}\\
f_{12}f_{22}
\end{pmatrix}.
\end{equation*}
We will solve the simultaneous equations $A\cdot \underline{a}=\underline{b}$ from the bottom. Then the condition that the set $\{\overline{f'_{11}},\,\overline{f'_{21}}\}$, or $\{\overline{f'_{12}},\,\overline{f'_{22}}\}$ is linearly independent implies that
\begin{equation*}
a_i=0\quad (13\le i\le 16).
\end{equation*}
Moreover, the conditions that $\{\overline{f'_{11}f'_{12}},\,\overline{f'_{21}f'_{22}}\}$ is linearly independent and that $\det B=f'_{11}f'_{22}-f'_{12}f'_{21}=1$ imply that
\begin{equation*}
\begin{aligned}
&a_i=0\quad(i=9,12),\\
&a_{10}=a_{11}.
\end{aligned}
\end{equation*}
By solving the equations
\begin{equation*}
\begin{pmatrix}
C&O\\
O&B\otimes E
\end{pmatrix},
\end{equation*}
again combined with the assumption on $\overline{f'_{ij}}$, we have
\begin{equation*}
a_i=0\quad (2\le i\le 8).
\end{equation*}
All the above computations then imply
\begin{equation*}
\Hom_{\Str(\G_m,1)}(\unit,P^{\str})\simeq k\oplus k\cdot \det(z_{ij})\simeq k[\mu_2].
\end{equation*}

Finally, let us consider the case where $2\nmid m$. In this case, the equations in the part $B\otimes x^{(-1)m}E$ cannot imply $a_{i}=0~(13\le i\le 16)$. 
However, by solving the simultaneous equations
\begin{equation*}
\begin{pmatrix}
O&B\otimes E&O&B\otimes D\\
\end{pmatrix}\underline{a}=\underline{b},
\end{equation*}
we can conclude that
\begin{equation*}
a_i=0\quad (5\le i\le 8~\text{or $13\le i\le 16$}).
\end{equation*}
Next let us solve the part $C$. Notice that $\{\overline{x^{(-1)m}},\,\overline{f'_{11}f'_{21}x^{(-1)m}}\}$ is linearly independent over $k$. Thus we find that $a_4=0$. On the other hand, by the assumption that $\{\overline{f'_{11}f'_{21}},\,\overline{f'_{12}f'_{22}}\}$ is linearly independent, we also have $a_2=a_3=0$.
Finally let us solve the part $B\otimes B$. Then the condition that $\overline{f'_{11}f'_{21}}\neq 0$ in $k[x^{(-1)\pm 1}]/k[x^{(-1)\pm 2}]$ implies that $a_{10}=a_{11}$. Then the condition that $\{\overline{f_{11}f_{12}},\,\overline{f_{21}f_{22}},\,\overline{x^m}\}$ is linearly independent over $k$ implies that $a_{9}=a_{10}=a_{11}=a_{12}=0$. Therefore, we can conclude that
\begin{equation*}
\Hom_{\Str(\G_m,1)}(\unit,P^{\str})=k.
\end{equation*}
\end{proof}

As an immediate consequence of Theorem \ref{thm:main} (or its proof), we have:

\begin{cor}\label{cor:SL_2}
Assume $p=2$. Let
\begin{equation*}
\underline{f}=
\begin{pmatrix}
f_{22}(x)&f_{12}(x)\\
f_{21}(x)&f_{11}(x)
\end{pmatrix}
\in\SL_2(k[x])={\rm Mor}_k(\A^1_k,\SL_2)
\end{equation*}
be a $k$-morphism. Then the resulting $\SL_{2(1)}$-torsor $\underline{f}^*\SL_{2}^{(-1)}\to\A^1_k$ is saturated if and only if one of the following conditions is satisfied:
\begin{itemize}
\item $\Dim_k\langle\overline{f_{11}},\,\overline{f_{21}}\rangle=\Dim_k\langle\overline{f_{11}f_{21}},\,\overline{f_{12}f_{22}}\rangle=2$;
\item 
$\Dim_k\langle\overline{f_{12}},\,\overline{f_{22}}\rangle=\Dim_k\langle\overline{f_{11}f_{21}},\,\overline{f_{12}f_{22}}\rangle=2$.
\end{itemize}
\end{cor}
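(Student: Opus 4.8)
The plan is to deduce the corollary from Theorem~\ref{thm:main}(1) by comparing torsors for $\SL_{2(1)}$ and for $\GL_{2(1)}$. Given $\underline f\in\SL_2(k[x])$, regard it as a $k$-morphism $\A^1_k\to\GL_2$ via $\SL_2\hookrightarrow\GL_2$, and let $Q\Def\underline f^*\GL_2^{(-1)}$ be the resulting $\GL_{2(1)}$-torsor over $\A^1_k$; it is the torsor induced from $P\Def\underline f^*\SL_2^{(-1)}$ along $\SL_{2(1)}\hookrightarrow\GL_{2(1)}$. Since $p=2$, the determinant gives a short exact sequence $1\to\SL_{2(1)}\to\GL_{2(1)}\xrightarrow{\det}\mu_2\to 1$, and because $\det\circ\underline f=1$ the composite $\pi^{\loc}(\A^1_k)\xrightarrow{\phi_Q}\GL_{2(1)}\xrightarrow{\det}\mu_2$ classifies the trivial $\mu_2$-torsor, hence is trivial; thus $\phi_Q$ factors through $\SL_{2(1)}$ and has the same image $H\subseteq\SL_{2(1)}$ as the homomorphism $\phi_P$ classifying $P$. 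In particular $P$ is saturated if and only if $H=\SL_{2(1)}$.

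I would then translate this into the Tannakian language of Section~\ref{sec:main}. As $\phi_Q$ has target a group scheme of height one, the construction preceding Lemma~\ref{lem:criterion} applies to it, and Nori's standard computation (\cite[Chapter~II, Proposition~3]{no82}) identifies $\Hom_{\Str(\A^1_k,1)}(\unit,Q^{\str})$ with $k[\GL_{2(1)}]^{H}\cong k[H\backslash\GL_{2(1)}]$, a $k$-vector space of dimension $|\GL_{2(1)}|/|H|=16/|H|$. Since $|H|$ divides $|\SL_{2(1)}|=8$ and the only subgroup scheme of $\SL_{2(1)}$ of order $8$ is $\SL_{2(1)}$ itself, we get: $P$ is saturated $\iff|H|=8\iff\Dim_k\Hom_{\Str(\A^1_k,1)}(\unit,Q^{\str})=2$; and in the saturated case this Hom-space is the Hopf algebra $k[\GL_{2(1)}/\SL_{2(1)}]=k[\mu_2]$, whereas if $P$ is not saturated the dimension is at least $4$.

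It remains to match $\Dim_k\Hom_{\Str(\A^1_k,1)}(\unit,Q^{\str})$ with the computation in Theorem~\ref{thm:main}. Because $\det(\underline f)=1$, after the sign change which is harmless in characteristic~$2$, $\underline f$ is exactly a morphism of the type treated there with $m=0$, and its restriction to $\G_m$ gives the torsor called $P$ in Theorem~\ref{thm:main}; moreover for $m=0$ the matrix $A$ appearing in that proof has entries in $k[x^{(-1)}]$ (no negative powers of $x^{(-1)}$), so the linear system $A\underline a=\underline b$ describing $\Hom_{\Str}(\unit,(-)^{\str})$ is literally the same over $\A^1_k$ as over $\G_m$. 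Hence $\Hom_{\Str(\A^1_k,1)}(\unit,Q^{\str})\cong\Hom_{\Str(\G_m,1)}(\unit,P^{\str})$, and Theorem~\ref{thm:main}(1) with $m=0$ immediately gives the ``if'' direction of the corollary. For the ``only if'' direction I would run the proof of Theorem~\ref{thm:main}(1) backwards: if one of the linear-independence conditions on the $\overline{f_{ij}}$ fails, the corresponding step of the back-substitution no longer forces the relevant coordinates $a_i$ to vanish (resp.\ no longer forces $a_{10}=a_{11}$), so the solution space acquires an extra free parameter, $\Dim_k\Hom_{\Str}(\unit,Q^{\str})>2$, and $P$ is not saturated.

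The hard part is this last ``only if'' step: I must check that an extra solution produced by a violated hypothesis really survives, i.e.\ is not subsequently eliminated by another block of the system. This needs some care, since the off-diagonal blocks $B\otimes D$ couple the coordinates $a_5,\dots,a_8$ with $a_{13},\dots,a_{16}$, and the identity $\det(f_{ij})=1$ ties the condition on $\langle\overline{f_{11}f_{21}},\overline{f_{12}f_{22}}\rangle$ to that on $\langle\overline{f_{11}f_{12}},\overline{f_{21}f_{22}}\rangle$; I would therefore split into cases according to which independence condition is violated and, in each, exhibit explicitly a nonzero $\underline a$ in the solution space not proportional to $\det(z_{ij})$.
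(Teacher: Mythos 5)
Your proposal follows essentially the same route as the paper: reduce to the $\GL_2$-computation of Theorem \ref{thm:main} with $m=0$ (using $\det\underline{f}=1$ and $p=2$ to absorb signs) and use the Tannakian dimension count $\Dim_k\Hom_{\Str(\G_m,1)}(\unit,Q^{\str})=2$ to identify the image of the classifying homomorphism with $\SL_{2(1)}$; your additional remarks (the factorization through $\SL_{2(1)}$ via the determinant sequence, and the coincidence of the solution spaces over $\A^1_k$ and $\G_m$ because the matrix $A$ has polynomial entries when $m=0$) just make explicit what the paper leaves implicit. The ``only if'' case analysis you flag as still to be checked is exactly what the paper also delegates to ``the argument in the proof of Theorem \ref{thm:main}'' (re-running the back-substitution to exhibit extra solutions when an independence hypothesis fails), so this is not a gap relative to the paper's own proof.
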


\begin{proof}
By considering the composition
\begin{equation*}
\G_m\hookrightarrow\A^1_k
\xrightarrow{\underline{f}}\SL_2\hookrightarrow\GL_2,
\end{equation*}
we are reduced to the case where $\underline{f}:\G_m\to\GL_2$ with $\det(\underline{f})=1$. Thus the statement follows from the argument in the proof of Theorem  \ref{thm:main}, noticing that the condition that
\begin{equation*}
\Dim_k\Hom_{\Str(\G_m,1)}(\unit,P^{\str})=2
\end{equation*}
implies the image of the homomorphism $\pi^{\loc}(\G_m)\to\GL_{2(1)}$ corresponding to the torsor $P=\underline{f}^*\GL_2^{(-1)}\to\G_m$ coincides with $\SL_{2(1)}$.\\
\end{proof}

\begin{rem}
For example, the embedding
\begin{equation*}
\A^1_k\ni a\mapsto
\begin{pmatrix}
1+a^2+a^3&a\\
a+a^2&1
\end{pmatrix}
\in\SL_2
\end{equation*}
satisfies the condition in Corollary \ref{cor:SL_2}.
\end{rem}

Combined with Lemma \ref{lem:any ht}, Theorem \ref{thm:main} also implies:

\begin{cor}\label{cor:GL_2}
Assume $p=2$. Then, there exists a surjective homomorphism
\begin{equation*}
\pi^{\loc}(\G_m)\twoheadrightarrow\varprojlim_{r>0}\GL_{2(r)}.
\end{equation*}
In particular, for any $r>0$, the $r$-th Frobenius kernel $\GL_{2(r)}$ appears as a finite quotient of $\pi^{\loc}(\G_m)$.
\end{cor}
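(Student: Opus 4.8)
The plan is to deduce Corollary~\ref{cor:GL_2} from Theorem~\ref{thm:main}(2) together with Lemma~\ref{lem:any ht}. The key reduction is that it suffices to produce a single $k$-morphism $\underline{f}\colon\G_m\to\GL_2$ for which the height-one torsor $P\Def\underline{f}^*\GL_2^{(-1)}\to\G_m$ is saturated. Indeed, once such an $\underline{f}$ is available, Lemma~\ref{lem:any ht} gives that $\underline{f}^*\GL_2^{(-r)}\to\G_m$ is saturated for every $r>0$, i.e.\ the associated homomorphism $\phi_r\colon\pi^{\loc}(\G_m)\twoheadrightarrow\GL_{2(r)}$ is surjective. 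These $\phi_r$ are compatible with the canonical projections $\GL_{2(r+1)}\to\GL_{2(r)}$ (kernel $\GL_{2(1)}^{(-r)}$), since this compatibility is exactly the commutativity recorded in diagram~(\ref{eq:any ht}) in the proof of Lemma~\ref{lem:any ht}; hence they assemble into a homomorphism $\phi\colon\pi^{\loc}(\G_m)\to\varprojlim_{r>0}\GL_{2(r)}$. As each composite of $\phi$ with a projection onto $\GL_{2(r)}$ is surjective and a surjection of affine group schemes over a field is faithfully flat, the scheme-theoretic image of $\phi$ is a closed subgroup scheme surjecting onto every term of the inverse system, hence equals $\varprojlim_{r>0}\GL_{2(r)}$; this proves the first assertion, and the ``in particular'' clause follows by composing $\phi$ with one projection.

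It remains to produce the morphism $\underline{f}$, and here the plan is simply to exhibit one and invoke Theorem~\ref{thm:main}(2). Concretely, I would take $\underline{f}$ of the form displayed there with $(f_{ij})=\begin{pmatrix}1+x^3&1+x+x^4\\x&x^2\end{pmatrix}$, so that $\det(f_{ij})=x$ and $m=1$ is odd. One then verifies the three linear-independence conditions in $k[x]/k[x^2]$ required by the first bullet of Theorem~\ref{thm:main}(2): $\overline{f_{11}}=x^3$ and $\overline{f_{21}}=x$ are independent; $\overline{f_{11}f_{21}}=x$ and $\overline{f_{12}f_{22}}=x^3$ are independent; and $\overline{f_{11}f_{12}}=x+x^3+x^7$, $\overline{f_{21}f_{22}}=x^3$, $\overline{x^m}=x$ span a $3$-dimensional subspace. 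Theorem~\ref{thm:main}(2) then gives $\Hom_{\Str(\G_m,1)}(\unit,P^{\str})=k$, which by Lemma~\ref{lem:criterion} says exactly that the homomorphism $\pi^{\loc}(\G_m)\to\GL_{2(1)}$ attached to $P=\underline{f}^*\GL_2^{(-1)}$ is surjective, i.e.\ that $P$ is saturated.

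The one genuinely delicate point is the choice of $\underline{f}$ and the verification of the hypotheses of Theorem~\ref{thm:main}(2): in characteristic $2$ many of the pairwise products of the $f_{ij}$ become perfect squares (hence $0$ in $k[x]/k[x^2]$) or scalar multiples of $\overline{x^m}$, and either phenomenon collapses one of the dimension counts. So the real work is in arranging the entries so that the odd-degree parts of $f_{11}f_{21}$, $f_{12}f_{22}$, $f_{11}f_{12}$ and $f_{21}f_{22}$ sit in sufficiently separated degrees --- both from one another and from $x^m$ --- while still keeping $\det(f_{ij})=x^m$ with $m$ odd. Once a suitable $\underline{f}$ is in hand, the remainder is the formal descent through the Frobenius tower explained in the first paragraph.
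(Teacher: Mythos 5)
Your proposal is correct and takes essentially the same route as the paper: exhibit one explicit $\underline{f}\colon\G_m\to\GL_2$ meeting the hypotheses of Theorem \ref{thm:main}(2) (your choice with $\det(f_{ij})=x$, $m=1$ odd, does satisfy the first bullet, as your computations in $k[x]/k[x^2]$ show), then use Lemma \ref{lem:any ht} to climb the Frobenius tower and assemble the compatible surjections into a surjection onto $\varprojlim_{r>0}\GL_{2(r)}$. The paper's proof differs only in using a different explicit matrix and leaving the inverse-limit assembly implicit.
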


\begin{proof}
It suffices to find a $k$-morphism $\underline{f}\in\GL_2(k[x^{\pm 1}])={\rm Mor}_k(\G_m,\GL_2)$ satisfying the condition of Theorem \ref{thm:main}. The morphism
\begin{equation*}
\G_m\ni a\mapsto
\begin{pmatrix}

\frac{1}{a}+\frac{1}{a^4}& \frac{1}{a^2}\\
1& \frac{1}{a}
\end{pmatrix}\in\GL_2
\end{equation*}
gives such one.
\end{proof}

\begin{rem}
(1) Corollary \ref{cor:SL_2}, combined with Lemma \ref{lem:any ht}, gives an answer to Question \ref{ques:main}(2) for the pair $(U,\Sigma)=(\A_k^1,\SL_2)$.\\
(2) Theorem \ref{thm:main} and the proof of Corollary \ref{cor:GL_2}, combined with Lemma \ref{lem:any ht},  gives an affirmative answer to Question \ref{ques:main}(1) for the pair $(U,\Sigma)=(\G_m,\GL_2)$ in the case where $k$ is of characteristic $p=2$.
Furthermore, since $\X(\GL_{2(r)})=\Z/2^r\Z\subset\Q_2/\Z_2$~(Note that the first equality follows from the exactness of $1\to\SL_{2(r)}\to\GL_{2(r)}\to\mu_{p^r}\to 1$ and $\X(\SL_{2(r)})=1$), Corollary \ref{cor:GL_2} gives an affirmative answer to Question  \ref{ques:purely insep} for $\G_m$ and for $\GL_{2(r)}~(r>0)$ in the case where $k$ is of characteristic $p=2$. However, we have restricted our attension to a special class of $k$-morphisms $\G_m\to\GL_2$, so it is not enough to give a complete  answer to Question \ref{ques:main}(2).
\end{rem}

\subsection{Bertini type theorem for finite local torsors and its application}

Finally let us prove a purely inseparable analogue of Bertini type theorem~(cf.~\cite{jo83}). As an application, we will give an affirmative answer to Question \ref{ques:main}(1)  for the pair $(\A_k^1,\Sigma)$ with $\Sigma$ a semi-simple simply connected algebraic group over $k$, whence, to Question \ref{ques:purely insep} for $\A_k^1$ and for $\Sigma_{(r)}~(r>0)$.

\begin{thm}\label{thm:bertini}
Let $n\ge 2$ be an integer. Let $k$ be a perfect field of positive characteristic $p>0$. Let $G$ be a finite local $k$-group scheme of height one and $\pi:P\to\A^n_k$ be a saturated $G$-torsor. Then there exists a closed immersion $\iota:\A^{n-1}_k\hookrightarrow\A^n_k$ such that the $G$-torsor $\iota^*P\to\A^{n-1}_k$ obtained by pulling back $P$ along $\iota$ is saturated as well.
\end{thm}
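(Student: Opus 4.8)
The plan is to translate saturatedness, for a height-one $G$, into the Tannakian language of Lemma~\ref{lem:criterion}, reduce it to an explicit ``Frobenius-integrality'' system of linear equations attached to the matrix of $\sigma^{(0)}$, and then run the classical generic-hyperplane argument, the new point being that the relevant genericity is a Bertini statement about monomials in characteristic $p$.

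\emph{Step 1: reformulation.} Since $G$ has height one, Lemma~\ref{lem:criterion} says a $G$-torsor $Q\to Y$ over a smooth $k$-variety is saturated iff $\Dim_k\Hom_{\Str(Y,1)}(\unit,Q^{\str})=1$, and $(-)^{\str}$ commutes with pullback. Write $P^{\str}=\{E^{(i)},\sigma^{(i)}\}$ on $\A^n_k$: all $\sigma^{(i)}=\mathrm{id}$ for $i\ge 1$, each $E^{(i)}$ $(i\ge 1)$ is free of rank $N\Def\Dim_kk[G]$, and $E^{(0)}$ is locally free of rank $N$ over $\calO_{\A^n_k}$, hence free (Quillen--Suslin); choosing bases makes $\sigma^{(0)}$ a matrix $A\in\GL_N(k[x_1^{(-1)},\dots,x_n^{(-1)}])$. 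As in the examples following Lemma~\ref{lem:criterion} and in the proof of Theorem~\ref{thm:main}, one then has
\[
\Hom_{\Str(\A^n_k,1)}(\unit,P^{\str})\;\simeq\;\Bigl\{\underline a\in k^{\oplus N}\ \Big|\ A\,\underline a\in k\bigl[(x_1^{(-1)})^p,\dots,(x_n^{(-1)})^p\bigr]^{\oplus N}\Bigr\}\;=:\;W,
\]
the space of constant vectors whose image under $A$ involves only $p$-th powers of the twisted coordinates; the hypothesis is $\Dim_kW=1$, spanned by a vector $\underline a_0$ (the unit of the algebra object $P^{\str}$). After a linear change of coordinates, a general affine-linear closed immersion $\iota:\A^{n-1}_k\hookrightarrow\A^n_k$ is the substitution $x_n\mapsto\ell=c_0+\sum_{j<n}c_jx_j$, i.e. $x_n^{(-1)}\mapsto\ell^{(-1)}=c_0^{1/p}+\sum_{j<n}c_j^{1/p}x_j^{(-1)}$, and $W\subseteq W^{\iota}\Def\{\underline a\mid A^{\iota}\,\underline a\in k[(x_1^{(-1)})^p,\dots,(x_{n-1}^{(-1)})^p]^{\oplus N}\}$; the goal is to find $\iota$ with $\Dim_kW^{\iota}=1$.

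\emph{Step 2: reduction to a Bertini statement.} For any $\iota$ the assignment $\underline a\mapsto(A^{\iota}\underline a)_{\mathrm{bad}}$ (the part of $A^{\iota}\underline a$ spanned by monomials with some exponent prime to $p$) is a $k$-linear map varying polynomially in $(c_j)$, so $\{\iota\mid\Dim_kW^{\iota}\ge 2\}$ is a closed subset of the $n$-dimensional variety of affine hyperplanes; it suffices to show it is proper. Fix a complement $V$ of $k\underline a_0$ in $k^{\oplus N}$; for $0\neq\underline a\in V$ put $v\Def A\underline a$, which is not a vector of $p$-th powers since $\underline a\notin W$, and split $v=v_{\mathrm{good}}+v_{\mathrm{bad}}$ with $v_{\mathrm{bad}}\neq 0$. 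As $k$ is perfect, $v_{\mathrm{good}}$ is a vector of $p$-th powers, and since a $p$-th power of a linear form is again one, $v^{\iota}_{\mathrm{good}}$ stays a vector of $p$-th powers; hence $\underline a\in W^{\iota}$ forces $v_{\mathrm{bad}}^{\iota}$ to be a vector of $p$-th powers. So it is enough to prove: if $w\neq0$ is a polynomial vector over $k$ in $x_1^{(-1)},\dots,x_n^{(-1)}$ every monomial of which has an exponent prime to $p$, then for general $(c_j)$ the restriction $w^{\iota}$ still has such a monomial.

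\emph{Step 3: monomial survival.} For this I would pick a monomial $x_1^{(-1)\beta_1}\cdots x_n^{(-1)\beta_n}$ of $w$ with some exponent prime to $p$, specialize the slice to $x_n^{(-1)}\mapsto sx_{j_0}^{(-1)}+c_0^{1/p}$ (with $(s,c_0)$ parameters, $j_0<n$ chosen according to the position of a prime-to-$p$ exponent), expand $(sx_{j_0}^{(-1)}+c_0^{1/p})^{\beta_n}=\sum_i\binom{\beta_n}{i}s^ic_0^{(\beta_n-i)/p}x_{j_0}^{(-1)i}$, and track the one resulting target monomial to which the chosen monomial contributes with a prescribed bidegree in $(s,c_0^{1/p})$. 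A short check using Lucas' congruence for the $\binom{\beta_n}{i}$ produces a target monomial that is again ``bad'' and whose coefficient, as a polynomial in $(s,c_0^{1/p})$, contains a monomial coming only from the chosen monomial of $w$ --- because monomials of $w$ contributing to a fixed target with a fixed bidegree must share all exponents --- so that coefficient is not identically zero. A general $(s,c_0)$, hence a general $(c_j)$, then keeps $w^{\iota}$ bad. This makes $\{\iota\mid\Dim_kW^{\iota}\ge2\}$ proper, and over an infinite $k$ (in particular an algebraically closed one, as in the applications) it yields a $k$-rational $\iota$; over a finite base field one passes to $\overline k$ or bounds the proper, $k$-defined bad locus by a point count.

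\emph{Main obstacle.} The genuinely delicate point is Step~3: controlling how monomials can cancel under a general affine-linear substitution in the twisted variables, so that no ``bad'' exponent class modulo $p$ disappears --- and, in particular, making ``properness of the bad locus'' work uniformly over the finite-dimensional family of obstruction vectors $\{(A\underline a)_{\mathrm{bad}}\mid\underline a\in V\}$ rather than one vector at a time. Everything else --- the passage to $\Str(-,1)$, the explicit shape of the $\Hom$-space via $\sigma^{(0)}$, and the semicontinuity making the bad locus closed --- is formal, and the whole phenomenon is special to characteristic $p$ (in characteristic zero the Frobenius-integrality condition is vacuous, so there is nothing to prove).
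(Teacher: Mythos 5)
Your Step 1 is exactly the paper's reduction (height-one Tannakian criterion, Quillen--Suslin freeness of $E^{(0)}$, the $\Hom$-space as the constant vectors sent by the matrix of $\sigma^{(0)}$ into $p$-th powers). The gap is in Steps 2--3: you restrict to \emph{affine-linear} closed immersions and try to win by a generic-hyperplane argument, and this cannot work --- not merely because the uniformity over the family of obstruction vectors is delicate (as you flag), but because for linear slices the statement is actually false. Concretely, take $p=2$, $n=2$, $G=\alpha_2^{\oplus m}$ and let $f_1,\dots,f_m$ be all monomials $x^iy^j$ with $i+j\le d$ and $i$ or $j$ odd; their classes in $k[x,y]/k[x^2,y^2]$ are linearly independent, so the associated torsor over $\A^2_k$ is saturated (the two-variable version of Proposition \ref{prop:toy example}), and $m$ grows like a constant times $d^2$. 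Any affine-linear immersion $\iota:\A^1_k\hookrightarrow\A^2_k$ pulls each $f_i$ back to a polynomial of degree $\le d$ in $t$, and the space of such polynomials modulo $k[t^2]$ has dimension at most about $d/2<m$; hence the pullbacks are linearly dependent in $k[t]/k[t^2]$ and $\iota^*P$ is \emph{never} saturated, for every choice of the coefficients $(c_j)$. So the closed ``bad locus'' in your parameter space of hyperplanes can be everything, and no amount of Lucas-type bookkeeping for a single obstruction vector $w$ can repair this: the per-vector genericity does not intersect to a common good slice when the family of vectors is positive-dimensional.

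The paper avoids this by allowing a closed immersion of high degree: it takes the graph of $g=\prod_{i=1}^{n-1}(x_i^{p^N}+x_i^M)$ with $p\nmid M$, $M>d$ and $p^N>d(M+1)$, and proves (Lemma \ref{lem:bertini}) that the substitution $x_n\mapsto g$ maps any prescribed finite-dimensional subspace of $k[x_1,\dots,x_n]/k[x_1^p,\dots,x_n^p]$ injectively into $k[x_1,\dots,x_{n-1}]/k[x_1^p,\dots,x_{n-1}^p]$; applying this to the (single, summed) subspace spanned by the entries of the matrix $B$ handles all obstruction vectors $\underline b$ at once, which is precisely the uniformity your approach lacks. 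Note also that this explicit choice of $g$ works over any perfect field, with no genericity or passage to $\overline{k}$ needed. If you want to salvage your plan, you must enlarge the family of slices from hyperplanes to graphs of polynomials of sufficiently large degree (depending on the degrees appearing in $A$), at which point you are essentially forced into the paper's Lemma \ref{lem:bertini}.
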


To prove this theorem, we will rely on the Tannakian  interpretation~(cf.~Theorem \ref{thm:esnault-hogadi}) again. Then we are reduced to showing the following lemma of linear algebras:

\begin{lem}\label{lem:bertini}
Let $n\ge 2$ be an integer. Let $k$ be a perfect field of positive characteristic $p>0$. Let $V_1,\dots,V_m\subset k[x_1,\dots,x_n]/k[x_1^p,\dots,x_n^p]$ be finite dimensional subspaces. Then there exists a polynomial $g=g(x_1,\dots,x_{n-1})$ so that the $k$-linear map
\begin{equation*}
\begin{aligned}
k[x_1,\dots,x_n]/k[x_1^p,\dots,x_n^p]&\to k[x_1,\dots,x_{n-1}]/k[x_1^p,\dots,x_{n-1}^p];\\
x_i\quad&\mapsto \quad 
\begin{cases}
\quad\quad x_i, & 1\le i\le n-1,\\
g(x_1,\dots,x_{n-1}), & i=n
\end{cases}
\end{aligned}
\end{equation*}
maps all the subspaces $V_i$ injectively into $k[x_1,\dots,x_{n-1}]/k[x_1^p,\dots,x_{n-1}^p]$.
\end{lem}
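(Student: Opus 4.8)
The statement is purely combinatorial, and the plan is to reduce it to a single monomial span and then exhibit $g$ explicitly. Write $Q_N\Def k[x_1,\dots,x_N]/k[x_1^p,\dots,x_N^p]$. Since $k[x_1^p,\dots,x_N^p]$ is precisely the $k$-span of the monomials whose exponent vector lies in $p\Z_{\ge0}^N$, the classes $\overline{x^\alpha}$ with $\alpha\notin p\Z_{\ge0}^N$ form a $k$-basis of $Q_N$. Because each $V_i$ is finite-dimensional, there is a finite set $M$ of such exponents $\alpha\in\Z_{\ge0}^n$ with $V_i\subseteq W\Def\langle\,\overline{x^\alpha}\mid\alpha\in M\,\rangle_k$ for every $i$; so it suffices to find $g$ for which the induced $k$-linear map $\overline{\phi}_g\colon Q_n\to Q_{n-1}$ restricts to an injection on $W$. (Here $\phi_g\colon k[x_1,\dots,x_n]\to k[x_1,\dots,x_{n-1}]$ is the $k$-algebra map fixing $x_1,\dots,x_{n-1}$ and sending $x_n\mapsto g$; it descends to $\overline{\phi}_g$ because $\phi_g(x_n^p)=g^p\in k[x_1^p,\dots,x_{n-1}^p]$.)

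The key step is to take $g=x_1^{D}+x_1^{D'}$ with $D<D'$ such that $D>\alpha_1$ for every $\alpha\in M$, while $D'-D$ exceeds $D\cdot\max_{\alpha\in M}\alpha_n+\max_{\alpha\in M}\alpha_1$ and is prime to $p$. By the binomial theorem and Lucas' congruence, $g^{\alpha_n}=\sum_{j}c_j\,x_1^{\alpha_nD+j(D'-D)}$, where the sum is over those $j$ whose base-$p$ digits are dominated by those of $\alpha_n$ and all $c_j\in\F_p^{\times}$. Hence $\overline{\phi}_g(\overline{x^\alpha})$ is a combination, with nonzero coefficients, of the monomial classes $\overline{x_1^{\alpha_1+\alpha_nD+j(D'-D)}x_2^{\alpha_2}\cdots x_{n-1}^{\alpha_{n-1}}}$. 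The size conditions on $D,D'$ make these monomials pairwise distinct as $\alpha$ runs over $M$ — the exponent of $x_1$ recovers $j$, then $\alpha_n$, then $\alpha_1$, and the remaining exponents are $\alpha_2,\dots,\alpha_{n-1}$ — so the vectors $\overline{\phi}_g(\overline{x^\alpha})$, $\alpha\in M$, have pairwise disjoint supports in the monomial basis of $Q_{n-1}$. They are therefore linearly independent as soon as each of them is nonzero, i.e. as soon as at least one of the monomials above survives in $Q_{n-1}$.

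This last point is the only place requiring a short case analysis, and it uses $\alpha\in M$ (not all $\alpha_i$ divisible by $p$) together with $D\not\equiv D'\pmod p$. If some $\alpha_i$ with $2\le i\le n-1$ is prime to $p$, every monomial occurring in $\overline{\phi}_g(\overline{x^\alpha})$ already has a non-$p$-divisible exponent. Otherwise $\alpha_2,\dots,\alpha_{n-1}$ are all divisible by $p$ and $p\nmid\alpha_1$ or $p\nmid\alpha_n$, and then the $j=0$ and $j=\alpha_n$ terms have $x_1$-exponents $\alpha_1+\alpha_nD$ and $\alpha_1+\alpha_nD'$, which cannot both be divisible by $p$: if $p\mid\alpha_n$ the first is $\equiv\alpha_1\not\equiv0$, and if $p\nmid\alpha_n$ then $D\equiv-\alpha_1/\alpha_n$ and $D'\equiv-\alpha_1/\alpha_n\pmod p$ are incompatible. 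Thus $\overline{\phi}_g|_W$ is injective, hence so is $\overline{\phi}_g|_{V_i}$ for every $i$. The obstacle I expect is purely bookkeeping: pinning down $(D,D')$ so that the disjoint-support estimate and the non-vanishing dichotomy hold at once. A softer alternative would be to note that "$\overline{\phi}_g|_W$ injective" is a nonempty Zariski-open condition on the coefficients of a polynomial $g$ of fixed large degree (nonemptiness witnessed by the explicit $g$ above), whence over an infinite field a general $g$ works as well.
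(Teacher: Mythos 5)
Your proof is correct, and it follows the same overall strategy as the paper -- reduce all the $V_i$ to a single span of monomial classes and then exhibit an explicit $g$ with two widely separated exponents so that the image of each basis monomial can be recovered from its exponent data -- but the concrete construction and the verification mechanism differ. The paper takes $g=\prod_{i=1}^{n-1}(x_i^{p^N}+x_i^{M})$ with $M>d$, $p\nmid M$, $p^N>d(M+1)$, and argues by singling out, for each basis monomial $x_1^{m_1}\cdots x_n^{m_n}$, one distinguished surviving monomial in its image ($\prod_i x_i^{m_i}$, $\prod_i x_i^{m_i+m_nM}$ or $\prod_i x_i^{m_i+m_np^N}$ according to the divisibility pattern of the $m_i$) and checking that these distinguished monomials are linearly independent in the quotient. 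Your $g=x_1^{D}+x_1^{D'}$ involves only the variable $x_1$, and your inequalities $D>\max\alpha_1$ and $D'-D>D\max\alpha_n+\max\alpha_1$ yield the stronger statement that images of distinct basis monomials have pairwise \emph{disjoint} monomial supports, so injectivity reduces to nonvanishing of each single image; your extra hypothesis $p\nmid D'-D$ settles that through the $j=0$ and $j=\alpha_n$ terms, playing the role that $p\nmid M$ and the exponent $p^N$ play in the paper. The bookkeeping you supply is, if anything, more complete than the paper's terse independence claim, and pinning down $(D,D')$ is no real obstacle (take $D=1+\max\alpha_1$ and let $D'-D$ be whichever of $D\max\alpha_n+\max\alpha_1+1$ or $D\max\alpha_n+\max\alpha_1+2$ is prime to $p$). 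The only caveat concerns your closing remark: a perfect field may be finite, so the ``generic $g$'' variant genuinely needs the infinite-field hypothesis you state, whereas the explicit construction works over any perfect $k$.
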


\begin{proof}
By considerling $V:=V_1+\dots+V_m$, we are reduced to the case where $m=1$. Let $V$ be a finite dimensional subspace of $k[x_1,\dots,x_n]/k[x_1^p,\dots,x_n^p]$. Without loss of generality, we may assume that $V$ is of the form
\begin{equation*}
V=\langle x_1^{m_1}\cdots x_n^{m_n}\,|\,0\le m_i\le d;\, p\nmid m_i~\text{for some $i$}\rangle
\end{equation*}
for a sufficiently large integer $d>0$. Take an integer $M>d$ with $p\nmid M$ and an integer $N>0$ with $p^N>d(M+1)$. Let us define
\begin{equation*}
g\Def\prod_{i=1}^{n-1}(x_i^{p^N}+x_i^M).
\end{equation*}
Note that the following subset of $k[x_1,\dots,x_{n-1}]/k[x_1^p,\dots,x_{n-1}^p]$ is linearly independent over $k$:
\begin{equation*}
\begin{aligned}
&x_1^{m_1}\cdots x_{n-1}^{m_{n-1}}~&&(0\le m_i\le d, m_n=0;~p\nmid m_i~\text{for some $1\le i\le n-1$}),\\
& \prod_{i=1}^{n-1}x_i^{m_i+m_nM}~&&(p\mid m_i~\text{for any $1\le i\le n-1$, whence $p\nmid m_n$}),\\
&\prod_{i=1}^{n-1}x_i^{m_i+m_np^N}&&
(0\le m_i\le d;~m_n\neq 0;~p\nmid  m_i~\text{for some $1\le i\le n-1$}).
\end{aligned}
\end{equation*}
One can then conclude that the polynomial $g$ gives a desired polynomial. 
\end{proof}

\begin{proof}[Proof of Theorem \ref{thm:bertini}]
We will show that there exists a closed immersion $\iota:\A^{n-1}_k\hookrightarrow\A^n_k$ such that
\begin{equation*}
\Dim_k\Hom_{\Str(\A^{n-1}_k,1)}(\unit,(\iota^*P)^{\str})=1.
\end{equation*}
Note that $(\iota^*P)^{\str}=\iota^*(P^{\str})$ and that, from the assumption, we have
\begin{equation}\label{eq:bertini assump}
\Dim_k\Hom_{\Str(\A^n_k,1)}(\unit,P^{\str})=1.
\end{equation}
Let $P^{\str}=\{E^{(i)},\sigma^{(i)}\}$.
By Serre's conjecture on vector bundles on an affine space~(cf.~\cite{la78}), the vector bundle $E^{(0)}$ is a free $\calO_{\A^n_k}$-module. Let
\begin{equation*}
A\in\GL_{q}(k[x_1^{(-1)},\dots,x_n^{(-1)}])
\end{equation*}
with $q=\Dim_{k}k[G]$, which is some  power of $p$, be the representation matrix of $\sigma^{(0)-1}$ with respect to some basis. The assumption (\ref{eq:bertini assump}) then amounts to saying that the vector space
\begin{equation*}
\Hom_{\Str(\A_k^n,1)}(\unit, P^{\str})\simeq\{\underline{a}\in k^{\oplus q}\,|\,A\cdot\underline{a}\in k[x_1,\dots,x_n]^{\oplus q}\}
\end{equation*}
is of dimension one. Then, by permuting basis if necessary, we may assume that
\begin{equation*}
\{\underline{a}\in k^{\oplus q}\,|\,A\cdot\underline{a}\in k[x_1,\dots,x_n]^{\oplus q}\}
=\biggl\{
\begin{pmatrix}
a\\
0\\
\vdots\\
0
\end{pmatrix}
\biggl |a\in k\biggl\}.
\end{equation*}
Then, the matrix $A$ has the following form:
\begin{equation*}
A=
\begin{pmatrix}
    a_{11} &  *   &   \cdots   & *    \\
     a_{21}    & * &  \cdots     & *    \\
    \vdots &     & \ddots &     \\
     a_{q1}  &   *  &  \cdots      & *
\end{pmatrix}
\quad\text{with $a_{i1}\in k[x_1,\dots,x_n]~(1\le i\le q)$}.
\end{equation*}
Let us write
\begin{equation*}
B\Def
\begin{pmatrix}
a_{12}&\cdots &a_{1q}\\
&\cdots &\\
a_{q2}&\cdots &a_{qq}
\end{pmatrix}.
\end{equation*}
Then the condition that $\Dim_k\Hom_{\Str(\A_k^1,1)}(\unit,P^{\str})=1$ is equivalent to the condition that
\begin{equation*}
B\cdot\underline{b}\not\in k[x_1,\dots,x_n]^{\oplus q}~(0\neq \underline{b}\in k^{\oplus q-1}).
\end{equation*}
Therefore, we are reduced to showing the following statement: if a matrix $B=(b_{ij})\in {\rm Matrix}_{s,t}(k[x_1^{(-1)},\dots,x^{(-1)}_n])$ satisfies the condition 
\begin{equation*}
B\cdot\underline{b}\not\in k[x_1,\dots,x_n]^{\oplus t}~(0\neq\underline{b}\in k^{\oplus s}),
\end{equation*}
then there exists a polynomial $g=g(x_1^{(-1)},\dots,x^{(-1)}_{n-1})$ such that the condition
\begin{equation*}
B'\cdot\underline{b}\not\in k[x_1,\dots,x_{n-1}]^{\oplus t}~(0\neq\underline{b}\in k^{\oplus s}),
\end{equation*}
is fulfilled. Here $B'=(b'_{ij})$ is a matrix with $b'_{ij}=b_{ij}(x_1^{(-1)},\dots,x_{n-1}^{(-1)},g)$. Indeed, by applying Lemma \ref{lem:bertini} to the subspaces
\begin{equation*}
V_i\Def\langle\overline{b_{i1}},\dots,\overline{b_{is}}\rangle\subset k[x_1^{(-1)},\dots,x_n^{(-1)}]/k[x_{1},\dots,x_n]~(1\le i\le t),
\end{equation*}
we can find a polynomial $g=g(x_1^{(-1)},\dots,x_{n-1}^{(-1)})$ so that the homomorphism
\begin{equation*}
k[x_1^{(-1)},\dots,x_n^{(-1)}]/k[x_1,\dots,x_n]\to k[x_1^{(-1)},\dots,x_{n-1}^{(-1)}]/k[x_1,\dots,x_{n-1}]
\end{equation*}
induced by $x_n^{(-1)}\mapsto g(x_1^{(-1)},\dots,x_{n-1}^{(-1)})$ maps all the subspaces $V_i$ injectively into
\begin{equation*}
k[x_1^{(-1)},\dots,x_{n-1}^{(-1)}]/k[x_1,\dots,x_{n-1}].
\end{equation*}
Then for any $\underline{b}\in k^{\oplus s}$, the condition that $B'\cdot\underline{b}\in k[x_1,\dots,x_{n-1}]^{\oplus t}$ implies the condition that $B\cdot\underline{b}\in k[x_1,\dots,x_{n}]^{\oplus t}$. This completes the proof.
\end{proof}

\begin{cor}\label{cor:bertini}
Let $k$ be a perfect field of positive characteristic $p>0$. Let $\Sigma$ be a semi-simple simply connected algebraic group over $k$. Then there exists a surjective homomorphism
\begin{equation*}
\pi^{\loc}(\A^1_k)\twoheadrightarrow
\varprojlim_{r>0}\Sigma_{(r)}.
\end{equation*}
In particular, for any $r>0$, the $r$-th Frobenius kernel $\Sigma_{(r)}$ of $\Sigma$ appears as a finite quotient of $\pi^{\loc}(\A_k^1)$.
\end{cor}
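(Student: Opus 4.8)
The plan is to reduce everything to producing a single $k$-morphism $f\colon\A^1_k\to\Sigma$ such that the height-one $\Sigma_{(1)}$-torsor $f^*\Sigma^{(-1)}\to\A^1_k$ is saturated. Granting this, Lemma~\ref{lem:any ht} makes $f^*\Sigma^{(-r)}$ saturated for every $r>0$, so the corresponding homomorphisms $\pi^{\loc}(\A^1_k)\twoheadrightarrow\Sigma_{(r)}$ are surjective and compatible with the transition maps $\Sigma_{(r+1)}\twoheadrightarrow\Sigma_{(r)}$; since $\pi^{\loc}(\A^1_k)$ is profinite and each $\Sigma_{(r)}$ is finite, the image of $\pi^{\loc}(\A^1_k)\to\varprojlim_{r>0}\Sigma_{(r)}$ is a closed subgroup surjecting onto every $\Sigma_{(r)}$, hence equals the whole group. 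Thus the statement reduces to the case $r=1$.

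For $r=1$, the input is that the relative Frobenius morphism $F^{(1)}\colon\Sigma^{(-1)}\to\Sigma$ is a saturated $\Sigma_{(1)}$-torsor over $\Sigma$ (recalled in Section~\ref{sec:main-prob}) and that $\Sigma_{(1)}$ is finite local of height one, so Theorem~\ref{thm:bertini} is of exactly the shape we want --- except that it is formulated for torsors over $\A^n_k$, whereas ours lives over $\Sigma$. To bridge this I would use that a semisimple simply connected group is a rational variety: its big cell $U^-TU^+$ is isomorphic to $\A^{2d}\times T$ with $T$ a split torus $\cong\G_m^{l}$, so there is a dense open $\Omega\subseteq\Sigma$ identified with a dense open subset $\A^{2d}\times\G_m^{l}$ of $\A^{N}$, $N=\dim\Sigma$. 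One checks (either because the complement of $\Omega$ can be arranged suitably, or by a direct application of the Tannakian criterion of Lemma~\ref{lem:criterion}) that the restriction $F^{(1)}|_{\Omega}$ is still a saturated $\Sigma_{(1)}$-torsor; and one may take $k$ algebraically closed throughout by a descent argument, since $F^{(1)}$ and a suitable $f$ are defined over a finite extension of the prime field.

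Then I would run (the proof of) Theorem~\ref{thm:bertini}. Its engine, Lemma~\ref{lem:bertini}, only substitutes a polynomial for one coordinate of a polynomial ring and touches solely the affine-space directions, so it adapts to $\Omega\cong\A^{2d}\times\G_m^{l}$: cut the $\A^{2d}$-factor down one dimension at a time, and then restrict the $\G_m^{l}$-factor along a sufficiently generic monomial one-parameter subgroup $\G_m\hookrightarrow\G_m^{l}$. After $N-1$ steps one is left with a closed curve $C\subseteq\Omega\subseteq\Sigma$ --- which can be arranged to be isomorphic to $\A^1_k$, e.g.\ of the form $\ell(\A^1_k)\times\{c\}$ for a linear embedding $\ell\colon\A^1_k\hookrightarrow U^-\times U^+$ and a generic point $c\in T$ --- along which the torsor stays saturated. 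Composing $C\hookrightarrow\Omega\hookrightarrow\Sigma$ gives the desired $f$, hence the case $r=1$, hence the corollary.

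The hard part is precisely this transfer: Theorem~\ref{thm:bertini} is about $\A^n_k$, while the Frobenius torsor only naturally exists over $\Sigma$ (or the quasi-affine $\Omega$), so one must both redo the cutting of Lemma~\ref{lem:bertini} in the presence of the torus directions and ensure that the monomial one-parameter subgroup and the linear slice are generic enough that saturation survives passage to a one-dimensional slice --- equivalently, that the homomorphism $\pi^{\loc}(C)\to\Sigma_{(1)}$ still hits the unipotent subgroup schemes $U^{\pm}_{(1)}$, which by Remark~\ref{rem:purely insep}(4) generate $\Sigma_{(1)}$. The explicit bookkeeping in the proof of Theorem~\ref{thm:bertini} --- tracking the representation matrix of $\sigma^{(0)-1}$ and the finite list of subspaces it controls --- is what should make this genericity tractable.
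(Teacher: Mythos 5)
Your reduction to the case $r=1$ via Lemma \ref{lem:any ht} and the passage to the inverse limit are fine and match the paper. The problem is the core of your argument: you propose to restrict the Frobenius torsor to the big cell $\Omega\simeq U^-\times T\times U^+\subset\Sigma$ and then rerun Theorem \ref{thm:bertini} over $\A^{2d}\times\G_m^{l}$, cutting the affine directions and then slicing the torus factor along a ``sufficiently generic'' monomial one-parameter subgroup or a generic point $c\in T$. But Theorem \ref{thm:bertini} and its engine Lemma \ref{lem:bertini} are proved only for $\A^n_k$, i.e.\ for the quotient $k[x_1,\dots,x_n]/k[x_1^p,\dots,x_n^p]$, and the extension to Laurent polynomial rings together with the genericity statement for the torus slice is exactly what you leave unproved (``the hard part \dots should make this genericity tractable''). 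Since the whole corollary hinges on producing one saturated height-one torsor over a curve isomorphic to $\A^1_k$, this is a genuine gap, not a routine verification: nothing in the paper, and nothing in your sketch, shows that a slice of the form $\ell(\A^1_k)\times\{c\}$ keeps the torsor saturated, nor that the needed mixed Bertini lemma holds. (Your subsidiary claim that saturation survives restriction to the dense open $\Omega$ is true but also needs an argument, e.g.\ extending a reduction-of-structure-group section across a finite morphism over a normal base; and the descent remark about $k$ versus $\overline{k}$ is left vague.)

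The paper avoids all of this by never working over $\Sigma$ or its big cell. It takes the multiplication morphism $f:U^{+}\times U^{-}\to\Sigma$ (just a morphism from genuine affine space, not an open immersion) and pulls back the Frobenius torsor to $P=f^*\Sigma^{(-1)}$ over $U^{+}\times U^{-}\simeq\A^N_k$. Saturation of $P$ is then checked directly: restricting along $i_{\pm}:U^{\pm}\hookrightarrow U^{+}\times U^{-}$ gives the torsor induced from the relative Frobenius $U^{\pm(-1)}\to U^{\pm}$, so the image of $\pi^{\loc}(U^{+}\times U^{-})\to\Sigma_{(1)}$ contains both $U^{+}_{(1)}$ and $U^{-}_{(1)}$, and $\langle U^{\pm}_{(1)}\rangle=\Sigma_{(1)}$ by Remark \ref{rem:purely insep}(4). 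Now Theorem \ref{thm:bertini} applies verbatim over $\A^N_k$ to produce a closed immersion $\iota:\A^1_k\hookrightarrow U^{+}\times U^{-}$ with $\iota^*P$ saturated, and $f\circ\iota$ is the desired morphism. If you want to salvage your route, you would have to actually prove the mixed affine/torus Bertini statement; the simpler fix is to replace the big cell by the multiplication map as above, which makes the group-theoretic step (hitting $U^{\pm}_{(1)}$) trivial and keeps the Bertini input exactly in the form already proved.
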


\begin{proof}
We will adopt the argument of \cite[Section 3.2]{se92}.
Let us find a $k$-morphism $\A^1_k\to\Sigma$ so that the resulting tower of torsors
\begin{equation*}
\dots\to \Sigma^{(-r)}|_{\A^1_k}\to\dots\to \Sigma^{(-1)}|_{\A^1_k}\to \A^1_k
\end{equation*}
is saturated.
By the virtue of Lemma \ref{lem:any ht}, it suffices to find a $k$-morphism $\A^1_k\to\Sigma$ so that $\Sigma^{(-1)}|_{\A^1_k}$ is saturated.
Fix a maximal torus $T<\Sigma$ and a set of positive roots. Let $U^{+}<\Sigma$ (resp. $U^-<\Sigma$) be the unipotent radical corresponding to the positive roots (resp. the negative roots). Let us define a $k$-morphism $f:U^{+}\times U^{-}\to\Sigma$ by the composition
\begin{equation*}
U^{+}\times U^{-}\hookrightarrow\Sigma\times\Sigma\xrightarrow{m}\Sigma.
\end{equation*}
Here the first map is the natural inclusion and the second one the multiplication of $\Sigma$. Then the pulling-back $P\Def f^*\Sigma$ defines a saturated $\Sigma_{(1)}$-torsor over $U^{+}\times U^{-}$. Indeed, notice that the diagram
\begin{equation*}
\begin{xy}
\xymatrix{
U^{\pm (-1)}\ar[d]_{F^{(1)}}\ar @{^{(}-{>}} [rr] && \Sigma^{(-1)}\ar[d]^{F^{(1)}}\\
U^{\pm}\ar @{^{(}-{>}} [r]^{i_{\pm}\quad}&U^{+}\times U^{-}\ar[r]^{f}&\Sigma
}
\end{xy}
\end{equation*}
commutes. Then the $\Sigma_{(1)}$-torsor $i_{\pm}^*P\to U^{\pm}$ is reduced to the saturated $U^{\pm}_{(1)}$-torsor $U^{\pm (-1)}\to U^{\pm}$, i.e., 
\begin{equation*}
i_{\pm}^*P\simeq {\rm Ind}^{\Sigma_{(1)}}_{U^{\pm}_{(1)}}(U^{\pm (-1)})\Def U^{\pm (-1)}\times^{U^{\pm}_{(1)}}\Sigma_{(1)}.
\end{equation*}
If we denote by $\phi:\pi^{\loc}(U^{+}\times U^{-})\to \Sigma_{(1)}$ (resp. $\psi^{\pm}:\pi^{\loc}(U^{\pm})\to U^{\pm}_{(1)}$) the homomorphism corresponding to the torsor $P$ (resp. $i_{\pm}^*P$), this amounts to saying that the diagram homomorphism
\begin{equation*}
\begin{xy}
\xymatrix{
\pi^{\loc}(U^{\pm})\ar[r]
\ar @{-{>>}} [d]_{\psi_{\pm}}&\pi^{\loc}(U^{+}\times U^{-})\ar[d]^{\phi}\\
U^{\pm}_{(1)}\ar @{^{(}-{>}} [r]&\Sigma_{(1)}
}
\end{xy}
\end{equation*}
commutes. Therefore, ${\rm Im}(\phi)\supset U^{\pm}_{(1)}$, whence ${\rm Im}(\phi)=\langle U^{\pm}_{(1)}\rangle=\Sigma_{(1)}$, where, for the last equality, see Remark \ref{rem:purely insep}(4).  Therefore, $P$ is a saturated $\Sigma_{(1)}$-torsor over $U^{+}\times U^{-}\simeq\A^N_k$ for some $N>1$. Then by applying Theorem \ref{thm:bertini}, we can conclude that there exists a closed immersion $\iota:\A^1_k\hookrightarrow U^+\times U^-$ so that $\iota^*P\to\A^1_k$ is saturated as well. Therefore, the $k$-morphism $f\circ\iota:\A^1_k\to\Sigma$ gives a desired one. This completes the proof.
\end{proof}


\small

\end{document}